\renewcommand{\tocsection}[3]{%
	\indentlabel{\@ifnotempty{#2}{\bfseries\ignorespaces#1 #2.\,\,}}\bfseries#3}
\renewcommand{\tocsubsection}[3]{%
	\indentlabel{\@ifnotempty{#2}{\ignorespaces#1 #2\quad}}#3}
\renewcommand{\tocsubsubsection}[3]{%
	\quad\quad\quad\indentlabel{\@ifnotempty{#2}{\ignorespaces#1 #2\quad}}#3}
\def\1{\mathbf{1}}
\newtheorem{theorem}{Theorem}[section]
\newtheorem{proposition}[theorem]{Proposition}
\newtheorem*{proposition*}{Proposition}
\newtheorem{corollary}[theorem]{Corollary}
\newtheorem*{theorem*}{Theorem}
\newtheorem{lemma}[theorem]{Lemma}
\theoremstyle{remark}
\newtheorem{remark}[theorem]{Remark}
\newtheorem{example}[theorem]{Example}
\theoremstyle{definition}
\newtheorem{definition}[theorem]{Definition}
\numberwithin{equation}{section}
\definecolor{amber}{rgb}{1.0, 0.49, 0.0}
\definecolor{ao}{rgb}{0.0, 0.5, 0.0}
\newcommand{\Varphi}{{
\varphi}}
\renewcommand{\Psi}{{
\psi}}
\newcommand{\OmegA}{{
\omega}}
\newcommand{\cck}{\kappa^{\,\OmegA\,|\,\Varphi,\,\Psi}}
\newcommand{\nc}{{\rm NC}}
\newcommand{\cync}{{\rm CyNC}}
\title{Combinatorics of cyclic conditional freeness}
\author{Octavio Arizmendi${}^{\dagger}$}
\thanks{O.A. gratefully acknowledges financial support by the grants Conacyt A1-S-9764 and SFB TRR 195.}
\address{${}^{\dagger}$ Centro de Investigación en Matemáticas (CIMAT), A.C., Jalisco S/N, Col. Valenciana CP: 36023 Guanajuato, Gto, México}
\email{octavius@cimat.mx}
\author[G. C\'ebron]{Guillaume C\'ebron${}^{\star}$}
\thanks{G.C. is supported by the Project MESA (ANR-18-CE40-006) and by the Project STARS (ANR-20-CE40-0008) of the French National Research Agency (ANR)}
\author[N. Gilliers]{Nicolas Gilliers${}^{\star, \diamond}$}\thanks{N.G. is supported by the Project STARS (ANR-20-CE40-0008) of the French National Research Agency (ANR)}
\address{${}^{\star}$ Institut de Mathématiques de Toulouse; UMR5219; Université de Toulouse; CNRS; UPS, F-31062 Toulouse, France}
\email{guillaume.cebron@math.univ-toulouse.fr, nicolas.gilliers@math.univ-toulouse.fr}
\address{${}^{\diamond}$  Department of Mathematics, New York University Abu Dhabi, Saadiyat Marina District, Abu Dhabi, United Arab Emirates}
\email{nag9000@nyu.edu}
\date{\today}
\keywords{Non commutative-probability, conditional freeness, Markov-Krein transform, cumulants, graphs}
\subjclass[2020]{46L53, 46L54}
\begin{document}
\begin{abstract}
	This work investigates the combinatorial structures underlying cyclic conditional freeness and introduces cumulants that serve to linearize the cyclic conditional additive convolution. In the process, we establish the notion of "cyclic freeness," demonstrating its equivalence to infinitesimal freeness in the presence of tracial states. Furthermore, we show that cyclic conditional freeness can be reduced to cyclic freeness through a multivariate extension of the inverse Markov-Krein transform.
\end{abstract}
\maketitle
\tableofcontents

\section{Introduction}
Non-commutative probability (NCP) is an area of Operator Algebra Theory that approaches the subject from the point of view of probability, see \cite{speicher2020lecture}. In this sense, one considers bounded operators in a von Neumann algebra or unbounded operators affiliated to a von Neumann algebra acting on a pointed Hilbert space as random variables. One associates a distribution to these random variables depending on a linear functional (the state) on the algebra generated by these operators.

One of the essential concepts of probability is \emph{independence} since it allows the building of families of random variables with a natural and manageable joint distribution. Many theories, such as stochastic processes, particularly Markov processes, are based on independence.
A surprising feature of NCP is the possibility of \emph{several (abstract) independences} in opposition to classical probability where independence between classical random variables is somewhat unambiguous: among them one finds \emph{free independence}~\cite{voiculescu1986addition}, \emph{Boolean independence}~\cite{speicher1993boolean} and \emph{monotone independence}~\cite{muraki2001monotonic}.

NCP has provided fresh tools and a deeper understanding across various domains, including physics problems via Random Matrix Theory (RMT). RMT has broadly benefited from Voiculescu's free probability \cite{voiculescu1986addition} since many matrix ensembles in the high-dimensional limit display asymptotic freeness \cite{speicher2014free,voiculescu1991limit}. Beyond the original applications of NCP to classical physics problems such as energy spectra of heavy nuclei~\cite{zbMATH03113773} or 2D quantum gravity~\cite{di19952d}, it has also recently emerged within the context of quantum chaos, with all of its implications not yet fully elucidated. Let us mention the work of Hruza and Bernard~\cite{hruza2023coherent} about noisy many-body quantum systems and the work of Pappalardi, Foini and Kurchan~\cite{pappalardi2022eigenstate} about eigenstate thermalisation.

A critical tool from NCP which appears fundamental in these new developments is the use of \emph{cumulants}. In free probability, free cumulants were introduced by Speicher \cite{nica2006lectures} and have since become a fundamental tool for answering theoretical questions and deriving applications of free probability. For example, the essential role of cumulants from NCP in the two physics works cited is highlighted in the preprints~\cite {bernard2024structured} by Hruza and Bernard and~\cite {fava2023designs} by Fava, Kurchan, and Pappalardi. Let us also mention the recent use of the \emph{free cumulants} by Wang~\cite{Wang_2023} to resolve the spectrum of the radiation density matrix in the model of an evaporating black hole due to Penington-Shenker-Stanford-Yang.

Returning to the notion of non-commutative independence, a given collection of random variables can have \emph{several joint distributions}, depending on the functionals considered for computing expectations. These various non-commutative distributions can interact coherently. One then speaks of independence between \emph{tuples of non-commutative distributions}: two-state independences, three-state independences, and so on. Two typical examples of two-state independences relevant for the present work are the conditional freeness \cite{bozejko1996convolution,bozejko1998new} of Bożejko, Leinert, and Speicher, and the infinitesimal freeness \cite{fevrier2010infinitesimal} of Février and Nica.

More recently, other independences besides free independence have appeared in connection with the fine-grained limiting behavior of spectra of random matrices. Examples include the already mentioned infinitesimal freeness \cite{cebron2022freeness,curran2011asymptotic,shlyakhtenko2018free}.
\emph{Cyclic-monotone independence \cite{collins2018free}} was specifically designed to study random matrix models with purely discrete eigenvalues in the high-dimensional limit. In~\cite{arizmendi2022cyclic}, the first author, Hasebe and Lehner introduced an operatorial model to cyclic-monotone independence, which they leverage to introduce the new concept of cyclic-Boolean independence.
In \cite{cebron2022asymptotic}, the two last authors proposed a random matrix model, the \emph{Vortex Model}, displaying \emph{conditional freeness} in the high dimensional limit. They computed the fluctuations of the empirical spectral values distribution in this convergence with the help of the appropriate independence original to their work, which they named \emph{cyclic conditional freeness}, of prime interest for the present work.

Studying the different independences and providing the corresponding set of cumulants for each one is fundamental from a theoretical standpoint. This is also important for applications in physics. We tackle this in the present work: we define the cumulants adapted for the cyclic conditional freeness.
In defining these cumulants, we first reduce cyclic conditional freeness (an independence notion involving three linear functionals, or "states") to cyclic freeness (an independence notion involving two states).
Surprisingly, this reduction utilizes a multivariate extension of the celebrated (inverse) Markov-Krein transform. Cyclic freeness coincides with infinitesimal freeness when the two states are tracial linear functionals. We leverage this observation to build upon the intuition gained in the work \cite{fevrier2010infinitesimal} of Février and Nica about infinitesimally free cumulants, the cyclic free  cumulants.
Finally, we establish the basic properties of these cumulants, and we provide subordination relations between the appropriate transforms, thus providing an effective way of computing cyclic conditional additive and multiplicative convolutions.

Let us mention two additional motivations for the introduction of cyclic freeness. Firstly, this new independence extends the connection between conditional and infinitesimal freeness found by Février, Mastnak, Nica and Szpojankowski \cite{fevrier2019construction} to non-tracial states. Secondly, cyclic conditional freeness and cyclic freeness, cyclic-monotone and cyclic-Boolean independences share relations reminiscent of the ones holding between conditional freeness and the triptych of the monotone, Boolean and free independences \cite{franz2005multiplicative, bozejko1996convolution} (see Fig. \ref{fig:Figureone}).

\begin{figure}[!h]
	\begin{tikzcd}[ampersand replacement=\&]
		\& {\textrm{cyclic conditional freeness}} \\
		{\textrm{cyclic Boolean}} \& {\textrm{cyclic freeness}} \& {\textrm{cyclic monotone}} \\
		{\textrm{Boolean independence}} \& {\textrm{freeness}} \& {\textrm{monotone independence}} \\
		\& {\textrm{conditional freeness}}
		\arrow[from=1-2, to=2-2]s
		\arrow[from=1-2, to=2-3]
		\arrow[from=1-2, to=2-1]
		\arrow[color = blue, from=2-3, to=3-3]
		\arrow[color=red, bend right = 60, from=3-3, to=2-3]
		\arrow[color = blue, from=2-1, to=3-1]
		\arrow[color = red, bend right = 60, from=3-1, to=2-1]
		\arrow[from=4-2, to=3-1]
		\arrow[from=4-2, to=3-3]
		\arrow[from=4-2, to=3-2]
		\arrow[color = blue, bend right=60,from=1-2, to=4-2]
		\arrow[color = blue, from=2-2, to=3-2]
		\arrow[color=red, bend right = 60, from=3-2, to=2-2]
		\arrow[color=red, bend right = 60, from=4-2, to=1-2]
		\arrow[color=gray, bend left = 50, from=4-2, to=2-2]
	\end{tikzcd}
	\caption{Each {\color{red}red} arrows means that we use the multivariate Markov-Krein transform (we call those arrows cyclic companions in this text). Each black arrow means the target independence is obtained under further assumptions on the linear functionals and the independence at the source. The {\color{gray} grey} arrow represents an extension of a result by Février, Mastnak, Nica and Szpojankowski. The {\color{blue} blue} arrows proceed from considerations as in Proposition 2.16 in \cite{cebron2022freeness} stating the connection between the monotone and cyclic monotone independences.
		\label{fig:Figureone}}
\end{figure}
\subsection{Main Contributions}
We introduce in the present work a new non-commutative independence which we call \emph{cyclic freeness} between random variables in a non-commutative probability space endowed with two, possibly non-unital, linear functionals of which one is tracial. We compare cyclic freeness with other notions of independence, such as the infinitesimal freeness. The definition goes as follows.

Recall that a tracial linear functional $\OmegA\colon\mathcal{A}\to\mathbb{C}$ over an algebra $\mathcal{A}$ satisfies the property $\OmegA(ab) = \OmegA(ba)$ for all $a,b\in\mathcal{A}$.
In what follows, a non-commutative probability space will denote a pair of an algebra $\mathcal{A}$ and a linear functional $\Psi:\mathcal{A}\to\mathbb{C}$.
Besides, the symbol $\Varphi$ and $\Psi$ will always be used to denote a unital linear functional on $\mathcal{A}$ and we will use the symbol $\OmegA$ for a tracial, possibly non-unital, linear functional.

\begin{definition}[Cyclic freeness] \label{def:cyclicfreeness}
	Two unital subalgebras $\mathcal{A}_1 \subset \mathcal{A}$ and $\mathcal{A}_2 \subset\mathcal{A}$ are called \emph{cyclically free} relatively to the pair $(\Psi,\OmegA)$ whenever they are free with respect to $\Psi$ and, for any family $a_1,\ldots,a_n$ of variables in $\mathcal{A}_1\cup \mathcal{A}_2$, $a_i \in \mathcal{A}_{j_i}$, \emph{cyclically alternating}  ($n\geq 2$), and \emph{centered} with respect to $\Psi$:
	$$
		j_{i}\neq j_{i+1},\text{ for all } i=1,\ldots,n-1\, j_{1} \neq j_{n}
	$$
	and
	$$
		\Psi(a_i) = 0 \text{ for all } i=1,\ldots,n,
	$$
	it holds that
	$$
		\OmegA(a_1\dots a_n) =0.
	$$
\end{definition}
The above definition admits the following equivalent characterization: the two subalgebras $\mathcal{A}_1$ and $\mathcal{A}_2$ are cyclically free if for any family $a_1,\ldots,a_n$, $n\geq 1$ of $\mathcal{A}_1\cup \mathcal{A}_2$ which is \emph{alternating} (not necessarily cyclically alternating as in Definition \ref{def:cyclicfreeness}, we only impose $j_i\neq j_{i+1}$, $i=1,\ldots,n-1$) and centered with respect to the linear functional $\Psi$,
\begin{align}
	\Psi(a_1\dots a_n)   & = 0,   \label{def:freenessun}                                                                                                   \\
	\OmegA(a_1\dots a_n) & = \sum_{i=1}^n \Psi(a_{i+1}\dots a_n \OmegA(a_i) a_1\dots a_{i-1} ) \label{def:cyclicfreenesscharacterizationdeuxequationdeux}.
\end{align}
By inserting the moments conditions characterizing freeness of $\mathcal{A}_1$ and $\mathcal{A}_2$ \cite[Lemma 5.8]{nica2006lectures} in the second equation \eqref{def:cyclicfreenesscharacterizationdeuxequationdeux}, cyclic freeness is equivalent to, under the same conditions prevalent to equations \eqref{def:cyclicfreenesscharacterizationdeuxequationdeux} and \eqref{def:freenessun},
\begin{align*}
	 & \Psi(a_1\dots a_n)=0, \\
	 & \OmegA(a_1\dots a_n)=
	\left\{
	\begin{array}{ll}
		\Psi(a_na_1) \dots \Psi(a_{\frac{n+3}{2}}a_{\frac{n-1}{2}}) \OmegA(a_{\frac{n+1}{2}}) & n\text{ is odd and }j_1=j_{n}, \ldots,j_{\frac{n-1}{2}}=j_{\frac{n+3}{2}}, \\
		0                                                                                     & {\rm otherwise.}
	\end{array}
	\right.
\end{align*}
Notice the inversions of the indexes in the products $a_na_1,a_{n-1}a_2,\ldots$ in comparison to the characterization of infinitesimal freeness given in \cite[Definition 1.1]{fevrier2010infinitesimal}.

\begin{example} Let $a \in \mathcal{A}_1$, $b\in \mathcal{A}_2$, $\mathcal{A}_1$ with $\mathcal{A}_2$ two cyclically free subalgebras,
	$$
		\OmegA(aba)=\OmegA(ba^{2}) = -\OmegA (1_{\mathcal{A}}) \Psi(b) \Psi(a^2) + \OmegA(b)\Psi(a^2) + \Psi(b)\OmegA(a^{2}).
	$$
\end{example}
\begin{remark}
	\label{rk:cyfequalif}
	If $\Psi$ is tracial and $\OmegA(1_{\mathcal{A}})=0$, the above moments conditions for cyclic freeness are equivalent to the moments conditions of infinitesimal freeness, again see \cite{fevrier2010infinitesimal}. For the coincidence between infinitesimal freeness and cyclic freeness to hold, it is sufficient to require that the restrictions of $\Psi$ to each subalgebra $\mathcal{A}_1$ and $\mathcal{A}_2$ are tracial: in that case, too, cyclic freeness between $\mathcal{A}_1$ and $\mathcal{A}_2$ is equivalent to infinitesimal freeness. This holds in particular when $\mathcal{A}_1$ and $\mathcal{A}_2$ are, each, generated by one variable.
\end{remark}

We refer the reader to \cite{muraki2001monotonic, muraki2003thefive, speicher1993boolean, speicher2020lecture} for an account on the five natural independences, which we refer to as the Muraki's five : Voiculescu's freeness, boolean independence, (anti)-monotone independence and finally the classical stochastic independence.

We denote by $T(\mathcal{A})$ the tensor algebra over $\mathcal{A}$,
$$
	T(\mathcal{A})=\mathbb{C}1\oplus \bigoplus_{n \geq 1}\mathcal{A}^{\otimes n},
$$
and by $\bar{T}(\mathcal{A})$ the augmentation ideal of $T(\mathcal{A})$, given by
$
	\bar{T}(\mathcal{A})=\bigoplus_{n \geq 1}\mathcal{A}^{\otimes n}.
$
Given a subset $S\subset \mathcal{A}$, we denote by $T(S)$ (resp. $\bar{T}(S)$) the algebra generated by $S$ in $T(\mathcal{A})$ (resp. the intersection of $T(S)$ with the augmentation ideal $\bar{T}(\mathcal{A})$).
Recall that a linear functional $f\colon \mathcal{A}\to\mathcal{\mathbb{C}}$ yields another linear functional on $T(\mathcal{A})$ (we use the same symbol for the two),
$$
	f(a_1 \otimes \dots \otimes a_n)=f(a_1\cdots a_n),\quad f(1)=f(1_{\mathcal{A}}),\quad a_1,\ldots,a_n \in \mathcal{A}.
$$
(we take the product in $\mathcal{A}$ of the $a_i$'s on the right-hand side).


Given a linear function $\Psi\colon\mathcal{A}\to \mathbb{C}$, we define a new linear functional $[\Psi]\colon T(\mathcal{A})\to \mathbb{C}$ over the tensor algebra $\mathcal{A}$ as follows:
\begin{align}
	\label{eqn:definsoulcy}
	 & [\Psi](a_1,\dots, a_n)=\sum_{\substack{1\leq \ell \leq n \\ 1\leq i_1< \dots < i_{\ell} \leq n}} \prod_{k=1}^{\ell}\beta^{\Psi}(a_{i_k},\dots, a_{i_{k+1}-1}),
	 & [\Psi](1)=1.
\end{align}
where the $\beta^{\Psi}$ are the \emph{Boolean cumulants} associated to $\Psi$ and where we use the cyclic convention $i_{\ell+1}=i_1$. In Section \ref{sec:cyclicBoolean}, we interpret the formula \eqref{eqn:definsoulcy} as a \emph{cyclic Boolean moment-cumulant formula}, prescribing the cyclic Boolean cumulants of $[\Psi]$ as being equal to the cyclic averaging of the ordinary Boolean cumulants of $\Psi$.
For example, with $a_1,a_2,a_3 \in \mathcal{A}$:
\begin{align*}
	[\Varphi](a_1,a_2,a_3) & =\beta^{\Varphi}(a_1, a_2 , a_3)+\beta^{\Varphi}(a_3, a_1 , a_2)+\beta^{\Varphi}(a_2, a_3 , a_1)                                                          \\
	                       & \hspace{2cm}+ \beta^{\Varphi}(a_1, a_2)\beta^{\Varphi}(a_3)+\beta^{\Varphi}(a_2, a_3)\beta^{\Varphi}(a_1) + \beta^{\Varphi}(a_3, a_1)\beta^{\Varphi}(a_2) \\
	                       & \hspace{2cm}+\beta^{\Varphi}(a_1)\beta^{\Varphi}(a_2)\beta^{\Varphi}(a_3).
\end{align*}
We now give a more geometrical point of view about the functional $[\Varphi]$. Suppose that $(\mathcal{A},\Varphi)$ is a $W^*$-probability space. Recall that this means, in particular, that $\mathcal{A}$ is a von Neumann algebra and $\Varphi$ is positive and faithful. The Gelfand-Naimark-Segal construction applied to $\mathcal{A}$ is faithful and $\mathcal{A}$ can thus be identified with a subalgebra of $B(L^{2}(\mathcal{A},\Varphi))$, the algebra of bounded linear functional acting on $L^{2}(\mathcal{A},\Varphi)$. In this representation $\Varphi(a)=\langle a\Omega,\Omega\rangle_H$ for some cyclic vector $\Omega \in L^2(\mathcal{A})$. Set $H=L^{2}(\mathcal{A})$.
We denote by $p:H\to H$ the projection onto $\mathbb{C}\Omega$ and set $q=({\rm id}_H-p)$ the projection onto the hyperplane orthogonal to $\mathbb{C}\Omega$.

In the article \cite{kerov1997interlacing} of Kerov, it is proven that if the probability distribution of a random variable $A=A^*$ is $\mu$, then the inverse Markov-Krein transform of $\mu$ has its $n^{th}$ moment equal to:
$${\rm Tr}_{H}\Big[A^n-(q Aq)^n\Big]$$
Notice that the trace is well-defined because for any polynomial $Q\in\mathbb{C}[X]$, $Q(A)-Q(qAq)$ is a trace-class operator (it has finite rank). This inverse Markov-Krein transform is uniquely determined by $\mu$. The natural multivariate version of the \emph{inverse Markov-Krein transform} of $\Varphi$ is thus the distribution given, for any $a_1,\ldots,a_n\in \mathcal{A}$, by
\begin{align}
	\label{eqn:MKtransform}
	a_1\otimes \ldots \otimes a_n \mapsto {\rm Tr}_H\Big[a_1 \cdots  a_n-q a_1q\cdot q a_2 q \ldots q a_n q\Big]
\end{align}
(here again the trace is well-defined).

\begin{proposition}\label{prop:MKt}The multivariate inverse Markov-Krein transform of $\Varphi$, defined by \ref{eqn:MKtransform}
	is equal to the linear form $[\Varphi]$.
\end{proposition}

\begin{remark}
	In the particular case where $\Varphi$ is tracial, this definition of the multivariate inverse Markov-Krein transform coincides with the one proposed independently by Fujie and Hasebe in~\cite{fujie2023free}.
\end{remark}
\begin{proof}Let $a_1,\ldots,a_n \in B(\mathcal{H})$.Inserting ${\rm id}_H=p+q$ in the equation
	$$\Varphi(a_1\cdots a_n)={\rm Tr}_H\Big[p a_1 \cdot {\rm id}_H\cdot a_2\cdot {\rm id}_H\cdots {\rm id}_H\cdot a_np\Big],$$ one infers that the Boolean cumulants are in fact given by
	$$
		\beta^\Varphi(a_1,\ldots, a_n)= {\rm Tr}_H\Big[p a_1 \cdot q\cdot a_2\cdot q\cdots q\cdot a_np\Big].$$
	Now,
	\begin{align*}
		{\rm Tr}_H\Big[a_1 & \cdots  a_n-q a_1q\cdot q a_2 q \ldots q a_n q\Big]=
		{\rm Tr}_H\Big[pa_1 \cdots  a_np + qa_1 \cdots  a_nq -q a_1q\cdot q a_2 q \ldots q a_n q\Big] \\
		                   & = \!\!\!\!\!\!\!\!\sum_{\substack{\ell \geq 1                            \\ 1 \leq i(1)<\cdots <i(\ell)=n}}\!\!\!\!\prod_{k=1}^\ell\beta^{\Varphi}(a_{i(k)},a_{i(k)+1}\ldots,a_{i(k+1)-1}) \\
		                   & \hspace{3cm}+\!\!\!\!\!\!\!\!\sum_{\substack{\ell \geq 1                 \\ 1 \leq i(1)<\cdots <i(\ell)< n}}\!\!\!\!\prod_{k=1}^\ell\beta^{\Varphi}(a_{i(k)},a_{i(k)+1}\ldots,a_{i(k+1)-1})
	\end{align*}
	where the first sum corresponds to the term ${\rm Tr}_{H}[pa_1 \cdots  a_np]$, this is a sum over interval partitions with at least one block. The second sum corresponds to the term ${\rm Tr}_{H}[qa_1 \cdots  a_nq -q a_1q\cdot q a_2 q \ldots q a_n q]$, this is a sum over interval partitions with at least \emph{two blocks}. Grouping these two sums yields the formula \eqref{eqn:definsoulcy} defining $[\Varphi]$.
\end{proof}

Now we present the consequences of Proposition \ref{prop:MKt}, which clarify how cyclic conditional freeness connects with other independences. We start with recalling the relevant definitions.

\begin{definition}[Conditional freeness \cite{bozejko1996convolution}]
	A family of subalgebras $\mathcal{A}_{i}, i\in I$ are said \emph{conditionally free} relatively to the pair $(\Psi,\Varphi)$ if for any alternating sequence $a_1,\ldots,a_n$ with $a_k\in \mathcal{A}_{i_k}$, $i_1\neq i_2\neq \cdots \neq i_n$ and centered with respect to $\Psi$,
	one has
	\[
		\Varphi(a_1\cdots a_n) = 0.
	\]
\end{definition}


\begin{definition}[Cyclic conditional freeness \cite{cebron2022asymptotic}]
	\label{def:cycfreeness}
	A family of subalgebras  $\mathcal{A}_i\subset \mathcal{A}:i\in I$ is \emph{cyclically conditionally free} relatively to the triple $\Psi,\Varphi,\OmegA$ if
	\begin{enumerate}
		\item the family $(\mathcal{A}_i\subset \mathcal{A}:i\in I)$ is conditionally free relatively to $(\Psi,\Varphi)$,
		\item and, for any sequence $ (a_1,\ldots,a_n)$ of $\bigcup_{i\in I}\mathcal{A}_i$ which is \emph{cyclically alternating} and centered with respect to $\Psi$,
		      \begin{equation}
			      \label{eqn:cdnw}
			      \OmegA(a_1\cdots a_n) = \Varphi(a_1) \cdots \Varphi(a_n).
		      \end{equation}
	\end{enumerate}
\end{definition}
\begin{remark}
	When $\Varphi = \Psi$, cyclic conditional freeness reduces to cyclic freeness, see Definition \ref{def:cycfreeness}.
\end{remark}

\begin{theorem}\label{th:fromfreetocyclic}
	Let $\mathcal{A}_i \subset \mathcal{A}, i \in I$ unital subalgebras of $\mathcal{A}$, \emph{conditionally free} relatively to $\Psi,\Varphi$. Then, the subalgebras $T(\mathcal{A}_i),i\in I$ of the tensor algebra $T(\mathcal{A})$ are \emph{cyclically conditionally free} relatively to the triple $(\Psi,\Varphi,[\Varphi])$.
\end{theorem}
\begin{proof}
	See Section \ref{sec:conditionaltocyclicconditional}.
\end{proof}
This theorem has several corollaries, each corresponding to a blue arrow in Fig. \ref{fig:Figureone}.
\begin{corollary}\label{lemma:fromfreetocyclic}
	Let $\mathcal{A}_i \subset\mathcal{A},i \in I$ be free unital subalgebras relatively to $\Psi$. Then, the $T(\mathcal{A}_i),\,i\in I$ are cyclically free subalgebras relatively to $(\Psi,\alpha [\Psi])$ for any $\alpha\in\mathbb{C}$.
\end{corollary}
\begin{proof}
	This is the particular case of the previous Theorem \ref{th:fromfreetocyclic} for which $\Varphi=\Psi$.
\end{proof}
\begin{corollary}\label{lemma:fromBooleantocyclic}
	Let $\mathcal{A}_i\subset\mathcal{A}, i \in I$ be \emph{Boolean independent} subalgebras relatively to $\Psi$. Then, $\bar{T}(\mathcal{A}_i), i \in I$ are \emph{cyclically Boolean independent} subalgebras relatively to $(\Varphi,[\Varphi])$.
\end{corollary}
\begin{proof}
	This is the particular case where $\Psi$ is vanishing on the $\mathcal{A}_i$.
\end{proof}
\begin{corollary}\label{lemma:frommonotonetocyclic}
	Let $\mathcal{A}_i,~i \in \{1,\ldots,n\}$ be subalgebras which are \emph{monotone independent} with respect to $\Psi$. Then, the algebras $\bar{T}(\mathcal{A}_i), i \in \{1,\ldots,n\}$  are \emph{cyclically monotone independent} with respect to $(\Varphi,[\Varphi])$.
\end{corollary}
\begin{proof}
	When $n=2$, this is the case where $\Psi$ vanishes on $\mathcal{A}_1$ and coincide with $\Varphi$ on $\mathcal{A}_2$. The case $n>2$ follows by induction, considering the algebra generated by $\{\mathcal{A}_1, \dots,\mathcal{A}_{n-1}\}$ and the algebra $\mathcal{A}_n$.
\end{proof}

The main contribution of this paper is the computation of the cyclic conditional multivariate cumulants. We give their definition and explain the strategy for proving the Theorem \ref{th:vanishing_of_mixed_cumulants}.

Before proceeding, we need some technical definitions to define the new relevant partitions involved in the calculations of cumulants. The standard notions and notations for non-crossing partitions can be consulted in Section \ref{sec:bgnd}.

Recall that the clockwise cyclic order of $[n]$ is a ternary relation $\mathcal{R}$ on $[n]$ defined as follows: $(x,y,z) \in\mathcal{R}$ if and only if one encounters $y$ when going from $x$ to $z$ in the clockwise direction on the circle where the elements of $[n]$ are arranged in same, clockwise, order.

Given a subset $V\subset [n]$, the \emph{cut} of $\mathcal{R}$ by $V = \{v_1,\ldots,v_k\}$ is the partial order $\prec_V$ on $[n]\backslash V$ defined as follows. The chains of $\prec_V$ are the cyclic intervals $(v_\ell,v_{\ell+1})_{\mathcal{R}}$ (using the convention $v_{k+1}=v_1$) equipped with the total order $\prec_{v_k}$ (a cut of $\mathcal{R}$ by a singleton):
$$ (i \prec_V j) \text{ if and only if } (v_k,i,v_{k+1}),(v_k,j,v_{k+1}),(v_k,i,j) \in \mathcal{R}. $$
This subset $V$ will be chosen as a block of a partition $\pi$ of $[n]$.

Given a non-crossing partition $\pi$ of $[n]$, there is another order, this time on $[n]$, which we can define, depending on the choice of a block $R$ in the Kreweras complement of $\pi$. We consider the set $\{1,1',\ldots,n,n'\}$ ordered clockwise. If $R$ is a subset of $\{1',2',\ldots,n'\}$, we define $\prec_{R}$ on $\{1,1',\ldots,n,n'\}\backslash V$ as before and we restrict it to $[n]$ to obtain a partial order, which we also denote $\prec_{R}$ on $[n]$.

\begin{figure}[!ht]
	\includesvg[scale=0.6]{partition.svg}
	\caption{\label{fig:partition}Let $\pi=\{ \{ \{1\}, \{2,6,15\}, \{3,5\},\{4\},\{7,11,12\}, \{8,10\},\{9\} \{13,14\},\{16\} \}\}$. On the left side, the block $V$ is drawn with dots, and the corresponding chains of the preorder $\prec_{V}$ are represented by arrows. On the right side the block $R$ in the Kreweras complement of $\pi$ is filled with black, $R=\{\{5',7'\},\{15'\}\}$. The chains of the preorder $\prec_{R}$ are also represented by arrows.}
\end{figure}

The set of partitions ${\rm CyNC}(n)$ of $\{1,\ldots,n\}$ involved in the definition of the cumulants for cyclic conditional freeness is the set of \emph{cyclic non-crossing partitions}. A partition $\pi$ of $[n]$ belongs to ${\rm CyNC}(n)$ if $\pi$ is a non-crossing partition of the set $\{1,\ldots,n\}$.
We use the notation ${\rm CyNC}(n)$ (in place of the usual notation ${\rm NC}(n)$) to emphasize the differences between the multiplicative extension of sequences of maps $f_n,\,Df_n\colon\mathcal{A}^{\otimes n}\to \mathbb{C}_{n\geq 1}$ with $Df_n$ \emph{cyclically invariant} for any $n\geq 1$;
\begin{equation}\label{eq:definitionofcyclicinvariance}
	Df_{n}(a_1,\ldots,a_n)=Df(a_n,a_1,\ldots,a_{n-1}),\quad a_1,\ldots,a_n \in \mathcal{A},
\end{equation}
along non-crossing partitions (as in the infinitesimal free moment-cumulant formula) and along ${\rm CyNC}(n)$. In the cyclic case, $\pi \in {\rm CyNC}(n)$, we set
\begin{align} \label{eq:definitionofcyclicextension}
	 & f_\pi(a_1,\ldots,a_n)=\prod_{\{i_1, \dots,  i_k\} \in \pi}  f_{k}(a_{i_1}, a_{i_2}, \ldots, a_{i_{k}}),                                                                                 \\
	 & [f,Df]_{\pi}(a_1,\ldots,a_n)=\sum_{V\in\pi}Df_{|V|}(a_v, v\in V)\!\!\!\!\!\!\!\!\prod_{\{i_1 \prec_V \cdots \prec_V i_k\}\neq V \in \pi}  f_{k}(a_{i_1}, a_{i_2}, \ldots, a_{i_{|V|}}),
\end{align}
See Figure \ref{fig:partition} for a graphical representation.

\begin{definition}[cyclic conditional cumulants]\label{def:cycfreecumulantstwo} The \emph{cyclic conditional free cumulants} of the triple $(\Psi,\Varphi,\OmegA)$ is a sequence
	$$
		\cck=(\kappa_n^{\OmegA|\Varphi,\Psi})_{n\geq 1},
	$$
	where, for each $n\geq 1$,
	$$
		\cck_n \colon \mathcal{A}^{\otimes n} \to \mathbb{C},
	$$
	is recursively defined by, for any $a_1,\ldots,a_n \in \mathcal{A}$:
	\begin{multline}
		\label{eq:defmomentcumulant}
		\OmegA(a_1\cdots a_n)
		+(1-\OmegA(1_\mathcal{A})) [\Psi](a_1,\dots, a_n)-[\Varphi](a_1,\dots, a_n) \\
		=\cck_n(a_1,\dots, a_n)+\sum_{\substack{\pi\neq \mathbf{1}_n \\ \in {\rm CyNC}(n)}}[\kappa^{\Psi},\kappa^{\OmegA | \Varphi,\,\Psi}]_{\pi}(a_1 , \ldots , a_n).\nonumber
	\end{multline}
	where $\kappa^{\Psi} = (\kappa_n^{\Psi}\colon\mathcal{A}^{\otimes n}\to\mathbb{C})_{n\geq 1}$ is the sequence of free cumulants relatively to $\Psi$.

\end{definition}
\begin{example} We expand the above formula at low orders $(1,2$ and $3)$
	\begin{equation*}
		\kappa_1^{\OmegA \, | \, \Psi,\Varphi}(a_1) = \OmegA(a_1)+\beta^{\Psi}(a_1)-\beta^{\Varphi}  (a_1)
	\end{equation*}
	\begin{multline*}
		\kappa_2^{\OmegA \, | \, \Psi,\Varphi}(a_1,a_2)=\OmegA(a_1a_2)+\beta^{\Psi}(a_1,a_2)+\beta^{\Psi}(a_1)\beta^{\Psi}(a_2)-(\beta^{\Varphi}(a_1,a_2)+\beta^{\Varphi}(a_1)\beta^{\Varphi}(a_2))                       \\
		- \kappa_1^{\OmegA \, | \, \Psi,\Varphi}(a_1)\kappa^{\Psi}(a_2)-\kappa_1^{\OmegA \, | \, \Psi,\Varphi}(a_2)\kappa^{\Psi}(a_1)                                                                         \\
		=\OmegA(a_1a_2)+\beta^{\Psi}(a_1,a_2)+\beta^{\Psi}(a_1)\beta^{\Psi}(a_2)-(\beta^{\Varphi}(a_1,a_2)+\beta^{\Varphi}(a_1)\beta^{\Varphi}(a_2))                                                          \\
		- (\OmegA(a_1)+\beta^{\Psi}(a_1)-\beta^{\Varphi}(a_1)){\Psi}(a_2)-(\OmegA(a_2)+\beta^{\Psi}(a_2)-\beta^{\Varphi}(a_2)){\Psi}(a_1)
	\end{multline*}
	\begin{multline*}
		\kappa_3^{\OmegA \, | \, \Psi,\Varphi}(a_1, a_2, a_3) =\OmegA(a_1a_2a_3)+
		\beta^{\Psi}(a_1, a_2, a_3)+\beta^{\Psi}(a_3, a_1, a_2)+\beta^{\Psi}(a_2, a_3, a_1)                                                                                                                                  \\
		+ \beta^{\Psi}(a_1, a_2)\beta^{\Psi}(a_3)+\beta^{\Psi}(a_2, a_3)\beta^{\Psi}(a_1) + \beta^{\Psi}(a_3, a_1)\beta^{\Psi}(a_2)
		+\beta^{\Psi}(a_1)\beta^{\Psi}(a_2)\beta^{\Psi}(a_3))                                                                                                                                                                \\
		-\Big((\beta^{\Varphi}(a_1, a_2, a_3)+\beta^{\Varphi}(a_3, a_1, a_2)+\beta^{\Varphi}(a_2, a_3, a_1) + \beta_2^{\Varphi}(a_1, a_2)\beta^{\Varphi}(a_3)+\beta^{\Varphi}(a_2, a_3)\beta^{\Varphi}(a_1) \\
		+ \beta^{\Varphi}(a_3, a_1)\beta^{\Varphi}(a_2) +\beta^{\Varphi}(a_1)\beta^{\Varphi}(a_2)\beta^{\Varphi}(a_3)\Big)                                                                                    \\
		-(\OmegA(a_1)+\beta^{\Psi}(a_1)-\beta^{\Varphi}(a_1)){\Psi}(a_2,a_3)                                                                                                                                \\
		-\kappa_2^{\OmegA \, | \, \Psi,\Varphi}(a_1, a_2){\Psi}(a_3)-\kappa_2^{\OmegA \, | \, \Psi,\Varphi}(a_1, a_3){\Psi}(a_2)                                                                            \\
		=\OmegA(a_1a_2a_3)+
		\beta^{\Psi}(a_1, a_2, a_3)+\beta^{\Psi}(a_3, a_1, a_2)+\beta^{\Psi}(a_2, a_3, a_1)                                                                                                                                  \\
		+ \beta^{\Psi}(a_1, a_2)\beta^{\Psi}(a_3)+\beta^{\Psi}(a_2, a_3)\beta^{\Psi}(a_1) + \beta^{\Psi}(a_3, a_1)\beta^{\Psi}(a_2)
		+\beta^{\Psi}(a_1)\beta^{\Psi}(a_2)\beta^{\Psi}(a_3))                                                                                                                                                                \\
		-\Big((\beta^{\Varphi}(a_1, a_2, a_3)+\beta^{\Varphi}(a_3, a_1, a_2)+\beta^{\Varphi}(a_2, a_3, a_1) + \beta_2^{\Varphi}(a_1, a_2)\beta^{\Varphi}(a_3)+\beta^{\Varphi}(a_2, a_3)\beta^{\Varphi}(a_1) \\
		+ \beta^{\Varphi}(a_3, a_1)\beta^{\Varphi}(a_2) +\beta^{\Varphi}(a_1)\beta^{\Varphi}(a_2)\beta^{\Varphi}(a_3)\Big)                                                                                    \\
		-(\OmegA(a_1)+\beta^{\Psi}(a_1)-\beta^{\Varphi}(a_1)){\Psi}(a_2,a_3)                                                                                                                                \\
		-\Big(\OmegA(a_1a_2)+\beta^{\Psi}(a_1,a_2)+\beta^{\Psi}(a_1)\beta^{\Psi}(a_2)-(\beta^{\Varphi}(a_1,a_2)+\beta^{\Varphi}(a_1)\beta^{\Varphi}(a_2))                                                   \\
		- ((\OmegA(a_1)+\beta^{\Psi}(a_1)-\beta^{\Varphi}(a_1)){\Psi}(a_2)-(\OmegA(a_2)+\beta^{\Psi}(a_2)-\beta^{\Varphi}(a_2)){\Psi}(a_1))\Big){\Psi}(a_3)                                                   \\
		-\Big((\OmegA(a_1a_3)+\beta^{\Psi}(a_1,a_3)+\beta^{\Psi}(a_1)\beta^{\Psi}(a_3)-(\beta^{\Varphi}(a_1,a_2)+\beta^{\Varphi}(a_1)\beta^{\Varphi}(a_2))                                                    \\
		- (\OmegA(a_1)+\beta^{\Psi}(a_1)-\beta^{\Varphi}(a_1)){\Psi}(a_2)-(\OmegA(a_2)+\beta^{\Psi}(a_2)-\beta^{\Varphi}(a_2)){\Psi}(a_1))\Big){\Psi}(a_2)
	\end{multline*}
\end{example}
In the context of several algebras $\mathcal{A}_i,\,i\in I$, we refer to \emph{mixed cumulants} those cumulants with arguments coming from at least two distinct algebras $\mathcal{A}_i$. We refer the reader to \cite{speicher2020lecture,bozejko1996convolution} for the definition of free and conditionally free cumulants.
\begin{theorem}\label{th:vanishing_of_mixed_cumulants}
	Consider $\mathcal{A}_i,\,i\in I$ subalgebras of $\mathcal{A}$. Then the following are equivalent:
	\begin{enumerate}
		\item The algebras $\mathcal{A}_i,\,i\in I$ are cyclically conditionally free relatively to $(\Psi, \Varphi, \OmegA)$,
		\item the algebras $\mathcal{A}_i,\,i\in I$ are cyclically free with respect to $(\Psi, \Varphi, \OmegA - [\Varphi]-(1-\OmegA(1_{\mathcal{A}}))[\psi])$
		\item the mixed cyclic conditional free cumulants $\kappa^{\,\OmegA\,|\,\Varphi,\,\Psi}$, the mixed free cumulants $\kappa^{\Psi}$ and the mixed conditional cumulants $\kappa^{\Varphi \,|\, \Psi}$ vanish.
	\end{enumerate}
\end{theorem}
\begin{proof}

	See Section \ref{sec:prooftheoremvanishingmixedcumulants}.
\end{proof}

\subsection{Organization of the paper}

Besides the introduction, this work has four other sections.
In Section \ref{sec:cyclicinf}, we define cumulants for cyclic freeness and prove that they characterize it. We also extend the theorem of Février, Mastnak, Nica and Szpojankowski \cite{fevrier2019construction} connecting conditional freeness and infinitesimal freeness to the context of cyclic freeness, see Theorem \ref{th:fromcondfreetocyclic}.  Finally, in
\ref{sec:cyclicallyalternatingproduct}
we provide formulae for computing the cyclic free cumulants with products as entries. This is done later in Proposition \ref{prop:productccfree}, for cyclic conditional  cumulants.

In Section \ref{sec:conditionaltocyclicconditional}, we prove Theorem \ref{th:fromfreetocyclic} establishing a connection between conditional freeness and cyclic conditional freeness and in Section \ref{sec:prooftheoremvanishingmixedcumulants} we prove Theorem \ref{th:vanishing_of_mixed_cumulants} that cyclically free conditional cumulants characterize cyclic conditional freeness. We also draw the connection between our cyclically free conditional cumulants and the cyclic Boolean cumulants.

In Section \ref{sec:convolutions and limit theorems}, we give relations between formal transforms to compute the distribution of the sum and the product of two cyclically conditionally free elements and use them to prove limit theorems.

Finally, in Section \ref{sec:graphs} we give a graph product interpretation to cyclic conditional freeness.

\section{Cyclic and infinitesimal freeness}

\subsection{Background and notation for partitions } \label{sec:bgnd}
We collect basic definitions which will be used through this article. For any $n\geq 1$, $[n]$ is the interval of integers $\{1,2,\ldots,n\}$ and if $m\geq 1$, $\llbracket m,n \rrbracket$ is the set comprising all integers between $m,n$. We will use the variants $\llbracket m,n \llbracket = \{m,m+1,\ldots,n-1\}$ and so on.

A \emph{partition} is a collection of non-empty, disjoint subsets (called blocks) whose union is the entire set. A \emph{non-crossing partition} is a partition with the additional property that if $i,j \sim_\pi$ and $i<k<j<q$ then $k \not\sim_\pi q$. The set of non-crossing partitions of $[n]$ will be denoted ${\rm NC}(n)$. Recall that in the introduction we defined the set ${\rm CyNC}(n)$ of \emph{cyclic non-crossing partitions}.

An \emph{interval partition} is a partition such that every block is an interval of consecutive integers. The set of interval partitions will be denoted ${\rm Int}(n)$.

A \emph{cyclic interval partition} is a partition such that every block is a cyclic interval of consecutive integers (for the clockwise order). The set of cyclic interval partitions of $[n]$ will be denoted  ${\rm CyInt}(n)$. Cyclic interval partitions are non-crossing partitions, with all blocks being intervals but one : the block $1$ and $n$ belongs to (if it exists).

The Kreweras complement $K(\pi)$ of a non-crossing partition $\pi \in \nc(n)$ is the largest non-crossing partition of the set $\{\overline{1},\ldots,\overline{n}\}$ such that $\pi \cup K(\pi)$ is a non-crossing partition of the set $\{1,\overline{1},\ldots,n,\overline{n}\}$ when the elements are arranged in the clockwise order $1,\overline{1},2,\overline{2},\ldots,n,\overline{n}$. See \cite{kreweras1972partitions, nica2006lectures} for more details on non-crossing partitions and Kreweras complements.

Given a word $a = a_1\dots a_n$ on elements in $\mathcal{A}$ and a subset $S \subset [n]$, we denote by $a_S$ the \emph{subword} of $a$ consisting of the letters indexed by $S$. If $S$ is equipped with a total order $\prec$, we denote by $a_{(S,\prec)}$ the subword of $a$ consisting of the letters indexed by $S$, arranged according to the order $\prec$.

Note that a cyclic interval partition $I\neq \mathbf{1}_n$ of $[n]$ has a Kreweras complement which contains only one not-a-singleton block.

\subsection{Cumulants for cyclic freeness} \label{sec:cyclicinf}
In this section we address the problem of defining cumulants for cyclic freeness (see Definition \ref{def:cyclicfreeness}). We begin with defining these cumulants when $\OmegA(1_{\mathcal{A}})=0$. Next, we exhibit a pair $(\Psi,[\Psi])$ such that freeness with respect to $\Psi$ implies cyclic freeness with respect to $(\Psi,[\Psi])$. Finally, we define the cyclic free  cumulants of $(\Psi,\OmegA)$ as the -- restriction of the -- cyclic free cumulants of $(\Psi, D^{\Psi}\OmegA)$ where $D^{\Psi}\OmegA=\OmegA-\OmegA(1_{\mathcal{A}})\cdot [\Psi]$.

Let $f_n,Df_n\colon \mathcal{A}^{\otimes n}\to \mathbb{C}$ be two sequences of linear maps with $Df_n$ cyclically invariant. Recall the extension $[f,Df]$ along ${\rm CyNC}(n)$ defined in \eqref{eq:definitionofcyclicextension}.

\begin{proposition} \label{prop:leibniz}
	The function $[f,Df]$ is the unique functional extending $Df$ to ${\rm CyNC}$ and satisfying the following properties:
	\begin{enumerate}
		\item for any partition $\pi \in \nc(n)$, we have
		      \begin{equation}
			      \label{eqn:cycinv}
			      Df_{\pi}(a_1, \dots, a_n)= Df_{c \cdot \pi}(a_{c(1)},\dots, a_{c(n)}),~a_1,\ldots,a_n \in \mathcal{A},
		      \end{equation}
		      where $c\cdot\pi = \{ c(V),v\in\pi \}$ and $c=(1,\ldots,n)$ is the cyclic shift.
		\item \label{def:Dfpi}for non-crossing partitions $\pi_1\in \nc(\{1,\ldots,m\}$ and $\pi_2\in \nc(\{m+1,\ldots,m+n\}$ and for any ${a_1\otimes\cdots\otimes a_n}\in \mathcal{A}^{\otimes n}$, ${b_1\otimes\cdots\otimes b_m}\in \mathcal{A}^{\otimes m}$:
		      \begin{align*}
			       & D f_{\pi_1\cup \pi_2}(a_1,\dots, a_n, b_1,\dots, b_m)                                                                                          \\
			       & \hspace{2cm}=D f_{\pi_1}(a_1,\cdots, a_n) \cdot f_{\pi_2}(b_1,\dots, b_m)  + f_{\pi_1}({a_1,\cdots, a_n})\cdot D f_{ \pi_2}({b_1,\dots, b_m}).
		      \end{align*}
	\end{enumerate}
\end{proposition}
\begin{proof}
	The formula is a consequence of the formula \eqref{eq:definitionofcyclicextension} and the cyclic invariance of $Df$ (point (1)).
\end{proof}
\begin{example} Let $a_1,\ldots,a_4 \in \mathcal{A}$,
	\begin{multline*}
		Df_{ \{1,4\}, \{2,3\}}(a_1, a_2 , a_3 , a_4)=Df_{\{1,2\},\{3,4\}}(a_2 , a_3 , a_4 , a_1)\\ =f_2(a_2 , a_3)Df_2(a_4 , a_1)+Df_2(a_2 , a_3)f_2(a_4 , a_1).
	\end{multline*}
\end{example}

\begin{definition}[Cyclic free  cumulants]\label{def:cycfreecumulants}
	Suppose that $D\Psi:\mathcal{A}\to \mathbb{C}$ is tracial with $D\Psi(1_\mathcal{A})=0$. The \emph{cyclic free cumulants} relatively to $(\Psi,D\Psi)$ is a sequence :
	$$
		D\kappa_n^{\Psi}\colon\mathcal{A}^{\otimes n}\to \mathbb{C},\quad n\geq 1,
	$$
	recursively defined by
	\begin{align} \label{eqn:cymomentcumulants}
		 & D\Psi(a_1\cdots a_n) = D\kappa^{\Psi}_{n}(a_1,\dots, a_n) + \sum_{\substack{\pi \neq \mathbf{1}_n \\ \in \cync(n)}}[\kappa^{\Psi},D\kappa^{\Psi}]_{\pi}(a_1,\ldots, a_n),
	\end{align}
	where the $\kappa_n^{\Psi}$ are the free cumulants, $a_1,\ldots,a_n \in \mathcal{A}$.
\end{definition}
\begin{example}
	Let $a_1,\ldots,a_4 \in \mathcal{A}$. Here are an example of computations; we have underlined the terms that differ from the infinitesimal free moment-cumulant relations:
	\begin{align*}
		 & D\Psi(a_1) = D\kappa^{\Psi}(a_1),                                                                                                                            \\
		 & D\Psi(a_1a_2)=D\kappa_2^{\Psi}(a_1, a_2)+ D\kappa_1^{\Psi}(a_1)\Psi(a_2)+\Psi(a_1)D\kappa_1^{\Psi}(a_2),                                                     \\
		 & D\Psi(a_1a_2a_3)=D\kappa_2^{\Psi}(a_1, a_2, a_3)+D\kappa_2^{\Psi}(a_1, a_2)\Psi(a_3)+\Psi(a_1)D\kappa_2^{\Psi}(a_1, a_3)+\Psi(a_2)D\kappa_2^{\Psi}(a_2, a_3) \\
		 & \hspace{3cm}+D\kappa^{\Psi}(a_1)\Psi(a_2a_3)+\underline{D\kappa_1^{\Psi}(a_2)\Psi(a_3a_1)}+D\kappa_1^{\Psi}(a_3)\Psi(a_1a_2).
	\end{align*}
\end{example}

\begin{proposition}
	\label{prop:cyclicinvariance}
	The cyclic free  cumulants $(D\kappa^{\Psi}_{n})_{n\geq 1}$ are cyclically invariant:
	$$
		D\kappa_n^{\Psi}(a_1,\cdots,a_n)=D\kappa_n^{\Psi}(a_{c(1)},\cdots, a_{c(n)}),\quad a_1,\ldots,a_n \in \mathcal{A},
	$$
	where $c=(1,\ldots,n)$ is the cyclic permutation of the arguments.
\end{proposition}
\begin{proof}
	It is a simple consequence of the fact that $(D\kappa^{\Psi}_{n},\, n\geq 1)$ are uniquely determined by the equation \eqref{eqn:cymomentcumulants} and that $(D\kappa_{n}^{\Psi}\circ c,\, n\geq 1)$ where $c$ is the cyclic permutation of the arguments also satisfies \eqref{eqn:cymomentcumulants}.
\end{proof}

\begin{theorem}\label{th:cyclicfree}
	Cyclic freeness relatively to $(\Psi,D\Psi)$ is equivalent to the vanishing of all mixed free cumulants $(\kappa_n^{\Psi})_{n\geq 1}$ and mixed cyclic free  cumulants $(D\kappa_n^{\Psi})_{n\geq 1}$, as defined in Definition~\ref{def:cycfreecumulants}.
\end{theorem}
\begin{proof}
	See the following Section~\ref{sec:proofthmvanish}.
\end{proof}



Let $\OmegA$ be a tracial linear functional over $\mathcal{A}$.
We now define the cyclic free  cumulants for a general pair $(\Psi,\OmegA)$ as the restriction of the cyclic free cumulants (see Definition \ref{def:cycfreecumulants}) relatively to the pair $(\Psi, \OmegA - \OmegA(1_{\mathcal{A}}) [\Psi])$ (they are multilinear functional on the algebra $T(A)$) to arguments in $\mathcal{A} \subset T(\mathcal{A})$. Before doing, we need to extend $\Psi$ and $\OmegA$ to $T(\mathcal{A})$. We do this by considering their multiplicative extensions, still denoted by $\Psi$ and $\OmegA$ respectively:
$$
	\Psi(a_1 \otimes \dots \otimes a_n) = \Psi(a_1 \ldots a_n),\quad \OmegA(a_1\otimes\dots\otimes a_n) = \OmegA(a_1 \ldots a_n),\quad a_1,\ldots,a_n \in \mathcal{A}.
$$
and
$$
	\Psi(1) = 1, \quad \OmegA(1) = \OmegA(1_{\mathcal{A}}).
$$

\begin{definition}[Cyclic free  cumulants -- general case]
	\label{def:cyclicfreegencase}The cyclic free  cumulants with respect to the pair $(\Psi,\OmegA)$ are the restriction to $T(\mathcal{A})$ of the cyclic free  cumulants $D\kappa^{\Psi,\,\OmegA}$ of $(\Psi, \OmegA-\OmegA(1_{\mathcal{A}}) [ \Psi ])$
	see Definition \ref{def:cycfreecumulants}. More precisely, the following moment-cumulant formula, for any $a_1,\ldots,a_n \in \mathcal{A}$, $n\geq 1$, holds:
	\begin{align*}
		\OmegA(a_1\cdots a_n) & = \OmegA(1_{\mathcal{A}})[\Psi](a_1,\dots,a_n) + \sum_{\substack{\pi \in \cync}(n)} [\kappa^{\Psi},D\kappa^{\Psi,\OmegA}]_{\pi}(a_1,\ldots,a_n).
	\end{align*}
\end{definition}
\begin{example} We have
	\begin{align*}
		 & \OmegA(a_1) = \OmegA(1_{\mathcal{A}}) \beta^{\Psi}(a_1)+D^{\kappa,\,\OmegA}(a_1),                                                                                                             \\
		 & \OmegA(a_1a_2)=\OmegA(1_{\mathcal{A}}) (\beta^{\Psi}(a_1, a_2)+\beta^{\Psi}(a_1)\beta^{\Psi}(a_2))+D\kappa_2^{\Psi}(a_1, a_2)+ D\kappa_1^{\Psi}(a_1)\Psi(a_2)+\Psi(a_1)D\kappa_1^{\Psi}(a_2), \\
		 & \OmegA(a_1a_2a_3)=\OmegA(1_{\mathcal{A}}) \{ \beta^{\Psi}(a_1, a_2 , a_3)+\beta^{\Psi}(a_1, a_2)\beta^{\Psi}(a_3)+\beta^{\Psi}(a_1, a_3)\beta^{\Psi}(a_2)                                     \\
		 & \hspace{3cm}+\beta^{\Psi}(a_2, a_3)\beta^{\Psi}(a_1) + \beta^{\Psi}(a_1)\beta^{\Psi}(a_2)\beta^{\Psi}(a_3)\}                                                                                  \\
		 & \hspace{3.5cm}+D\kappa_2^{\Psi}(a_1, a_2)\Psi(a_3)+\Psi(a_1)D\kappa_2^{\Psi}(a_2, a_3)+\Psi(a_2)D\kappa_2^{\Psi}(a_2, a_3)                                                                    \\
		 & \hspace{4.0cm}+D^{\kappa,\,\OmegA}(a_1)\Psi(a_2, a_3)+\underline{D^{\kappa,\,\OmegA}(a_2)\Psi(a_3, a_1)}+D^{\kappa,\,\OmegA}(a_3)\Psi(a_1a_2).
	\end{align*}
\end{example}
We now state the main theorem of this section.
\begin{theorem}\label{th:vanishingcyclicinf}
	Cyclic freeness relatively to $(\Psi,\OmegA)$ is equivalent to the vanishing of mixed free cumulants $\kappa^{\Psi}$ and of mixed cyclic free  cumulants $D^{\Psi,\,\OmegA}\kappa$.
\end{theorem}

\begin{proof}Cyclic freeness relatively to $(\Psi,\OmegA)$ between subalgebras $\mathcal{A}_i$ in $\mathcal{A}$ is equivalent to cyclic freeness with respect to $(\Psi,\OmegA)$ in $T(\mathcal{A})$ between the subalgebras $T(\mathcal{A}_i)\subset T(\mathcal{A})$. But the latter is equivalent to cyclic freeness with respect to $(\Psi,\OmegA- (1-\OmegA(1_\mathcal{A}))[\Psi])$ (from Corollary \ref{lemma:fromfreetocyclic} and the fact that the moments conditions for cyclic freeness are linear on the second, tracial state), which is equivalent to the vanishing of mixed free cumulants $\kappa^{\Psi}$ and of the mixed cyclic free  cumulants $D^{\Psi,\OmegA- (1-\OmegA(1_\mathcal{A}))[\Psi]}\kappa$ (as multilinear functionals on $T(\mathcal{A}))$, by Theorem~\ref{th:cyclicfree}.
\end{proof}

\subsection{Proof of Theorem \ref{th:cyclicfree}}\label{sec:proofthmvanish}

This section aims to apply the method proposed by Lehner in \cite{lehner2004cumulants} based on Good's formula for classical cumulants.  Since we do not assume our functionals to be unital, we are in a setting that slightly outflanks the one of \cite{lehner2004cumulants}. For the reader's convenience, we outline the main steps of the procedure.
Pick $n\geq 1$ an integer. As a quick reminder, Good's formula for the $n$-th \emph{classical cumulants} $c_n(X_1,\ldots,X_n)$ of a sequence of (essentially bounded) random variables $X_1,\ldots,X_n$ states, for any integer $n\geq 1$,
$$
	c_{n}(X_1,\ldots,X_n) = \frac{1}{n}\mathbb{E}[\tilde{X}^{[n]}_1\cdots\tilde{X}^{[n]}_n]
$$
where the random variables $\tilde{X}^{[n]}_i,~1 \leq i \leq n$ are defined by
$$
	\tilde{X}^{[n]}_i=\xi X_i + \xi^{2} X^{(2)}_i+ \cdots + \xi^{n}X_i^{(n)},\quad \xi = e^{\frac{2i\pi}{n}}$$
and the vectors $(X_i^{(k)})_{1 \leq i \leq n},\, 1\leq k \leq n$ are independent and identically distributed as $(X_i)_{1 \leq i \leq n}$. The \emph{partitioned cumulants} satisfies a similar formula. Let $\pi$ be a partition of $[n]$ and denote by $c_{\pi}(X_1,\ldots,X_n)$ the corresponding cumulants defined by:
$$
	c_{\pi}(X_1,\ldots,X_n):=\prod_{B=\{i_1<\ldots<i_p\}\in\pi}c_{|B|}(X_{i_1},\ldots,X_{i_p}).
$$
As before, we pick $(X^{(k)}_i)_{1\leq i \leq n},~1\leq k \leq n$ independent copies of the random vector $(X_i)_{1\leq i \leq n}$. For each block $B \in \pi$, we pick a primitive $|B|$-th root of unity $\xi_{|B|}$ and set:
$$
	\tilde{X}^{B}_i := \xi_{|B|} X_i^{(1)} + \xi_{|B|}^2 X_i^{(2)} + \cdots + \xi_{|B|}^{|B|-1}X_i^{(|B|)}
$$
With $B_{(i)}$ the unique block of $B$ that contains $i$, $ 1 \leq i \leq n$, $B_{(i)}$, one has
\begin{equation}
	\label{eqn:partitionedgoodformula}
	c_{\pi}(X_1\ldots,X_n)=\mathbb{E}[\tilde{X}_1^{B_{(1)}}\cdots \tilde{X}_n^{B_{(n)}}]
\end{equation}

In the context of non-commutative probability, and for commutative independences (exchangeable systems in the terminology of \cite{lehner2004cumulants}), one uses Good's formula \ref{eqn:partitionedgoodformula} to introduce non-commutative cumulants for the corresponding notion of independence at stake. As outlined above, the procedure involves taking independent copies of a random variable. In our case, it is possible to build a probability space $\tilde{\mathcal{A}}$ and linear functional $\tilde{\Psi},D\tilde{\Psi}:\tilde{\mathcal{A}}\to \mathbb{C}$($\tilde{\Psi}$ unital and $\tilde{D\Psi}$ \emph{tracial}, with $D\tilde{\Psi}(1_{\tilde{\mathcal{A}}})=0$), such that, for each $i\geq 1$, we have a state-preserving injection
$$\iota_i : (\mathcal{A},\Psi,D\Psi)\to (\mathcal{A},\tilde{\Psi},D\tilde{\Psi}),
$$ and such that the $\iota_{j}(\mathcal{A}), j\geq 1$ are cyclically free. This is because cyclic freeness is a particular case of cyclic conditional freeness for which there is a notion of free product (see \cite{cebron2022asymptotic}).

This permits the construction of cyclically free copies, and any number of them, of the algebra $\mathcal{A}$ in a bigger algebra $\tilde{\mathcal{A}}$. Following \cite{lehner2004cumulants}, we define
$$
	K_n^{\Psi},\quad \kappa_n^{D\Psi}:\mathcal{A}^{\otimes n} \to \mathbb{C},
$$
by for any integer $n\geq 1$ and $a_1,\ldots,a_n \in \mathcal{A}$:
$$
	K_n^{\Psi}(a_1,\dots,a_n)=\frac{1}{n}\tilde{\Psi}(\tilde{a}_1,\dots, \tilde{a}_n),~
	~K_n^{D\Psi}(a_1,\dots,a_n)=\frac{1}{n}D\tilde{\Psi}(\tilde{a}_1,\dots, \tilde{a}_n)
$$
where
$$
	\tilde{a}_{i} = \xi \cdot \iota_1(a_i) + \cdots + \xi^{n}\cdot \iota_n(a_i), \quad \xi = e^{\frac{2i\pi}{n}}.
$$
The work \cite{lehner2004cumulants} provides the following combinatorial characterization of independence: cyclic freeness with respect to $(\Psi,D\Psi)$ is equivalent to the vanishing of the mixed $(K_n^{\Psi})_{n\geq 1}$ and of the mixed $(K_n^{D\Psi})_{n\geq 1}$ cumulants.

The rest of the proof consists in proving that $K_n^{\Psi}=\kappa_n^{\Psi}$ and $K_n^{D\Psi}=D\kappa_n^{\Psi}$. The fact that $K_n^{\Psi}=\kappa_n^{\Psi}$ is already contained in the paper of \cite{lehner2004cumulants} (see Section~4.4).

The partitioned cumulants $K^{D\Psi}_\pi$, $\pi \in \mathcal{P}(n)$ are defined as explained above for partitioned classical cumulants. Namely, given a partition $\pi$ and a block $B\in\pi$ of $\pi$, $B = \{b_1 < \cdots < b_p\}$, we set for each integer $1 \leq i \leq p$,
$$
	a_{b_i}^{\pi} =  \xi_{|B|} \cdot  \iota_{b_1}(a_{b_i}) +\xi^2_{|B|} \cdot \iota_{b_2}(a_{b_i}) + \cdots + \xi^{p-1}_{|B|}\cdot \iota_{b_p}(a_{b_i})
$$
where $\xi_B$ is a $p^{th}$ primitive root of unity and set (see Proposition 2.8 in \cite{lehner2004cumulants}),
\begin{align*}
	 & K_{\pi}^{D\Psi}(a_1,\ldots,a_n) = (\prod_{B\in\pi}|B|)^{-1}D\tilde{\Psi}(a_1^{\pi} \cdots a_n^{\pi}).
\end{align*}
Now, from Proposition 2.7 in \cite{lehner2004cumulants}, the following moment-cumulant formula is in hold, for any $a_1,\ldots,a_n \in \mathcal{A}$:
\begin{equation*}
	D\Psi(a_1,\dots,a_n) = \sum_{\pi \in \mathcal{P}(n)} K^{D\Psi}_{\pi}(a_1,\dots,a_n).
\end{equation*}
\begin{proposition} \label{prop:firstpart}
	The following properties hold:
	\begin{enumerate}
		\item Each $K^{D\Psi}_\pi$ is cyclically invariant, namely, for any $a_1,\ldots,a_n \in \mathcal{A}$:
		      $$
			      K^{D\Psi}_\pi(a_1, \dots, a_n)=K^{D\Psi}_{c\cdot\pi}(a_n,a_1 ,\dots, a_{n-1}),
		      $$
		      where $c = (1,\ldots,n)$ is the cyclic shift.
		\item With $p_1\in \mathcal{P}(\{1,\ldots,m\})$, $p_2\in \mathcal{P}(\{m+1,\ldots,m+n\})$, two partitions and $p_1\cup p_2 \in \mathcal{P}(m+n)$ the concatenation of $p_1$ and $p_2$, for any $a_1,\ldots,a_m\in \mathcal{A}$ and $b_1,\ldots,b_n\in \mathcal{A}$:
		      \begin{align*}
			      K^{D\Psi}_{\pi_1\cup \pi_2}(a_1,\dots,a_m,b_1,\dots,b_n)= K^{D\Psi}_{\pi_1}(a_1, & \dots,a_m)  \kappa^\Psi_{\pi_2}(b_1,\dots,b_n)                                            \\
			                                                                                       & + \kappa^\Psi_{\pi_1}(a_1,\dots,a_m) K^{D\Psi}_{ \pi_2}(b_1,\dots,b_n).\label{eq:pullout}
		      \end{align*}
		      \
	\end{enumerate}
\end{proposition}

\begin{proof}
	\begin{enumerate}

		\item The first assertion follows from the fact that $D\tilde{\Psi}$ is tracial :
		      \begin{multline*}
			      K^{D\Psi}_\pi(a_1, \dots, a_n)= (\prod_{B\in\pi}|B|)^{-1}D\tilde{\Psi}(a_1^{\pi} \cdots a_n^{\pi})\\
			      = (\prod_{B\in c\cdot\pi}|B|)^{-1}D\tilde{\Psi}(a_n^{c\cdot \pi} a_1^{c\cdot \pi} \cdots a_{n-1}^{c\cdot \pi})
			      = K^{D\Psi}_{c\cdot\pi}(a_n,a_1 ,\dots, a_{n-1}).
		      \end{multline*}

		\item The second one follows from the fact that
		      \begin{equation*}
			      D\Psi(a_1\cdots a_mb_{1}\cdots b_n)=D\Psi(a_1\cdots a_m)\cdot\Psi(b_{1}\cdots b_n)+\Psi(a_1\cdots a_m)\cdot D\Psi(b_{1}\cdots b_n)\label{eq:derivationprop}
		      \end{equation*}
		      whenever $(a_1,\ldots,a_m)$ are cyclic free from $(b_1,\ldots,b_n)$ (recall $D\Psi(1_{\mathcal{A}})=0$) obtained by expanding :
		      \begin{equation*}
			      D\Psi([a_1\cdots a_m - \Psi(a_1\cdots a_m)][b_1\cdots b_n - \Psi(b_1\cdots b_n)]) = 0.
		      \end{equation*}
		      Hence,
		      \begin{align*}
			      K^{D\Psi}_{p} & (a_1,\cdots,a_n,b_1,\cdots,b_m)                                                                                                                                                                                                                                    \\
			                    & = (\prod_{B\in p_{1}}|B|)^{-1}(\prod_{B\in p_{2}}|B|)^{-1}D\tilde{\Psi}(a_1^{p_{1}} \cdots a_n^{p_{1}} b_1^{p_{2}} \cdots b_m^{p_{2}})                                                                                                                             \\
			                    & = (\prod_{B\in p_{1}}|B|)^{-1}(\prod_{B\in p_{2}}|B|)^{-1}\big(D\tilde{\Psi}(a_1^{ p_{1}} \cdots a_n^{ p_{1}}) \tilde{\psi}(b_1^{ p_{2}} \cdots b_m^{ p_{2}}) + \tilde{\Psi}(a_1^{ p_{1}} \cdots a_n^{ p_{1}})D\tilde{\psi}(b_1^{ p_{2}} \cdots b_m^{ p_{2}})\big) \\
			                    & = K^{D\Psi}_{ p_1}(a_1,\dots,a_m) \cdot \kappa^\Psi_{ p_2}(b_1,\dots,b_n) + \kappa^\Psi_{ p_1}(a_1,\dots,a_m)\cdot K^{D\Psi}_{  p_2}(b_1,\dots,b_n)
		      \end{align*}
	\end{enumerate}
\end{proof}
Applying Proposition \ref{prop:firstpart}, we conclude that for any non-crossing partition $\pi$,
$$
	D\kappa^{\Psi}_{\pi}(a_1,\ldots,a_n) = K^{D\Psi}_{\pi}(a_1,\ldots,a_n),\quad a_1,\ldots,a_n \in \mathcal{A}.
$$
What is left to prove is that $K^{D\Psi}_{\pi}(a_1,\ldots,a_n)=0$ whenever $\pi$ has a crossing. This is the content of Proposition below.

\begin{proposition}	 Let $\pi$ be a partition with a crossing:
	(recall that it means $
		\exists a < b < c < d,~ a \sim_\pi c,~b\sim_\pi d
	$)
	then $K^{D\Psi}_{\pi}(a_1,\dots,a_n)=0$.
\end{proposition}
\begin{proof}

	For the last point, let $ p$ be a partition with a crossing. Let us prove that the associated partitioned cumulants vanishes; $K^{D\Psi}_{ p}(a_1,\cdots,a_n)=0$.

	Recall that an connected partition $p \in \mathcal{P}(n)$ is a partition for which the coarsest non-crossing partition greater than $p$ is the one-block partition $\{ \{1,\ldots,n\} \}$.

	Given $p \in \mathcal{P}(n)$ we denote by $p^{\rm nc}$ the non-crossing closure of $p$, ie the coarsest non-crossing partition greater than $p$. The connected components of $p$ are the restrictions of $p$ to the blocks of ${p}^{nc}$.
	By using the previous proposition, we infer that $K^{D\Psi}_{ p}(a_1,\cdots,a_n)$ can be written in terms of the partitioned cumulants associated with the connected components of $p$. In fact, let $p_1$ be a connected component of $p$ corresponding to an interval block in $p^{nc}$. Then, by applying cyclic invariance, we can assume that this block is $\{1,\ldots,m\}$ for some $m<n$. Writing $p=p_1 \cup p_2$ where $p_2$ is the restriction of $p$ to $\{m+1,\ldots,n\}$, we have from the previous proposition:
	\begin{align*}
		K^{D\Psi}_{ p}(a_1,\dots,a_n)= K^{D\Psi}_{ p_1}(a_1, & \dots,a_m)  \kappa^\Psi_{ p_2}(a_{m+1},\dots,a_n)                        \\
		                                                     & + \kappa^\Psi_{ p_1}(a_1,\dots,a_m) K^{D\Psi}_{ p_2}(a_{m+1},\dots,a_n).
	\end{align*}
	We can then show, by induction on the number of connected components of $p$, that $K^{D\Psi}_{ p}$ is equal to a linear combination of products of free cumulants $\kappa^{\Psi}$ and the cumulants $K^{D\Psi}_{ p'}$ where $ p'$ are the connected components of $p$. Hence, it suffices to prove that $K^{D\Psi}_{ p}=0$ whenever $p$ is connected and contains a crossing. In particular, $p$ has no interval blocks and has at least two blocks. We can assume that $1$ and $n$ are not in the same block of $p$ (otherwise, by cyclic invariance, we can rotate the indices to make it so). Hence, for any $a_1,\ldots,a_n \in \mathcal{A}$, the cumulants $K^{D\Psi}_{ p}(a_1,\cdots,a_n)$ is the expectation of a cyclically alternating product of centered elements with respect to $\tilde{\Psi}$ and with respect to $D\tilde{\Psi}$. By cyclic freeness, we conclude that $K^{D\Psi}_{ p}(a_1,\cdots,a_n)=0$.
\end{proof}
\subsection{From conditional freeness to cyclic and infinitesimal freeness}
\label{sec:linkwith}
This section presents a connection between conditional freeness and infinitesimal or cyclic freeness.
If $S$ is a set, we denote by $\mathbb{C}\langle S \rangle$ the free unital complex algebra generated by $S$.
Theorem \ref{th:fromcondfreetocyclic} below corresponds to the {\color{gray} grey} arrow in Fig. \ref{fig:Figureone}. Following a remark by Belinschi and Shlyakhtenko, the authors Février, Mastnak, Nica and Szpojankowski introduced in \cite{fevrier2019construction} the tracial functional
$$
	W(\Varphi):\mathbb{C}\langle x_a:a\in\mathcal{A}\rangle\to \mathbb{C}
$$
defined by, for $a_1,\ldots,a_n \in \mathcal{A}$:
\begin{align}
	 & W(\Varphi)(x_{a_1} \cdots x_{a_n})=\sum_{1\leq i\leq n}\beta^{\Varphi}(a_i \otimes a_{i+1} \otimes \cdots \otimes a_n \otimes a_1 \otimes \cdots \otimes a_{i-1} \otimes a_i),\label{eq:defFMNS} \\
	 & W(\Varphi)(1)=0. \nonumber
\end{align}
The algebra $\mathbb{C}\langle x_a, a \in \mathcal{A} \rangle$ is the polynomial algebras generated by the non-commutative indeterminates $x_a, a \in \mathcal{A}$. This is not the same as the tensor algebra of $\mathcal{A}$ as the indeterminates $x_{\lambda a}$ and $x_{a}$ are not set equal ($\lambda \in \mathbb{C})$.
If given another linear functional $\psi\colon\mathcal{A}\to\mathbb{C}$, $\psi$ extends to a linear map $\Psi \colon \mathbb{C}\langle x_a:a\in\mathcal{A}\rangle \to \mathbb{C}$ :
\begin{equation*}
	\Psi(x_{a_1}\cdots x_{a_n})=\Psi(a_1\cdots a_n),\quad \Psi(1)=1,\quad a_1,\ldots,a_n\in\mathcal{A}.
\end{equation*}
The result of Février, Mastnak, Nica and Szpojankowski is stated as follows.

\begin{theorem}[Theorem 1.3 and 1.4 in \cite{fevrier2019construction}]
	\label{thm:szpojankowski}
	Suppose that $\Psi$ is \emph{tracial}. Let $\mathcal{A}_1 ,\mathcal{A}_2 \subset \mathcal{A}$ be two \emph{conditionally free} unital subalgebras relatively to $(\Psi,\Varphi)$. Then, the two algebras $\mathbb{C}\langle x_a:a\in\mathcal{A}_1\rangle$ and $\mathbb{C}\langle x_a:a\in\mathcal{A}_2\rangle$ are \emph{infinitesimally free} subalgebras (or equivalently cyclically free) of $(\mathbb{C}\langle x_a:a\in\mathcal{A}\rangle, \Psi,W(\Varphi))$,
\end{theorem}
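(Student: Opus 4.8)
The plan is to prove the equivalent statement (equivalent by traciality of $\Psi$ and Remark \ref{rk:cyfequalif}) that $\mathcal{B}_i:=\mathbb{C}\langle x_a:a\in\mathcal{A}_i\rangle$ are \emph{cyclically} free with respect to $(\Psi,W(\Varphi))$. By Definition \ref{def:cyclicfreeness} this splits into two assertions: that $\mathcal{B}_1,\mathcal{B}_2$ are free with respect to $\Psi$, and that $W(\Varphi)$ annihilates every cyclically alternating, $\Psi$-centred sequence. The first is essentially free of charge. On $\mathbb{C}\langle x_a\rangle$ the functional $\Psi$ is by construction the pull-back of $\Psi$ along the homomorphism $\rho\colon x_a\mapsto a$, which sends $\mathcal{B}_i$ into $\mathcal{A}_i$; and conditional freeness of $\mathcal{A}_1,\mathcal{A}_2$ already contains freeness with respect to $\Psi$ as its first clause (Definition \ref{def:conditionalfreeness}). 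Hence if $(y_1,\dots,y_n)$ is alternating and $\Psi$-centred in the $\mathcal{B}_i$, then $(\rho(y_1),\dots,\rho(y_n))$ is alternating and $\Psi$-centred in the $\mathcal{A}_i$, so $\Psi(y_1\cdots y_n)=\Psi(\rho(y_1)\cdots\rho(y_n))=0$. Note this already covers arbitrary elements, not just generators.

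The substance lies in the second assertion, which I would reduce to one vanishing statement for the Boolean cumulants of $\Varphi$: \emph{if $\mathcal{A}_1,\mathcal{A}_2$ are conditionally free and $(c_1,\dots,c_m)$ is alternating and $\Psi$-centred, then $\beta^{\Varphi}_m(c_1\otimes\cdots\otimes c_m)=0$ for all $m\geq 2$.} This is exactly where conditional freeness is used: the relation $\Varphi(c_1\cdots c_m)=\Varphi(c_1)\cdots\Varphi(c_m)$ holds verbatim for \emph{every} interval sub-word $(c_i,\dots,c_j)$, as any such sub-word is again alternating and $\Psi$-centred. Consequently the trial cumulants $\tilde\beta_1(c_l)=\Varphi(c_l)$, $\tilde\beta_{m}=0$ for $m\geq2$, already reproduce all the moments $\Varphi(c_i\cdots c_j)$ through \eqref{eq:Booleancum} (only the singletons partition survives), and invertibility of the moment--Boolean cumulant relation forces $\beta^{\Varphi}=\tilde\beta$. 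Granting this, the vanishing of $W(\Varphi)$ on generator words is transparent: for a cyclically alternating $\Psi$-centred word $x_{a_1}\cdots x_{a_n}$ with $a_k\in\mathcal{A}_{j_k}$, each summand $\beta^{\Varphi}_{n+1}(a_i\otimes a_{i+1}\otimes\cdots\otimes a_{i-1}\otimes a_i)$ defining $W(\Varphi)$ is the Boolean cumulant of the length-$(n+1)$ word obtained by rotating to $a_i$ and repeating it at the end. The cyclic-alternation hypothesis $j_1\neq j_n$ is precisely what makes the seam between $a_n$ and $a_1$ alternating, so that word is alternating and $\Psi$-centred; the lemma kills every summand and $W(\Varphi)(x_{a_1}\cdots x_{a_n})=0$.

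The only genuinely delicate point — and the step I expect to be the main obstacle — is promoting this from words in the generators $x_a$ to arbitrary $\Psi$-centred elements of $\mathcal{B}_1,\mathcal{B}_2$. Unlike $\Psi$, the functional $W(\Varphi)$ does \emph{not} factor through $\rho$, so the statement cannot simply be transported along the multiplication map; and expanding a product of centred blocks directly is dangerous, because deleting an intermediate block through centring creates a collision between two non-adjacent blocks lying in the same algebra, on which the Boolean-cumulant lemma no longer applies. The clean way to organise the bookkeeping is through cumulants: (infinitesimal/cyclic) freeness is equivalent to the vanishing of the mixed free cumulants $\kappa^{\Psi}$ together with the mixed infinitesimal cumulants, both multilinear and, via the cumulants-of-products expansion, decomposing into cumulants whose entries are individual generators that remain inside a single $\mathcal{B}_i$. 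It then suffices to verify vanishing on generators: the free part reduces to $\kappa^{\Psi}_n(x_{a_1}\otimes\cdots\otimes x_{a_n})=\kappa^{\Psi}_n(a_1\otimes\cdots\otimes a_n)=0$ for mixed entries (freeness of $\mathcal{A}_1,\mathcal{A}_2$), while the infinitesimal part is supplied by the Boolean-cumulant computation above. Carrying out this reduction carefully — equivalently, matching the cumulants produced here against the cyclically free cumulants of Section \ref{sec:cyclicinf} — is the technical core the remainder of the proof must supply.
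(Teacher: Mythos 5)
Your treatment of the generator-word case is correct, and your derivation of the vanishing $\beta^{\Varphi}_m(c_1\otimes\cdots\otimes c_m)=0$ for a \emph{letterwise} alternating, $\Psi$-centred sequence is a genuinely nicer argument than what the paper uses for that special case: since every interval subword of such a sequence is again alternating and centred, conditional freeness factorises all its $\Varphi$-moments, and the invertibility of the interval-partition moment--cumulant relation forces all higher Boolean cumulants to vanish. The observation that the rotated word $a_i\otimes\cdots\otimes a_{i-1}\otimes a_i$ of length $n+1$ is again alternating and centred precisely because of the cyclic seam $j_1\neq j_n$ is also right.

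However, the proof is not complete, and the gap you flag at the end is the actual content of the theorem, not a routine verification. The moment condition for cyclic freeness must hold for arbitrary $\Psi$-centred elements $P_k$ of $\mathbb{C}\langle x_a:a\in\mathcal{A}_{j(k)}\rangle$; writing $P_1\cdots P_N=x_{a_1}\cdots x_{a_n}$, the letters $a_l$ inside one block all lie in the \emph{same} algebra and are individually uncentred, so the word is only \emph{blockwise} alternating. Your trial-cumulant lemma genuinely fails there: conditional freeness says nothing about an interval subword $\Varphi(a_i\cdots a_j)$ that cuts through the interior of a block, so these moments do not factorise and you cannot conclude that $\beta^{\Varphi}$ of the rotated word vanishes by the same mechanism. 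The paper closes exactly this gap with Lemma~\ref{lemma:vanishingofbeta}, proved by expanding the Boolean cumulant over irreducible non-crossing partitions into conditional cumulants $\kappa^{\Varphi|\Psi}$ (which, by conditional freeness, survive only on blocks within a single algebra) and observing that the residual free-cumulant sums assemble into $\Psi$ applied to an alternating centred word, which vanishes by freeness. Your alternative exit route --- verifying vanishing of mixed cumulants on generators and invoking a cumulants-of-products formula --- is not a completion of what you have established: what you proved is a moment identity on letterwise-alternating generator words, whereas that route requires showing the mixed \emph{infinitesimal free cumulants} $\partial\kappa^{\Psi}$ of the pair $(\Psi,W(\Varphi))$ vanish on mixed tuples of generators, which is essentially the cumulant computation of F\'evrier--Mastnak--Nica--Szpojankowski and is nowhere supplied. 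As it stands the argument proves the theorem only for sequences in which every $P_k$ is a single centred generator.
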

We prove the following generalisation of Theorem \ref{thm:szpojankowski}.
\begin{theorem}\label{th:fromcondfreetocyclic}
	Let $\mathcal{A}_1 ,\mathcal{A}_2 \subset \mathcal{A}$ be two conditionally free unital subalgebras with respect to $(\Psi,\Varphi)$. Then, the subalgebras $\mathbb{C}\langle x_a:a\in\mathcal{A}_1\rangle$ and $\mathbb{C}\langle x_a:a\in\mathcal{A}_2\rangle$ are \emph{two cyclically free unital subalgebras of $\mathbb{C}\langle x_a:a\in\mathcal{A}\rangle$ with respect to the pair $(\Psi,W(\Varphi))$}, where $W(\Varphi)$ is given by~\eqref{eq:defFMNS}.
\end{theorem}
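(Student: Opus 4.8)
The plan is to verify directly the two defining requirements of Definition~\ref{def:cyclicfreeness} for the pair $(\Psi,W(\Varphi))$ acting on $\mathbb{C}\langle x_a:a\in\mathcal{A}_1\rangle$ and $\mathbb{C}\langle x_a:a\in\mathcal{A}_2\rangle$: freeness with respect to the extended functional $\Psi$, and the vanishing of $W(\Varphi)$ on cyclically alternating, $\Psi$-centred words. First I record that $W(\Varphi)$ is tracial: its defining sum \eqref{eq:defFMNS} runs over all cyclic starting points of the arguments (with the initial letter repeated), so it is manifestly invariant under cyclic permutation of $x_{a_1},\dots,x_{a_n}$; this is what makes Definition~\ref{def:cyclicfreeness} applicable at all.

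For freeness with respect to $\Psi$, I would introduce the ``collapse'' map $\phi\colon\mathbb{C}\langle x_a:a\in\mathcal{A}\rangle\to\mathcal{A}$ determined on monomials by $\phi(x_{a_1}\cdots x_{a_n})=a_1\cdots a_n$. This $\phi$ is a surjective unital algebra homomorphism, it satisfies $\Psi=\Psi\circ\phi$ by the very definition of the extension, and it maps $\mathbb{C}\langle x_a:a\in\mathcal{A}_i\rangle$ into $\mathcal{A}_i$. Freeness is preserved under such a pushforward: any $\Psi$-centred alternating word in the $x$-subalgebras is sent by $\phi$ to a $\Psi$-centred alternating word in $\mathcal{A}_1\cup\mathcal{A}_2$, whose $\Psi$-moment vanishes because $\mathcal{A}_1,\mathcal{A}_2$ are in particular free with respect to $\Psi$ (this being part of conditional freeness, Definition~\ref{def:conditionalfreeness}). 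Hence the $x$-subalgebras are free with respect to $\Psi$.

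The substance is the second requirement. Let $(y_1,\dots,y_n)$, $n\geq 2$, be cyclically alternating with $y_j\in\mathbb{C}\langle x_a:a\in\mathcal{A}_{i_j}\rangle$ and $\Psi(y_j)=0$; I must show $W(\Varphi)(y_1\cdots y_n)=0$. Using multilinearity I expand each $y_j$ into generator-monomials, so that $W(\Varphi)(y_1\cdots y_n)$ becomes a linear combination of terms $W(\Varphi)(x_{b_1}\cdots x_{b_N})$ whose letters form $n$ cyclically alternating monochromatic \emph{runs}, one per $y_j$; crucially the centredness $\Psi(y_j)=0$ is retained as a linear constraint on the run attached to $y_j$ (the individual letters of a run of length $\geq 2$ need \emph{not} be centred). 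By \eqref{eq:defFMNS} each such term is the cyclic sum $\sum_{i=1}^{N}\beta^{\Varphi}_{N+1}(b_i\otimes\cdots\otimes b_{i-1}\otimes b_i)$, and I expand every Boolean cumulant through its relation to the conditionally free cumulants,
\begin{equation*}
\beta^{\Varphi}_{m}(a_1\otimes\cdots\otimes a_m)=\sum_{\substack{\pi\in\nc(m)\\ 1\sim_\pi m}}[\kappa^{\Varphi|\Psi},\kappa^{\Psi}]_\pi(a_1\otimes\cdots\otimes a_m),
\end{equation*}
the sum being over non-crossing partitions whose unique outer block contains both $1$ and $m$. Invoking the characterisation of conditional freeness by the vanishing of mixed $\kappa^{\Psi}$ and mixed $\kappa^{\Varphi|\Psi}$, only partitions all of whose blocks are monochromatic survive.

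The main obstacle — the combinatorial heart — is to show that these monochromatic contributions cancel once the sums over the internal structure of the runs and over the rotation index $i$ are carried out, the only analytic input being the run-level centredness $\Psi(y_j)=0$. The mechanism I expect: in any surviving configuration the non-crossing constraint together with the colour-alternation of the runs forces at least one run of colour opposite to the ``seam'' run (the one carrying the repeated letter $b_i$) to sit entirely inside inner blocks, possibly after a cross-run block forces such an enclosure; summing the $\kappa^{\Psi}$-factors over all internal non-crossing partitions of that enclosed run — which the Boolean expansion automatically supplies — rebuilds the $\Psi$-moment of the run, and summing over that run's monomial expansion yields exactly $\Psi(y_j)=0$. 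Since $n\geq 2$, such an enclosed run always exists, so every term vanishes. Establishing this rigidity precisely — that a complete run is always enclosed and that its internal free-cumulant sum factors off cleanly — is where the work lies. The non-tracial setting is what forces one to keep the cyclic order of the arguments throughout (the inversions $a_na_1,a_{n-1}a_2,\dots$ that distinguish cyclic from infinitesimal freeness), so the Février–Mastnak–Nica–Szpojankowski argument cannot be quoted verbatim and must be rerun with the rotation-indexed sums of \eqref{eq:defFMNS}.
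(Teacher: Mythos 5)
Your route is the paper's own: traciality of $W(\Varphi)$, reduction by multilinearity to cyclically alternating $\Psi$-centred monomial words, rewriting of each rotation term of \eqref{eq:defFMNS} as a Boolean cumulant with repeated first/last letter, and expansion of that Boolean cumulant over irreducible non-crossing partitions into conditional and free cumulants. The traciality remark and the pushforward argument for freeness with respect to $\Psi$ are correct. The genuine gap is the step you yourself flag as ``where the work lies'': the vanishing of the surviving contributions is asserted as an expected mechanism rather than proved, and that step is exactly the content of the paper's Lemma~\ref{lemma:vanishingofbeta}, which the paper isolates and proves separately. As written, your text is a correct strategy with its decisive combinatorial lemma missing.

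The mechanism you anticipate is the right one, and it closes more easily than you suggest; in particular, no cancellation over the rotation index $i$ is needed, since each term $\beta^{\Varphi}(a_i\otimes a_{i+1}\otimes\cdots\otimes a_{i-1}\otimes a_i)$ vanishes individually. Concretely: in the expansion over irreducible $\pi\in{\rm NC}(r)$, single out the block $V$ containing $1$ and $r$, which carries the conditional cumulant while every other block carries $\kappa^{\Psi}$. Vanishing of mixed conditional cumulants forces $V$ to be monochromatic, of the colour of the repeated letter $a_i$, say $\mathcal{A}_1$. For fixed $V$ you should \emph{not} first discard mixed inner blocks: summing the free cumulants over \emph{all} admissible inner partitions reconstitutes $\prod_m\Psi(a_{W_m})$, where $W_1,\ldots,W_\ell$ are the maximal intervals (gaps) of $[r]\setminus V$. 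Since $N\geq 2$, the rotated word contains at least one complete $\mathcal{A}_2$-run; its letters are consecutive and none lies in $V$, so the entire run sits inside a single gap $W_m$. That gap, grouped into runs, is an alternating word whose interior factors are complete runs, hence centred by hypothesis, with only the two edge factors (partial $\mathcal{A}_1$-runs) possibly uncentred; ordinary freeness of $\mathcal{A}_1$ and $\mathcal{A}_2$ then gives $\Psi(a_{W_m})=0$, which kills the product. This is the ``rigidity'' you were after, and it amounts to one paragraph once $V$ has been isolated.
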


\begin{proof}
	Since the linear functional $W(\Varphi)$ is tracial, it suffices to prove for every cyclically alternating sequence $(P_1,\ldots,P_N)$ ($N\geq 2$) of centered elements with respect to $\Psi$ that:
	$$
		W(\Varphi)(P_1\cdots P_N) =0.$$
	For convenience of writing, we will assume that $P_j$ is a monomial, but everything can be extended to the general case by the multilinearity of all functionals involved.  We rewrite the full product as
	$$P_1\cdots P_N=x_{a_1}\cdots x_{a_n},$$
	with $P_k=\overrightarrow{\prod}_{i\in I_k}x_{a_i}$ for a certain interval partition $[n]=I_1\cup \cdots \cup I_N$. We have $\mathcal{A}_{j(1)},\ldots,\mathcal{A}_{j(N)} \in \{\mathcal{A}_1,\mathcal{A}_2\}$ in such a way that $i\in I_k \Rightarrow a_i\in \mathcal{A}_{j(k)}$ and the cyclic alternating condition means that $j(k)\neq j(k+1)$ for $1\leq k \leq N$ (with the convention $N+1=1$). Then
	\begin{equation*}
		W(\Varphi)(P_1\cdots P_N)=\sum_{1\leq i\leq n}\beta^{\Varphi}(a_i, a_{i+1}, \cdots , a_n, a_1, \cdots, a_{i-1}, a_i),
	\end{equation*}
	and it remains to prove that, for any $i\in [n]$,
	$$\beta^{\Varphi}(a_i, a_{i+1}, \cdots , a_n , a_1 , \cdots , a_{i-1} , a_i)=0,$$
	This will result from the following Lemma~\ref{lemma:vanishingofbeta}.
\end{proof}
\begin{lemma}\label{lemma:vanishingofbeta} Let~$\mathcal{A}_1$ and $\mathcal{A}_2 \subset \mathcal{A}$ be two conditionally free unital subalgebras relatively to $(\Psi,\Varphi)$.

	Let $a_1,\dots a_n \in \mathcal{A}$ and assume that for a certain interval partition $I=\{I_k,\,1\leq k \leq |I|\}$ of $[n]$ \emph{with at least two blocks},
	\begin{itemize}
		\item for all $i\in I_k$, $a_i\in \mathcal{A}_{j(k)}$ for some  $j(1),\ldots,j(|I|) \in \{1,2\}$,
		\item $\Psi\left(\overrightarrow{\prod}_{i\in I_k}a_i\right)=0$,
		\item $j(k)\neq j(k+1)$ for $1\leq k \leq |I|$ (with the cyclic convention $|I|+1=1$).
	\end{itemize}
	Then, for any $1\leq i\leq j\leq n$,
	\begin{equation*}
		\beta^{\Varphi}(a_j, a_{j+1}, \ldots a_n, a_1,\cdots, a_{i-1}, a_i)=0.\label{eq:useful}
	\end{equation*}
\end{lemma}
\begin{proof}
	Let $1\leq i \leq j \leq n$. Set $r:=n+1-j+i$. Let $a_1,\dots,a_n \in \mathcal{A}$ and set $w = a_j, a_{j+1}, \cdots, a_n, a_1,\cdots , a_{i-1},a_i$.
	We have \cite{arizmendi2015relations}
	\begin{align*}
		\beta^{\Varphi}(w)
		=\sum_{\substack{\pi \in \nc(r) \\ 1\sim_\pi r}}\kappa_\pi^{\Varphi|\Psi}(w)
		=\sum_{\substack{V\subset [r]   \\ \{1,r\}\subset V }}\kappa_V^{\Varphi|\Psi}(w)\cdot \sum_{\substack{\pi \in \nc(r)\\ V\in\pi }}\prod_{ W\in \pi, W\neq V}\kappa_W^{\Psi}(w).
	\end{align*}
	Owing to conditional freeness of the algebras $\mathcal{A}_1$ and $\mathcal{A}_2$, $\kappa_V^{\Varphi|\Psi}(a_j , a_{j+1}, \cdots , a_n , a_1 , \cdots , a_{i-1} , a_i)=0$ whenever the variables do not belongs to the same algebra. So we can restrict the sum to the subset $V$, which connects variables $a_k$'s in the same algebra (let's say $\mathcal{A}_1$, as the other case is similar).
	For a fixed $ V \subset [r]$ containing $1$ and $r$ and connecting $a_k$'s in the same algebra $\mathcal{A}_1$, we split $[r]\setminus V$ in maximal intervals $W_1,\ldots, W_\ell$ in such a way that the sum of the free cumulants yields $\Psi$ on each interval:
	\begin{align*}
		 & \sum_{\substack{\pi \in \nc(r) \\ V\in\pi }}\prod_{W\in \pi, W\neq V}\kappa_W^{\Psi}(w)
		=\prod_{m=1}^\ell\Psi(w_{W_m}).
	\end{align*}
	The product on the right-hand side of the above equality vanishes
	since at least one interval $W_m$ contains a variable in $\mathcal{A}_2$, leading to a term
	$$\Psi(w_{W_m})=\Psi(\underbrace{\vphantom{\overrightarrow{\prod}_{i\in I_{t+1}}a_i}\cdots}_{\in \mathcal{A}_1}\underbrace{\vphantom{\overrightarrow{\prod}_{i\in I_{t+1}}a_i}\overrightarrow{\prod}_{i\in I_t}a_i}_{\in \mathcal{A}_2}\cdot \underbrace{\vphantom{a_{r+1}}\overrightarrow{\prod}_{i\in I_{t+1}}a_i}_{\in \mathcal{A}_1}\cdots \underbrace{\overrightarrow{\prod}_{i\in I_{t+s}}a_i}_{\in \mathcal{A}_2}\underbrace{\vphantom{\overrightarrow{\prod}_{i\in I_{t+1}}a_i}\cdots }_{\in \mathcal{A}_1})=0$$
	by freeness of $\mathcal{A}_1$ from $\mathcal{A}_2$ (since $\Psi$ applied to an alternating product of variables which are centered variables with respect to $\Psi$, except possibly the first and the last one).
\end{proof}
\subsection{Cyclically alternating product of cyclically free variables} \label{sec:cyclicallyalternatingproduct}
In the previous section, we have introduced cumulants for the cyclic free independence. We now prove formulae for these cumulants when the arguments are products of random variables. First, we recall definitions pertaining to \emph{type $B$ free probablity theory} \cite{biane2003non}.

Let $n\geq 1$ be an integer. The set of non-crossing partitions of type $B$ of $[n]$, denoted ${\rm NC}^{(B)}(n)$, is the set of all non-crossing partitions of $\{-1 < -2 < \ldots < -n < 1 < \ldots < n \}$ invariant with respect to the inversion map $x \mapsto -x$: this means that $-B \in \pi$ for any $B\in \pi$.
Following \cite{fevrier2010infinitesimal}, we denote by ${\rm NCZ}(n) \subset \nc^{(B)}(n)$ the set of non-crossing partitions of type $B$ with a \emph{zero block}: a block stable by inversion. This zero block is necessarily unique \cite[Section 1.2]{biane2003non}. The complementary set $\nc^{(B)}(n)\setminus {\rm NCZ}^{(B)}(n)$ is denoted ${\rm NCZ}^{\prime}(n)$:
\begin{equation}
	\nc^{(B)}(n) = {\rm NCZ}(n) \sqcup {\rm NCZ}'(n).
\end{equation}
We let ${\rm Abs}: \nc^{(B)}(n)\to \nc(n)$ be the map that take the image of a non-crossing partition of type $B$ by the absolute map ${\rm Abs}:\{-1,\ldots,n\} \to \{1,\ldots,n\}$. The fibre above a non-crossing partition $\pi$ by the absolute value map is described as follows. The intersection of ${\rm Abs}^{-1}(\pi)$ with ${\rm NCZ}(n)$ is in bijection with $\pi$: each block of $\pi$ yields an element of ${\rm Abs}^{-1}(\pi)\cap {\rm NCZ}(n)$. The intersection of ${\rm Abs}^{-1}(\pi)$ with ${\rm NCZ}^{\prime}(n)$ is in bijection with blocks of the Kreweras complement of $\pi$. We will freely identify a partition $\pi \in {\rm NCZ}(n)$ (resp. $\pi \in {\rm NCZ}'(n)$) with a pair $(\tilde{\pi},O)$ where $\tilde{\pi} = {\rm Abs}(\pi)$ and $O$ is the zero block of $\pi$ (resp. $R$ the block of the Kreweras complement of $\tilde{\pi}$ associated to $\pi$) and write $\pi = (\tilde{\pi},O)$ (resp. $\pi = (\tilde{\pi},R)$).

As before, we have two ways of extending multilinear maps over $\mathcal{A}$ along type $B$ non-crossing partitions.  Given two sequences $f_n,g_n:\mathcal{A}^{\otimes n}$, $n\geq 1$, we set first :
\begin{align}
	\label{eqn:cymultun}
	 & |f,g|^{(B)}_{\pi}(a_1, \cdots , a_n) = g_{|O|}(a_{O})\prod_{V \neq O\,\in \tilde{\pi}} f_{|V|}(a_{V}) &  & \pi=(\tilde{\pi},O) \in {\rm NCZ}(n), \\
	 & |f,g|^{(B)}_{\pi}(a_1, \cdots, a_n) = \prod_{V \in \tilde{\pi}} f_{|V|}(a_{V})                        &  & \pi=(\tilde{\pi},R) \in {\rm NCZ}'(n)
\end{align}
and also
\begin{align}
	\label{eqn:cymultdeuxun}
	 & [f,g]^{(B)}_{\pi}(a_1, \cdots , a_n) = g_{|O|}(a_{O})\prod_{V \neq O\,\in \tilde{\pi}} f_{|V|}(a_{(V,<_O)}) &  & \pi=(\tilde{\pi},O) \in {\rm NCZ}(n),                           \\
	 & [f,g]^{(B)}_{\pi}(a_1, \cdots , a_n) = \prod_{V \in \tilde{\pi}} f_{|V|}(a_{(V,<_R)})                       &  & \pi=(\tilde{\pi},R) \in {\rm NCZ}'(n)	\label{eqn:cymultdeuxdeux}
\end{align}

We notice that while we have the equality $g_n = [f,g]^{(B)}_{(1_n,\{1,\ldots,n\})}$, the values of $f$ on $a_1\otimes\cdots\otimes a_n$ and on the cyclically permuted words of $a_1\otimes\cdots \otimes a_n$ are embodied in $[f,g]^{(B)}$, for any integer $1 \leq i \leq n$:
$$
	[f,g]^{(B)}_{(1_n,\{i\})}(a_1,\cdots,a_n)=f(a_{i},\dots, a_n, a_1,\dots,a_{i-1}).
$$
Following the philosophy adopted in the previous section, we may use a different symbol ${\rm CyNC}^{(B)} = {\rm CyNCZ} \sqcup {\rm CyNCZ}'$ for the set of non-crossing partitions of type $B$, for which we require multiplicative functions (functions that factorises over the blocks of a partition) to fulfil relations \eqref{eqn:cymultdeuxdeux} and $\eqref{eqn:cymultdeuxun}$.

Since a partition $\pi \in {\rm CyNCZ}$ is of the form $\pi = ({\rm Abs}(\pi), O)$ where $O$ is a block of ${\rm Abs}(Kr(\pi))$, we use $\prec_\pi$  for the order $\prec_{O}$ on $[n]\setminus O$ defined in the introduction. Likewise, given a partition $\pi \in {\rm CyNCZ}$, $\pi = ({\rm Abs}(\pi), R)$ where $R$ is a block of ${\rm Kr}({\rm Abs}(\pi))$, we use the notation $\prec_\pi$ for the order $\prec_{R}$ on $[n]$ defined in the introduction.
\begin{remark}
	Going back to the definition of $[\Psi]$, equation \eqref{eqn:definsoulcy}, with the definition of the order $\prec_{\pi}$, $\pi \in {\rm CyNCZ'}(n)$ we just give we see that by inserting the relations expressing the Boolean cumulants in terms of the free cumulants \cite{arizmendi2015relations}, we obtain
	$$
		[\Psi](a_1 \otimes \cdots \otimes a_n)=\sum_{\pi\in{\rm CyNCZ'}} [\kappa^{\Psi},0]^{(B)}_{\pi}(a_1,\ldots,a_n),\quad a_1,\ldots,a_n \in \mathcal{A}.
	$$
\end{remark}
\begin{proposition}
	Let $\pi \in {\rm CyNC}^{(B)}(n)$.
	\begin{itemize}
		\item If $\pi,\pi^{\prime}\in{\rm CyNCZ}(n)$, with $\pi^{\prime} \leq \pi$ (for the containement order), the order $\prec_{\pi^{\prime}}$ is equal to $\prec_{\pi}$ where both are defined.
		\item If $\pi\in{\rm CyNCZ'}(n)$ and $\pi^{\prime} \leq \pi$, the order $<_{\pi^{\prime}}$ is equal to $\prec_{\pi}$.
	\end{itemize}
\end{proposition}
\begin{proof}
	If $\pi \in {\rm CyNCZ}$ then $\pi = ({\rm Abs}(\pi),O)$. If $\pi^{\prime}\leq \pi$ and $\pi^{\prime} \in {\rm CyNCZ}$, then $\pi^{\prime} = (\rm Abs(\pi^{\prime}), O^{\prime})$ with ${\rm Abs}(\pi^{\prime}) \leq {\rm Abs}(\pi)$ and $O^{\prime} \subset O$. Hence, the order $<_{\pi^{\prime}}$ is equal to $\prec_{\pi}$ where both are defined.

	We remark that $\pi^{\prime} \leq \pi$ and $\pi \in {\rm CyNCZ}^{\prime}$ implies $\pi^{\prime} \in {\rm CyNCZ}^{\prime}$ for otherwise a block of $\pi$ would contain a pair $-i,i$. The second point follows from the fact the block $O^{\prime}$ of the Kreweras complement of ${\rm Abs}(\pi^{\prime})$ such that $\pi^{\prime} = ({\rm Abs}(\pi^{\prime}), O^{\prime})$ contains the block $O$ of the Kreweras complement of $\pi$ such that $\pi=({\rm Abs}(\pi),O)$. To prove this we use the fact that ${\rm Abs}(Kr(\pi)) = Kr({\rm Abs}(\pi))$ and that $O = {\rm Abs}(O_{\pi})$, $O^{\prime} = {\rm Abs}(O_{\pi^{\prime}})$ where $O_{\pi}$ and $O_{\pi'}$ are the balanced blocks of the Kreweras complement of $\pi$ and $\pi'$ respectively. Since $\pi^{\prime} \leq \pi$ implies $Kr(\pi) \geq Kr(\pi^{\prime})$, one has $O_{\pi} \subset O_{\pi}^{\prime}$ and thus ${\rm Abs}(O_{\pi}) \subset {\rm Abs}(O_{\pi^{\prime}})$.
\end{proof}
For the remaining part of the section, $D\Psi\colon \mathcal{A}\to\mathbb{C}$ and $D\Psi(1_{\mathcal{A}})=0$ and we recall that the $D\kappa^{\psi}_n,\,n\geq 1$ are the cyclic free cumulants relatively to $\Psi$ (see Definition \ref{def:cycfreeness}).
\begin{proposition} \label{prop:ordertypeB}
	For any $\pi \in {\rm CyNCZ}(n)$ (with a balanced block) and $a_1,\ldots,a_n \in \mathcal{A}$:
	\begin{align*}
		[\Psi, D\Psi]^{(B)}_{\pi}(a_1,\ldots,a_n)=\sum_{\substack{\pi^{\prime} \in {\rm CyNCZ}(n) \\ \pi^{\prime} \leq \pi}}[\kappa^{\Psi},D\kappa^{\Psi}]_{\pi'}^{(B)}(a_1, \dots, a_n).
	\end{align*}
	For any $\pi \in {\rm CyNCZ}^{\prime}(n)$ (without a balanced block) and $a_1,\ldots,a_n \in \mathcal{A}$:
	\begin{align*}
		[\Psi, D\Psi]^{(B)}_{\pi}(a_1,\cdots,a_n)= \sum_{\substack{\pi^{\prime} \in {\rm CyNCZ}^{\prime}(n), \\ \pi^{\prime} \leq \pi}} \kappa^{\Psi}_{\pi^{\prime}}(a_1,\cdots,a_n) = \sum_{\substack{\pi^{\prime} \in {\rm CyNC}^{(B)}(n),\\ \pi^{\prime} \leq \pi}} [\kappa^{\Psi},D\kappa^{\Psi}]^{(B)}_{\pi^{\prime}}(a_1,\cdots,a_n).
	\end{align*}
\end{proposition}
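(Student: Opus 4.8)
The plan is to establish both identities by a block-wise expansion of the defining products \eqref{eqn:cymultdeuxun}--\eqref{eqn:cymultdeuxdeux}, using the two moments-cumulants relations at our disposal — the free one (Definition \ref{def:freecumulants}), $\Psi(a_{(V,<)}) = \sum_{\tau \in \nc(V)} \kappa^{\Psi}_{\tau}(a_{(V,<)})$ applied to any chain, and the cyclic-free one (Definition \ref{def:cyclicinffree}), $\OmegA(a_O) = \sum_{\rho \in \cync(O)} D\kappa^{\Psi}_{\rho}(a_O)$ — followed by a reindexing of the resulting products as sums over the refinements $\pi' \leq \pi$ in the poset of type $B$ non-crossing partitions. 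The only genuinely combinatorial input is the compatibility of this reindexing with the orders $<_O$, $<_R$ and with the splitting $\nc^{(B)} = {\rm NCZ} \sqcup {\rm NCZ}'$.

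Part (b), the ${\rm CyNCZ}'$ case, is the more transparent of the two. Writing $\pi = (\tilde\pi, R)$, the definition gives $[\Psi,\OmegA]_\pi = \prod_{V \in \tilde\pi} \Psi(a_{(V, <_R)})$, and since each $(V, <_R)$ is a chain I expand every factor by the free moments-cumulants relation. Distributing the product turns it into a sum over all choices $(\tau_V)_{V \in \tilde\pi}$ of non-crossing refinements of the individual blocks; each such choice assembles into a unique $\pi' \le \pi$, with $\prod_V \kappa^{\Psi}_{\tau_V} = \kappa^{\Psi}_{\pi'}$, and conversely every $\pi' \le \pi$ arises this way, giving the first equality. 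For the second equality I use that an inversion-stable block cannot be produced by refining a partition without one: if $\pi \in {\rm NCZ}'$ its blocks occur in conjugate pairs $\{V, -V\}$ with $V \cap -V = \emptyset$, so any $\pi' \le \pi$ again lies in ${\rm NCZ}'$. On such $\pi'$ the $g$-slot of $[\kappa^{\Psi}, D\kappa^{\Psi}]_{\pi'}$ is never used and $[\kappa^{\Psi}, D\kappa^{\Psi}]_{\pi'} = \kappa^{\Psi}_{\pi'}$; hence enlarging the index set from ${\rm CyNCZ}'$ to all of ${\rm CyNC}^{(B)}$ adds only vanishing terms and the two sums coincide.

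Part (a), the ${\rm CyNCZ}$ case, follows the same template but now the zero block carries $\OmegA$. Here $[\Psi,\OmegA]_\pi = \OmegA(a_O)\prod_{V \neq O} \Psi(a_{(V, <_O)})$; I expand the non-zero factors by the free relation and the distinguished factor $\OmegA(a_O)$ by the cyclic-free moments-cumulants relation, which already carries the marked-block structure $D\kappa^{\Psi}_\rho(a_O) = \sum_{W \in \rho} D\kappa^{\Psi}_{|W|}(a_W)\prod_{W' \neq W} \kappa^{\Psi}(a_{(W', <_W)})$. Collecting terms produces a sum over the refinements $\pi' \le \pi$ that retain a zero block, i.e. $\pi' \in {\rm CyNCZ}$, each contributing exactly $D\kappa^{\Psi}_{\pi'} = [\kappa^{\Psi}, D\kappa^{\Psi}]_{\pi'}$. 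The bijection underpinning this step is the standard correspondence between type $B$ non-crossing partitions with a zero block and cyclic non-crossing partitions equipped with a marked central block: refining within $O$ while keeping a central block is precisely the data of a pair $(\rho, W)$ with $\rho \in \cync(O)$ and $W \in \rho$ the new zero block, while the remaining blocks refine without constraint.

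The step I expect to require the most care — the \emph{main obstacle} — is checking that the orders match under this reindexing, so that each term of the expansion really equals the corresponding $[\kappa^{\Psi}, D\kappa^{\Psi}]_{\pi'}$. When the zero block shrinks from $O$ to $O' \subseteq O$, the induced order $<_{O'}$ differs from $<_O$ only by making comparable elements lying in distinct $<_O$-sectors; but every non-zero block of a type $B$ partition with zero block is contained in a single sector, so $<_{O'}$ and $<_O$ agree on the arguments of each individual cumulant, and since inter-block comparisons never enter a product of scalars, the two bracket functions evaluate identically. Once this order-compatibility is in place, together with the refinement bijection and the blockwise moments-cumulants relations, both displayed identities follow by collecting terms.
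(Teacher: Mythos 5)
Your proof is correct and follows essentially the same route as the paper's: blockwise expansion of $[\Psi,\OmegA]_\pi$ via the free and cyclic-free moments--cumulants formulas, reindexing over refinements in the type-$B$ poset (with the observation that $\pi'\leq\pi\in{\rm CyNCZ}'$ forces $\pi'\in{\rm CyNCZ}'$), and a sector-containment argument for order compatibility, which is exactly what the paper records as ``$<_{O'}$ coincides with $<_O$ on the complement of $O$'' and as $O_{\pi}\subset O_{\pi'}$ for the Kreweras blocks. The one point worth making explicit is that your order-matching argument, stated for the shrinking zero block, is also needed in the ${\rm CyNCZ}'$ case where the marked Kreweras block \emph{grows} under refinement; the same reasoning (each block of $\pi'$ lies in a single sector, and sectors of the finer marked block are contained in sectors of the coarser one) goes through verbatim.
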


\begin{proof}
	The balanced block of a partition $\pi \in {\rm CyNCZ}$ is the block $O\in \pi$ for which there exists $i\in [n]$ such that $i,-i \in O$. Thus, if $\pi^{\prime} \leq \pi$, then the balanced block $O^{\prime}$ of $\pi^{\prime}$ is contained in the balanced block $O$ of $\pi$. This implies that the order $\prec_{O^{\prime}}$ coincides with $\prec_O$ on the complementary set of $O$ in $[n]$. Hence, for any block $V\neq O \in \pi$:
	$$
		{\Psi}(a_{(V,\prec_O)})=\sum_{\pi_V \in {\rm NC}(|V|)} \kappa^{\Psi}_{\pi}(a_{(V,\prec_{O^{\prime}})}).
	$$
	The first formula follows.
	For the last two inequalities, we use Proposition \ref{prop:ordertypeB}.
\end{proof}
The zeta function of CyNCZ is notated $\zeta_{\rm CyNCZ}:{\rm CyNCZ}^{(2)}\to\mathbb{C}$ and $\mu_{\rm NCZ}:{\rm CyNCZ}^{(2)}\to\mathbb{C}$ is the Moebius function of {\rm NCZ}. By definition, for any $\pi \in {\rm CyNCZ}$:
\begin{align*}
	\sum_{\substack{\pi^{\prime} \in {\rm CyNCZ}, \\ \pi^{\prime} \leq \pi}} \mu_{\rm CyNCZ}(\pi^{\prime},\pi) = \begin{cases} 0 & \text{ if } \quad\pi^{\prime} \neq \pi, \\
              1 & \text{ if }\quad \pi^{\prime} = \pi.
	\end{cases}
\end{align*}
Let $n,p\geq 1$ integers. Let $I = \{I_1,\ldots,I_n\}$ an interval partition of $[p]$. The partition $I$ yields an interval partition $\bar{I}=\{I_1,I_{-1},\ldots,I_n,I_{-n}\}$ in ${\rm NCZ^{\prime}}(p)$ with $I_{-j} = -I_j$ . Given a partition $\pi\in{\rm CyNCZ}(n)$, we denote by $\hat{\pi}$ the partition of ${\rm CyNCZ}(p)$:
$$
	i,j \in \{-1,\ldots,-p,1\ldots,p\} \quad i\sim_{\hat{\pi}} j \Leftrightarrow i \in I_k,~j\in I_l\,{\rm and}\, k\sim_{\pi}l.
$$
Notice that $\pi \vee \bar{I} \in {\rm CyNCZ}$ whenever $\pi \in {\rm CyNCZ}$ and the balanced block is $\bigcup_{k \in O} I_k \cup \bigcup_{k\in O}-I_k$ with $O$ the balanced block of $\pi$.
\begin{proposition}
	\label{prop:product}
	Let $p\geq 1$ be an integer and let $a_1,\ldots,a_p \in \mathcal{A}$. For any non-crossing partition $\pi \in {\rm NCZ}(n)$, $n\geq 2$:
	\begin{equation}
		[\kappa^{\Psi}, D\kappa^{\Psi}]^{(B)}_\pi(a_{I_1},\dots,a_{I_n})=\sum_{\substack{\sigma \in {\rm CyNCZ}(p),\\ \sigma\vee \bar{I}=\hat{\pi}}} [\kappa^{\Psi},D\kappa^{\Psi}]^{(B)}_{\sigma}(a_1,\dots,a_p)
	\end{equation}
\end{proposition}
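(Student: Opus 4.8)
The plan is to transpose the classical Nica--Speicher proof of the product formula for free cumulants to the type $B$ / cyclic lattice ${\rm CyNCZ}$, feeding in two ingredients only: the type $B$ moment--cumulant relation established in the preceding proposition, and the (essentially trivial) fact that a moment of products is again a moment. Concretely, let me write $[\Psi,\OmegA]_\pi$ for the type $B$ moment functional of that proposition, so that over ${\rm CyNCZ}(p)$ one has the triangular relation $[\Psi,\OmegA]_\pi=\sum_{\pi'\leq\pi}D\kappa^{\Psi}_{\pi'}$. Being upper-triangular for the refinement order, this inverts in the incidence algebra of $({\rm CyNCZ}(p),\leq)$ to give $D\kappa^{\Psi}_\pi=\sum_{\pi'\leq\pi}\mu_{\rm CyNCZ}(\pi',\pi)\,[\Psi,\OmegA]_{\pi'}$, where $\mu_{\rm CyNCZ}$ is the Möbius function introduced just above.

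The second input is the moment product formula $[\Psi,\OmegA]_{\pi'}(a_{I_1}\otimes\cdots\otimes a_{I_n})=[\Psi,\OmegA]_{\widehat{\pi'}}(a_1\otimes\cdots\otimes a_p)$. For $\pi'=\mathbb{1}_n$ it is simply $\OmegA(a_{I_1}\cdots a_{I_n})=\OmegA(a_1\cdots a_p)$, and the general case follows by multiplicativity of $[\Psi,\OmegA]$ over the blocks of $\pi'$, noting that the hat operation $\pi'\mapsto\widehat{\pi'}$ sends ${\rm CyNCZ}(n)$ into ${\rm CyNCZ}(p)$ (the zero block is collapsed to a zero block). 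The one genuine subtlety is that blocks are read in the cyclic orders $<_O$ and $<_R$ rather than the natural order; that these orders are compatible with the interval pullback is exactly the Kreweras-inclusion argument of the preceding proposition (using ${\rm Abs}({\sf Kr}(\pi))={\sf Kr}({\rm Abs}(\pi))$), while the cyclic reading of the zero block is harmless because $\OmegA$ is tracial.

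Substituting the product formula and then expanding $[\Psi,\OmegA]_{\widehat{\pi'}}=\sum_{\sigma\leq\widehat{\pi'}}D\kappa^{\Psi}_\sigma$ and exchanging the order of summation gives
\[
D\kappa^{\Psi}_\pi(a_{I_1}\otimes\cdots\otimes a_{I_n})=\sum_{\pi'\leq\pi}\mu_{\rm CyNCZ}(\pi',\pi)\,[\Psi,\OmegA]_{\widehat{\pi'}}(a_1\otimes\cdots\otimes a_p)=\sum_{\sigma}D\kappa^{\Psi}_\sigma(a_1\otimes\cdots\otimes a_p)\sum_{\substack{\pi'\leq\pi\\ \widehat{\pi'}\geq\sigma}}\mu_{\rm CyNCZ}(\pi',\pi).
\]
Here I introduce the collapsing map $\phi$, sending $\sigma\in{\rm CyNCZ}(p)$ to the $[n]$-partition with $k\sim l$ whenever some element of $I_k$ is $\sigma$-connected to some element of $I_l$; it satisfies $\widehat{\phi(\sigma)}=\sigma\vee\bar I$ and $\phi\circ(\,\widehat{\cdot}\,)={\rm id}$, and it maps ${\rm CyNCZ}(p)$ into ${\rm CyNCZ}(n)$. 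Since $\widehat{\pi'}\geq\sigma$ is equivalent to $\pi'\geq\phi(\sigma)$, the inner sum ranges over $\phi(\sigma)\leq\pi'\leq\pi$ and, by the defining property of the Möbius function, equals $1$ when $\phi(\sigma)=\pi$ and $0$ otherwise; and $\phi(\sigma)=\pi$ is equivalent to $\sigma\vee\bar I=\hat\pi$. This isolates exactly $\sum_{\sigma\vee\bar I=\hat\pi}D\kappa^{\Psi}_\sigma(a_1\otimes\cdots\otimes a_p)$, which is the assertion.

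I expect the main obstacle to be the moment product formula together with the cyclic-order bookkeeping: one must verify that refining $\sigma\leq\hat\pi$ never alters the cyclic order attached to any block, which is precisely where the inclusion of balanced/Kreweras blocks and the identity ${\rm Abs}({\sf Kr}(\pi))={\sf Kr}({\rm Abs}(\pi))$ from the preceding proposition enter, and that traciality of $\OmegA$ legitimizes the cyclic reading of the zero block. The lattice facts about $\phi$, $\bar I$ and the hat map (a Galois connection yielding $\widehat{\phi(\sigma)}=\sigma\vee\bar I$), together with the Weisner-type collapse of the inner Möbius sum, are then routine. As an alternative avoiding Möbius inversion, one can differentiate the ordinary free-cumulant product formula with the derivation $D$: the single $D$ lands on one block, which becomes the zero block of $\sigma$, and the balanced-block identity $\bigcup_{k\in O}I_k\cup\bigcup_{k\in O}-I_k$ recorded in the setup shows this placement is compatible with $\sigma\vee\bar I=\hat\pi$; this route, however, still requires the same cyclic-order verification.
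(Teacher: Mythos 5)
Your argument is correct and is essentially the paper's proof run in the opposite direction: the paper expands the right-hand sum $\sum_{\sigma\vee\bar I=\hat\pi}D\kappa^{\Psi}_\sigma$ into the moments $D\Psi_{\pi'}$ and shows, via the convolution identity $\alpha_{\bar I}\star\zeta_{\rm CyNCZ}=\delta$ and the observation that $\sigma\mapsto\sigma\vee\bar I$ has the interval $[\bar I\vee\pi',\chi]$ as image with the relevant sum running over its fibers, that the coefficient of $D\Psi_{\pi'}$ is $\mu_{\rm CyNCZ}(\pi',\hat\pi)$ supported on $\pi'\geq\bar I$, whereas you expand the left-hand side and collapse the inner M\"obius sum over the interval $[\phi(\sigma),\pi]$ directly. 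Since both reductions rest on exactly the same ingredients --- M\"obius inversion on ${\rm CyNCZ}$ from the preceding proposition, the moment product formula $[\Psi,\OmegA]_{\pi'}(a_{I_1}\otimes\cdots\otimes a_{I_n})=[\Psi,\OmegA]_{\widehat{\pi'}}(a_1\otimes\cdots\otimes a_p)$ with its cyclic-order bookkeeping, and the adjunction between joining with $\bar I$ and collapsing along $I$ --- this is the same proof up to the choice of which side of that adjunction carries the Weisner cancellation.
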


\begin{proof} Let $p\geq 1$ be an integer and $a_1,\ldots,a_p \in \mathcal{A}$.
	\begin{align*}
		 & \sum_{\substack{\sigma \in {\rm CyNCZ}(p),                 \\ \sigma\vee \bar{I}=\hat{\pi}}} [\kappa^{\Psi},D\kappa^{\Psi}]^{(B)}_{\sigma}(a_1,\dots,a_p) \\
		 & \hspace{1cm}=\sum_{\substack{\sigma \in {\rm CyNCZ}(p),    \\ \sigma\vee \bar{I}=\hat{\pi}}} \sum_{\substack{\pi^{\prime}\in{\rm CyNCZ}(p),\\\pi^{\prime} \leq \sigma}} \mu_{\rm CyNCZ}(\pi^{\prime},\sigma)  [\Psi,D{\Psi}]^{(B)}_{\pi^{\prime}}(a_1, \dots, a_p) \\
		 & \hspace{1cm}=\sum_{\substack{\pi^{\prime} \in {\rm CyNCZ}, \\\pi^{\prime} \leq \hat{\pi}}} \Big[\sum_{\substack{\sigma \in {\rm CyNCZ}(p),\\ \sigma \geq \pi^{\prime},\\\sigma\vee \bar{I}=\hat{\pi}}} \mu_{\rm Cy NCZ}(\pi^{\prime},\sigma)\Big] [\Psi,D{\Psi}]^{(B)}_{\pi^{\prime}}(a_1, \dots,  a_p).
	\end{align*}
	We set, for any pair of partitions $\pi^{\prime} \leq \hat{\pi}$:
	$$
		\alpha_{\bar{I}}(\pi^{\prime},\hat{\pi}) =  \sum_{\substack{\sigma \in {\rm CyNCZ}(p),\\ \sigma \geq \pi^{\prime},\\\sigma\vee \bar{I}=\hat{\pi}}}\mu_{\rm Cy NCZ}(\pi^{\prime},\sigma)
	$$
	Notice that for $\alpha_I(\pi^{\prime},\hat{\pi})$ not to be zero, we must have $\hat{\pi} \geq \bar{I}\vee \pi^{\prime}$. Assume that $\chi \geq \bar{I}\vee \pi^{\prime}$
	\begin{align*}
		\alpha_{\bar{I}} \star \zeta_{\rm CyNCZ}(\pi^{\prime},\chi)=\sum_{\pi^{\prime} \leq \hat{\pi} \leq \chi} \alpha_I(\pi^{\prime},\hat{\pi}) = \sum_{\bar{I}\vee \pi^{\prime} \leq \hat{\pi} \leq \chi} \sum_{\substack{\sigma \in {\rm CyNCZ}(p), \\ \sigma \geq \pi^{\prime},\\\sigma\vee \bar{I}=\hat{\pi}}}\mu_{\rm Cy NCZ}(\pi^{\prime},\sigma)
	\end{align*}
	The interval $[\bar{I}\vee \pi^{\prime}, \chi]$ is the image of the map $\{\chi \geq \sigma \geq \pi^{\prime}\} \subset {\rm NCZ}\ni \alpha \mapsto \bar{I}\vee \alpha \in {\rm NCZ}$ (which is well-defined) and the second sum is a sum over the fiber of this map at $\hat{\pi}$. Thus,
	$$
		\alpha_{\bar{I}} \star \zeta_{\rm CyNCZ}(\pi^{\prime},\chi) = \delta(\pi^{\prime},\chi).
	$$
	Hence, whenever $\bar{I}\vee \pi^{\prime} > \pi^{\prime}$, one has $\alpha_{\bar{I}} \star \zeta_{\rm NCZ}(\pi^{\prime},\chi) = 0$. Thus $\alpha_{\bar{I}}(\pi^{\prime},\chi)=0$ if $\pi^{\prime} \not\geq \bar{I}$.
	This implies:
	\begin{align*}
		\sum_{\substack{\sigma \in {\rm CyNCZ}(p),           \\ \sigma\vee \bar{I}=\hat{\pi}}} [\kappa^{\Psi},D\kappa^{\Psi}]^{(B)}_{\sigma}(a_1, \dots, a_p) &=
		\sum_{\substack{\pi^{\prime} \in {\rm CyNCZ},        \\ \bar{I} \leq \pi^{\prime} \leq \hat{\pi}}} \alpha(\pi^{\prime},\hat{\pi}) [\Psi,D{\Psi}]^{(B)}_{\pi^{\prime}}(a_1, \dots, a_p) \\
		 & =\sum_{\substack{\pi^{\prime} \in {\rm CyNCZ},    \\\bar{I} \leq \pi^{\prime} \leq \hat{\pi}}} \mu_{\rm CyNCZ}(\pi^{\prime},\hat{\pi}) [\Psi,D{\Psi}]^{(B)}_{\pi^{\prime}}(a_1, \dots, a_p)
		\\
		 & =\sum_{\substack{\pi^{\prime} \in {\rm CyNCZ}(n), \\ \pi^{\prime} \leq \pi}} \mu_{\rm CyNCZ}(\hat{\pi}^{\prime}, \hat{\pi})[\Psi,D{\Psi}]^{(B)}_{\pi^{\prime}}(a_{I_1}, \dots, a_{I_n})
	\end{align*}
	Because $n \not\sim_{\bar{I}} 1$, $D{\Psi}_{\hat{\pi}^{\prime}}(a_1, \dots, a_p)=D\Psi_{\pi^{\prime}}(a_{I_1}, \dots, a_{I_n})$. We are left with proving:
	$$
		\mu_{\rm CyNCZ}(\hat{\pi}^{\prime}, \hat{\pi}) = \mu_{\rm CyNCZ}({\pi}^{\prime}, {\pi}).
	$$
	It comes from the fact that the operator $\hat{~}$ is a monotone bijection between $[\hat{\pi}^{\prime},\hat{\pi}]^{(B)} \subset {\rm CyNCZ}$ and $[{\pi}^{\prime},{\pi}]^{(B)} \subset {\rm CyNCZ}$. The proof is complete.
\end{proof}

The formula is stated for arguments grouped according to an interval partition $I$. The formula in Proposition \ref{prop:product} can be extended to cyclic interval partitions. Suppose $J$ is a cyclic interval partition, let $J_1$ be the block containing $1$, $J_2$ the next block in the lexicographic order and so on. Let $q$ be the first integer such that $c^q(J)$ is an interval partition (where $c$ is the cyclic shift of $[p]$). Then, for any $\pi \in {\rm CyNCZ}(n)$:
$$
	[\kappa^{\Psi},D\kappa^{\Psi}]^{(B)}_{\pi}(a_{(J_1,\prec_R)},\dots,a_{(J_n,\prec_R)}) = [\kappa^{\Psi},D\kappa^{\Psi}]^{(B)}_{\pi}(c^{q}(a)_{c^q(J_1)},\dots,c^{q}(a)_{c^q(J_n)}).
$$
with $R$ the unique block in the Kreweras complement of $J$ which is not a singleton.

We apply Proposition \ref{prop:product} to the right-hand side of the above equality and use the fact that $\sigma \vee c^q(\bar{J}) = \widehat{\pi}$ if and only if $c^{-q}(\sigma) \vee \bar{J} = \widehat{\pi}$ and the cyclic invariance of $[\kappa^{\psi},D\kappa^{\psi}]^{(B)}_{\pi}$ to obtain:
$$
	[\kappa^{\Psi},D\kappa^{\Psi}]^{(B)}_{\pi}(a_{(J_1,\prec_R)},\dots,a_{(J_n,\prec_R)}) = \sum_{\substack{\sigma \in {\rm CyNCZ}(p),\\ \sigma\vee \bar{J}=\hat{\pi}}} [\kappa^{\Psi},D\kappa^{\Psi}]^{(B)}_{\sigma}(a_1,\dots,a_p).
$$
The following proposition is a straightforward consequence of the previous one and the vanishing of mixed cumulants for cyclically free random variables.
\begin{corollary}
	\label{prop:productcy}
	Let $\{a_1,\ldots,a_n \}$ be cyclically free from $\{b_1,\ldots,b_n\}$ with respect to $(\Psi,D \Psi)$. The following formula holds:
	\begin{equation*}
		D\kappa_n^{\Psi}(a_1b_1,\cdots, a_nb_n)= \sum_{\pi \in {\rm CyNC^{(B)}}(n)} [\kappa^{\Psi},D\kappa^{\Psi}]^{(B)}_{\pi}(a_1,\dots, a_n)[\kappa^{\Psi},D\kappa^{\Psi}]_{Kr(\pi)}^{(B)}(b_1,\dots, b_n).
	\end{equation*}
\end{corollary}

\begin{remark}
	If $\{a_1,\ldots,a_n\}$ is infinitesimally free from $\{b_1,\ldots,b_n\}$ relatively to $(\Psi,\partial \Psi)$, \cite[Theorem 6.4]{fevrier2010infinitesimal} yields
	\begin{equation*}
		\partial \kappa_n^{\Psi}(a_1b_1,\dots, a_nb_n)= \sum_{\pi \in {\rm NC^{(B)}}(n)} |\kappa^{\Psi},\partial\kappa^{\Psi}|_{\pi}^{(B)}(a_1,\cdots, a_n)|\kappa^{\Psi},\partial\kappa^{\Psi}|_{Kr(\pi)}^{(B)}(b_1,\cdots, b_n).
	\end{equation*}
\end{remark}

\begin{remark}
	In the more general context of cyclic freeness relatively to a couple $(\psi,\OmegA)$ with $\omega(1_{\mathcal{A}})\neq 0$, one can define the cyclic free cumulants $D\kappa_n^{\psi,\OmegA}$ as in Definition \ref{def:cycfreeness} and the formula from Proposition \ref{prop:ordertypeB} becomes, for any $\pi \in {\rm CyNCZ}(n)$:
	\begin{multline}
		[\Psi, \OmegA]^{(B)}_{\pi}(a_1,\ldots,a_n) + \OmegA(1_{\mathcal{A}})[\Psi, [\Psi]]^{(B)}_{\pi}(a_1,\ldots,a_n) \\
		= \sum_{\substack{\pi^{\prime} \in {\rm CyNCZ}(n) \\ \pi^{\prime} \leq \pi}}[\kappa^{\Psi},D\kappa^{\Psi,\OmegA}]_{\pi'}^{(B)}(a_1, \dots, a_n).
	\end{multline}
	while the other formulae in Proposition \ref{prop:product} and $\ref{prop:productcy}$ remains unchanged upon replacing the $D\kappa^{\Psi}$ with $D\kappa^{\Psi,\OmegA}$.
\end{remark}

\section{Cyclic conditional freeness} \label{sec:cyclicconditionalfreeness}
Let us begin by recalling how cyclic conditional freeness relates to the other cyclic independences. In an augmented unital algebra $\mathcal{A}=\mathbb{C}1_{\mathcal{A}}\oplus \mathcal{A}^0)$
with $\mathcal{A}^0$ a two-sided ideal, equipped with a unital linear functional $\Varphi$ and a tracial linear functional $\OmegA$, two subalgebras $\mathcal{A}_1^0,\mathcal{A}_2^0 \subset {\mathcal{I}}$ are \emph{cyclic Boolean independent} \cite{arizmendi2022cyclic} if
\begin{itemize}
	\item the algebra $\mathcal{A}_1^0$ and $\mathcal{A}_2^0$ are Boolean independent in $(\mathcal{A},\Varphi)$,
	\item for any cyclically alternating sequence of elements $a_1,\ldots,a_n$ in $\mathcal{A}_1^0\cup\mathcal{A}_2^0$,
	      $(n\geq 2)$,
	      $$\OmegA(a_1\cdots a_n)=\Varphi(a_1\cdots a_n).$$
\end{itemize}
Two subalgebras $\mathcal{A}_1\subset \mathcal{I},\mathcal{A}_2 $ are \emph{cyclic monotone independent \cite{collins2018free, arizmendi2022cyclic}} if
\begin{itemize}
	\item the algebras $\mathcal{A}_1$ and $\mathcal{A}_2$ are monotone independent in $(\mathcal{A},\Varphi)$,
	\item for any alternating product $(b_0,a_1,b_1\ldots,a_n,b_n)$ on elements in $\mathcal{A}_1\cup\mathcal{A}_2$, $a's\in\mathcal{A}_1$ and $b's \in \mathcal{A}_2$,
	      $$
		      \OmegA(b_0a_1\cdots a_nb_n)=\OmegA(a_1\cdots a_n)\Varphi(b_1)\cdots \Varphi(b_{n-1})\Varphi(b_nb_0)
	      $$
\end{itemize}
\begin{proposition}[Proposition 3.2.1, 3.2.2 and 3.2.4 of \cite{cebron2022asymptotic}]
	Suppose that $\mathcal{A}=\mathbb{C}1_{\mathcal{A}}\oplus \mathcal{A}^0$ is an augmented unital algebra with $\mathcal{A}^0$ a two-sided ideal. Then,
	\begin{itemize}
		\item If $\Psi(a_1)=\Psi(a_2)=0$ for any $a_1 \in \mathcal{A}_1^0,\,a_2 \in \mathcal{A}_2^0$, then $\mathcal{A}_1^0$ and $\mathcal{A}_2^0$ are cyclic Boolean independent with respect to $(\Varphi, \OmegA)$.
		\item  If $\Psi(a_1)=0,\Psi(a_2)=\Varphi(a_2)$ for any $a_1\in \mathcal{A}_1^0,\, a_2\in\mathcal{A}_2$  then $\mathcal{A}_1^0$ and $\mathcal{A}_2$ are cyclic monotone independent with respect to $(\Varphi, \OmegA)$.
		\item  If $\Psi(a_1)=\Varphi(a_1)$ and $\Psi(a_2)=\Varphi(a_2)$ for any $a_1\in \mathcal{A}_1,\,a_2\in\mathcal{A}_2$ then $\mathcal{A}_1$ and $\mathcal{A}_2$ are cyclically free with respect to $(\Varphi, \OmegA)$.
	\end{itemize}
\end{proposition}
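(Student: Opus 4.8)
The plan is to treat the three bullets in parallel, noting that each asserts two things at once: a first-level independence with respect to $\Varphi$ (Boolean, monotone, free, respectively) and a cyclic moment identity relating $\OmegA$ to $\Varphi$. For every item the first-level statement is just the classical degeneration of conditional freeness under the stated hypotheses on $\Psi$ — the black arrows of Figure~\ref{fig:Figureone}, cf.\ \cite{franz2005multiplicative,bozejko1996convolution} — while the cyclic identity will be extracted by combining condition \eqref{eqn:cdnw} of cyclic-conditional freeness with the first-level moment formula just obtained. Throughout I would use that every word in $\mathcal{A}_1^0\cup\mathcal{A}_2^0$ is automatically $\Psi$-centred (since $\Psi$ vanishes there) in the Boolean case, and that $\Psi=\Varphi$ makes ``$\Psi$-centred'' and ``$\Varphi$-centred'' coincide on the relevant subalgebras in the free case.

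The Boolean and free items are then immediate. For Boolean independence, an alternating word $(a_1,\dots,a_n)$ in $\mathcal{A}_1^0\cup\mathcal{A}_2^0$ is $\Psi$-centred, so the $\Varphi$-half of conditional freeness gives $\Varphi(a_1\cdots a_n)=\Varphi(a_1)\cdots\Varphi(a_n)$, which is Boolean independence in $(\mathcal{A},\Varphi)$; for a cyclically alternating such word, \eqref{eqn:cdnw} gives $\OmegA(a_1\cdots a_n)=\Varphi(a_1)\cdots\Varphi(a_n)$, and combining the two yields $\OmegA(a_1\cdots a_n)=\Varphi(a_1\cdots a_n)$, the cyclic Boolean condition. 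For the free item, under $\Psi=\Varphi$ on $\mathcal{A}_1$ and $\mathcal{A}_2$ the $\Varphi$-half of conditional freeness reads $\Varphi(a_1\cdots a_n)=\prod_i\Varphi(a_i)=0$ on $\Varphi$-centred alternating words, i.e.\ freeness with respect to $\Varphi$; and for a $\Varphi$-centred (hence $\Psi$-centred) cyclically alternating word, \eqref{eqn:cdnw} gives $\OmegA(a_1\cdots a_n)=\prod_i\Varphi(a_i)=0$, which is exactly the requirement of Definition~\ref{def:cyclicfreeness}.

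The monotone item is where the work lies. Ordinary monotone independence of $\mathcal{A}_1^0$ from $\mathcal{A}_2$ with respect to $\Varphi$ (Definition~\ref{def:monotoneindependence}) is once more the classical degeneration of conditional freeness when $\Psi$ is trivial on the inner algebra and equals $\Varphi$ on the outer one. For the cyclic identity I would first invoke traciality of $\OmegA$ to rewrite $\OmegA(b_0a_1b_1\cdots a_nb_n)=\OmegA(a_1b_1\cdots a_n(b_nb_0))$, a word of length $2n$ whose $\mathcal{A}_2$-letters are $b_1,\dots,b_{n-1}$ together with the merged letter $b_nb_0$. Each $\mathcal{A}_2$-letter $b$ splits as $b=\mathring b+\Varphi(b)1$ with $\mathring b:=b-\Varphi(b)1$ which is $\Psi$-centred, since $\Psi=\Varphi$ on $\mathcal{A}_2$ (for the wrapped letter one uses $\Psi(b_nb_0)=\Varphi(b_nb_0)$). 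Expanding multilinearly produces one term per subset of $\mathcal{A}_2$-letters retained in centred form. The all-scalar term pulls out $\Varphi(b_1)\cdots\Varphi(b_{n-1})\Varphi(b_nb_0)$ and leaves $\OmegA(a_1\cdots a_n)$, which is the claimed right-hand side. It then remains to show that every other term vanishes: after deleting the scalarised $\mathcal{A}_2$-letters the neighbouring $a$'s merge into a single element of the subalgebra $\mathcal{A}_1^0$ (still $\Psi$-centred), so one is left with $\OmegA$ of an alternating word of $\Psi$-centred letters containing at least one factor $\mathring b$. If that word is cyclically alternating, \eqref{eqn:cdnw} converts it into a product of $\Varphi$'s containing the factor $\Varphi(\mathring b)=0$; if instead it begins and ends in $\mathcal{A}_1^0$, a further cyclic rotation (traciality) merges the two end $\mathcal{A}_1^0$-blocks into one, bringing it into the cyclically alternating situation, where the same $\Varphi(\mathring b)=0$ factor kills it.

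The main obstacle is precisely this last bookkeeping in the monotone case: one must verify that, whatever nonempty subset of centred $\mathcal{A}_2$-letters survives, the reduced word can always be brought (via traciality of $\OmegA$) into the scope of \eqref{eqn:cdnw}, that merged runs of $a$'s remain inside $\mathcal{A}_1^0$ and hence $\Psi$-centred, and that the surviving $\mathring b$ factor indeed forces the cyclic product of $\Varphi$'s to vanish. By contrast, deriving the ordinary $\Varphi$-monotone factorisation from conditional freeness — where the tracial trick is unavailable because $\Varphi$ need not be tracial — is the one place where I would rely on the known degeneration result rather than reprove it; the Boolean and free items require no such care and follow directly from \eqref{eqn:cdnw} and the $\Varphi$-half of conditional freeness.
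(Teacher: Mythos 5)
Your argument is correct. Note first that the paper itself does not prove this proposition: it is imported verbatim from Propositions 3.2.1, 3.2.2 and 3.2.4 of \cite{cebron2022asymptotic}, so there is no internal proof to compare against; your write-up is a self-contained derivation of what the paper only cites. The Boolean and free bullets are handled exactly as one would hope: since $\Psi$ vanishes on $\mathcal{A}_1^0\cup\mathcal{A}_2^0$ (resp.\ coincides with $\Varphi$ on each subalgebra), every alternating word is automatically $\Psi$-centred (resp.\ $\Psi$-centring and $\Varphi$-centring coincide), and the two halves of conditional freeness together with \eqref{eqn:cdnw} give both the first-level independence and the cyclic moment identity. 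The monotone bullet is where a proof is genuinely needed, and your bookkeeping goes through: after the rotation $\OmegA(b_0a_1\cdots a_nb_n)=\OmegA(a_1b_1\cdots a_n(b_nb_0))$, splitting each $\mathcal{A}_2$-letter as $\mathring b+\Varphi(b)1$ (legitimate because $\Psi=\Varphi$ on $\mathcal{A}_2$, including on the merged letter $b_nb_0$) isolates the all-scalar term $\OmegA(a_1\cdots a_n)\Varphi(b_1)\cdots\Varphi(b_{n-1})\Varphi(b_nb_0)$, and every other term reduces, after merging runs of $a$'s inside the non-unital subalgebra $\mathcal{A}_1^0$ (hence still $\Psi$-centred) and, if necessary, one further cyclic rotation to fuse the two boundary $\mathcal{A}_1^0$-blocks, to a cyclically alternating $\Psi$-centred word of length at least two; \eqref{eqn:cdnw} then produces a factor $\Varphi(\mathring b)=0$. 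Delegating the first-level degenerations (conditional freeness $\Rightarrow$ Boolean/monotone/free under the stated constraints on $\Psi$) to \cite{franz2005multiplicative,bozejko1996convolution} is reasonable, since that is classical and orthogonal to the cyclic content of the statement.
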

\subsection{Proof of Theorem \ref{th:fromfreetocyclic}} \label{sec:conditionaltocyclicconditional}
We are now in position to prove Theorem \ref{th:fromfreetocyclic}.
A generic element in $T(\mathcal{A})$ will be denoted $\overset{\rightarrow}{a}$.
Set $\tilde{\Varphi}:=[\Varphi]-\Varphi$. It is clear that $[\Varphi]$ is tracial; to prove cyclic conditional freeness it is sufficient to prove that, for any cyclically alternating sequence $\vec{a}_1,\ldots,\vec{a}_N$ of elements in $T(\mathcal{A}_1)\cup T(\mathcal{A}_2)$ centered with respect to $\Psi$,
$$
	[\Varphi](\vec{a}_1\otimes \cdots \otimes \vec{a}_N) =\Varphi(\vec{a}_1\otimes \cdots \otimes \vec{a}_N),$$
or equivalently, that
$$
	\tilde{\Varphi}(\vec{a}_1\otimes \cdots \otimes \vec{a}_N) =0.$$
The linear functional $\tilde{\Varphi}$ descends to a linear functional on the quotient of $T(A)$ by the ideal generated by the elements
$$
	\vec{a}\otimes 1_\mathcal{A}\otimes \vec{b }-\vec{a}\otimes \vec{b},\quad \vec{a},\vec{b}\in T(\mathcal{A}).
$$
We use again the symbol $\tilde{\Varphi}$ for this linear functional.
By linearity, we can assume that each vector $\vec{a}_k$ is a pure tensor (not containing $1_{\mathcal{A}}$ as a factor) in some $\mathcal{A}_{j(k)}^{\otimes n_k}$, centred with respect to $\Psi$.
We may write:
$$
	\vec{a}_1\otimes \cdots \otimes \vec{a}_N=x_1\otimes \cdots \otimes x_n,
$$
with $\vec{a}_k=\otimes_{i\in I_k}x_i$ for a certain interval partition $[n]=I_1\cup \cdots \cup I_N$, $n\geq 2$,  chosen in such a way so that $i\in I_k \Rightarrow x_i\in \mathcal{A}_{j(k)}$ with $j(1),\ldots,j(k)\in \{1,2\}$ and $j(k)\neq j(k+1)$ for $1\leq k \leq N$ .
With the convention $i_{N+1}=i_1$ and indices written modulo $n$, we observe that
$$
	\Varphi(x_1\cdots x_n)=\sum_{\substack{1 \leq \ell \leq n \\ 1= i(1)<i(2)\ldots <i(\ell)\leq  n}}\ \ \prod_{k=1}^\ell\beta^{\Varphi}(x_{i(k)}, x_{i(k)+1}, \dots , x_{i(k+1)-1}).
$$
Note that $i(1)=1$ in the above sum. From this equality, we infer
\begin{align*}
	\tilde{\Varphi}(\vec{a}_1\otimes \cdots \otimes \vec{a}_N)=\tilde{\Varphi}(x_1\otimes \ldots \otimes x_n)
	 & =[\Varphi](x_1\otimes \ldots \otimes x_n)-\Varphi(x_1\cdots x_n)                                                 \\
	 & =\sum_{\substack{1 \leq \ell \leq n-1                                                                            \\ 1< i(1)<i(2)\ldots <i(\ell)\leq n}}\ \ \prod_{k=1}^\ell\beta^{\Varphi}(x_{i(k)}, x_{i(k)+1}, \ldots,  x_{i(k+1)-1}) \\
	 & =\sum_{\substack{1<i(1)< i(2)<n}}\Varphi(x_{i(1)}\cdots x_{i(2)-1}) \beta^\Varphi(x_{i(2)}, \ldots , x_{i(1)-1}) \\
	 & =\sum_{1\leq i<j \leq n}\beta^\Varphi(\underline{x_{j}, x_{j+1}}, \ldots,x_{i})\Varphi(x_{i+1}\cdots x_{j-1}),
\end{align*}
The result follows from Lemma~\ref{lemma:vanishingofbeta}.

\subsection{Proof of Theorem \ref{th:vanishing_of_mixed_cumulants}} \label{sec:prooftheoremvanishingmixedcumulants}
Under the notations of Theorem \ref{th:vanishing_of_mixed_cumulants}, if $\mathcal{A}_1$ and $\mathcal{A}_2$ are cyclically conditionally free then $T(\mathcal{A}_1)$ and $T(\mathcal{A}_2)$ are cyclically conditionally free in $T(\mathcal{A})$ relatively to the triple $(\Psi,\Varphi,[\Varphi])$. The algebras $T(\mathcal{A}_1)$ and $T(\mathcal{A}_2)$ are also cyclically conditionally free relatively to (the extension of) $\Psi,\Varphi,\OmegA$.
Now, since the moment conditions for cyclic conditional freeness involving $\OmegA$ are linear in $\OmegA$, we see that $T(\mathcal{A}_1)$ and $T(\mathcal{A}_2)$ are cyclically free relatively to $(\Psi,\Phi,\OmegA-[\phi])$ and further, from Corollary \ref{lemma:fromfreetocyclic}, $T(A_1)$ and $T(A_2)$ are cyclically free with respect to
\begin{equation}
	\label{eqn:D}
	D^{\Psi,\Varphi}\OmegA = \OmegA - [\Varphi] - (\OmegA(1_\mathcal{A})-1)[\Psi].
\end{equation}
The $n^{th}$ cyclic conditional cumulant $\cck$ is the restriction to $\mathcal{A}^{\otimes n} \subset T(\mathcal{A})^{\otimes n}$ of the $n^{th}$ cyclic free  cumulant $D\kappa_n^{\Psi, D^{\Psi,\Varphi}\OmegA}$ defined in Definition~\ref{def:cycfreecumulants} relatively to the pair $(\Psi, D^{\Psi,\Varphi}\OmegA)$. In particular, the equivalence between (2) and (3) is a consequence of Theorem~\ref{th:vanishingcyclicinf}. It remains to prove the equivalence between (1) and (2).

$(1)\Rightarrow (2)$. We have already proved this implication.

$(2)\Rightarrow (1)$. Let us denote by $\mathcal{B}$ the algebra generated by $\mathcal{A}_1$ and $\mathcal{A}_2$ in $\mathcal{A}\subset T(A)$, and by $\mathcal{C}$ the algebra generated by $\mathcal{A}_1$ and $\mathcal{A}_2$ in $T(\mathcal{A})$. Using \eqref{eqn:D}, $\OmegA$ is completely determined on $\mathcal{B}$ by $(\Psi,\Varphi,D^{\Psi,\Varphi}\OmegA|_{\mathcal{C}})$. On the other hand, whenever $\mathcal{A}_1$ and $\mathcal{A}_2$ are cyclically free in $(T(\mathcal{A}),\Psi, D^{\Psi,\Varphi}\OmegA)$, the linear map $D^{\Psi,\Varphi}\OmegA$ is completely determined on $\mathcal{C}$ by the pairs $(\Psi|_{T(\mathcal{A}_1)},D^{\Psi,\Varphi}\OmegA|_{T(\mathcal{A}_1)})$ and $(\Psi|_{T(\mathcal{A}_2)},D^{\Psi,\Varphi}\OmegA|_{T(\mathcal{A}_2)})$, or equivalently by $(\Psi|_{\mathcal{A}_1},\Varphi|_{\mathcal{A}_1},\OmegA|_{\mathcal{A}_1})$ and $(\Psi|_{\mathcal{A}_2},\Varphi|_{\mathcal{A}_2},\OmegA|_{\mathcal{A}_2})$. Consequently, the linear functional $\OmegA$ is completely determined on $\mathcal{B}$ by $(\Psi|_{\mathcal{A}_1},\Varphi|_{\mathcal{A}_1},\OmegA|_{\mathcal{A}_1})$ and $(\Psi|_{\mathcal{A}_2},\Varphi|_{\mathcal{A}_2},\OmegA|_{\mathcal{A}_2})$. As wanted, whenever $\mathcal{A}_1$ and $\mathcal{A}_2$ are cyclically free in $(T(\mathcal{A}),\Psi, D^{\Psi,\Varphi}\OmegA)$, the joint distribution of $\mathcal{A}_1$ and $\mathcal{A}_2$ with respect to $(\Psi,\Varphi,\OmegA)$ is the same as two cyclically conditionally free algebras.
\subsection{Cumulants for cyclic Boolean independence}
\label{sec:cyclicBoolean}
Cyclic Boolean independence between the ideals $\mathcal{A}_1^0$ and $\mathcal{A}_2^0$ is equivalent to cyclic conditional freeness between the ideals $\mathcal{A}_1^0\oplus \mathbb{C}$ and $\mathcal{A}_2^0\oplus \mathbb{C}$ when $\mathcal{A}_1^0,\mathcal{A}_2^0\subset \ker(\Psi)$. For any $a_1,\ldots,a_n \in \mathcal{A}^0 \subset {\rm ker}(\Psi)$, the moment-cumulant relation defining the cyclic conditional cumulants yield
\begin{equation}
	\label{eqn:ckboolean}
	\OmegA(a_1 \cdots  a_n)=\sum_{1\leq i_1<\ldots <i_\ell\leq n}\ \ \prod_{k=1}^\ell\beta^{\Varphi }(a_{i_k}, \dots, a_{i_{k+1}-1})
	+\cck(a_1,\dots,a_n).
\end{equation}
If defining, for any $n\geq 1$ and $a_1,\ldots,a_n \in \mathcal{A}^0$,
$$
	c^{\Varphi,\OmegA}_n(a_1,\ldots, a_n):= \cck(a_1,\ldots,a_n)+\sum_{i=1}^n\beta^\Varphi(a_i, a_{i+1},\ldots, a_n, a_1,\ldots, a_{i-1}),
$$
the equation \eqref{eqn:ckboolean} yields
\begin{equation} \label{eqn:momentcumulantsboolean}
	\OmegA(a_1 \cdots  a_n)=c^{\Varphi,\OmegA}_n(a_1, \ldots, a_n)+\sum_{\substack{\ell \geq 2 \\ 1\leq i_1<\ldots <i_\ell\leq n}}\ \ \prod_{k=1}^\ell\beta^{\Varphi }(a_{i_k},\dots, a_{i_{k+1}-1}),
\end{equation}
which means that $c_n, n\geq 1$ are the cyclic Boolean cumulants \cite{arizmendi2022cyclic}.
We now state a formula for the cyclic Boolean cumulants of the pair $(\Psi,[\Psi])$ in terms of the Boolean cumulants of $\Psi$.
To state the following proposition, we will introduce new notations to distinguish $\Psi$ from its extension over $T(\mathcal{A})$:
$$
	\vec{\Psi}: T(\mathcal{A})\to\mathbb{C},\quad \vec{\Psi}(a_1\otimes\cdots\otimes a_n)=\Psi(a_1\cdots a_n).
$$
Let $N\geq 1$.
We use $I[i_1,\ldots,i_p]$, where $1 \leq i_1 < \cdots < i_p \leq N$, for the partition into intervals $\{ \llbracket i_j,i_{j+1} \llbracket,~ j\in \{0,\ldots,p\}\}$ with the convention $i_0 =1$, $i_{p+1}=N$.
Given $\vec{x}_i \in T(A)$ pure tensors, we denote $I[\vec{x}_1,\ldots,\vec{x}_p]$ the partition $I[i_1,\ldots,i_p]$ where $i_j=|\vec{x}_1| + \cdots + |\vec{x}_j|+1$.
The Boolean cumulants of $\vec{\Psi}$ are easily seen to satisfy (see for example \cite{gilliers2026bigraph}), for any $\vec{x}_1,\ldots,\vec{x}_p \in T(\mathcal{A}^0)$:
$$
	\beta_{J}^{\vec{\Psi}}(\vec{x}_1, \dots, \vec{x}_p)=\sum_{\substack{I \in {\rm Int}([N]):\\ I\vee I[\vec{x}_1,\ldots,\vec{x}_p]=\hat{J}}}\prod_{\{i_1,\ldots,i_k\}\in I}\beta^{\Psi}(a_{i_1},\ldots,a_{i_k}).
$$
for any interval partition $J$ of $\{1,\ldots,p\}$,
where $x_1\otimes\cdots\otimes x_p = a_1\otimes\cdots\otimes a_N$, $a_i \in \mathcal{A}$ and $x_i \in T(\mathcal{A})$ and $\hat{J}$ is the partition on $\{1,\ldots,N\}$ induced by $J$ ($i \sim_{\hat{J}} j$, $i,j\in\{1,\ldots,N\}$ if and only if $i,j$ belongs to two blocks of $I[\vec{x}_1,\ldots,\vec{x}_p]$ equivalent under $J$).

If instead $J$ is a partitions into a \emph{cyclic} intervals, we can rotate it by applying to $J$ a power $q$ of the cyclic shift $c=(1,\ldots,p)$ to obtain an interval partition $\tilde{J}$. Since $\beta_{\tilde{J}}(\vec{x}_{c^q(1)}, \cdots , \vec{x}_{c^q(p)})=\beta_{J}(\vec{x}_1,\cdots ,\vec{x}_p)$, we obtain that, for any cyclic interval partition $J$ of $\{1,\ldots,p\}$:
\begin{equation} \label{eqn:productcycumulantsboolean}
	\beta_{J}^{\vec{\Psi}}(\vec{x}_1,\dots, \vec{x}_p)=\sum_{\substack{I \in {\rm CyInt}([N]):\\ I\vee I[\vec{x}_1,\ldots,\vec{x}_p]=\hat{J}}}\prod_{\{i_1,\ldots,i_k\}\in I}\beta^{\Psi}(a_{i_1},\ldots,a_{i_k}).
\end{equation}
Given a cyclic interval partition $I$ with at least two blocks, we write
$$
	\beta^{\Psi}_I(a_1,\cdots, a_n)= \prod_{\{i_1 \prec_R\ldots \prec_R i_k\} \in I}\beta^{\Psi}(a_{i_1},\ldots,a_{i_k})
$$
where $R$ is the only non-trivial block in the Kreweras complement of $I$. With this notation, equation \eqref{eqn:momentcumulantsboolean} reads
$$
	\OmegA(a_1 \cdots  a_n)=c^{\Varphi,\OmegA}_n(a_1, \ldots, a_n)+\sum_{I \neq \mathbf{1}_n \in {\rm CyInt}(n)}\beta^{\Psi}_I(a_1,\cdots, a_n),\quad a_1,\ldots,a_n \in \mathcal{A}^0.
$$

\begin{proposition}
	The cyclic Boolean cumulants $c_n^{(\Psi,[\Psi])}$,~$n\geq 1$ of the pair $(\Psi, [\Psi])$ satisfy
	\begin{equation}
		c_n^{\Psi, [\Psi]}(\vec{x}_1, \dots, \vec{x}_p)= \sum_{i=1}^n \beta^{\Psi}_n(a_i,\cdots, a_N , a_1 , \cdots , a_{i-1})+\!\!\!\!\!\!\!\!\!\!\sum_{\substack{I \in{\rm CyInt}(N)\\I \vee I[\vec{x}_1,\ldots,\vec{x}_p] = \mathbf{1}_N \\|I|\geq 2}}\!\!\!\!\!\!\!\!\!\!\!\!\!\! \beta_I^{\Psi}(a_1 ,\dots, a_N)
	\end{equation}
	for any $\vec{x_1},\ldots,\vec{x_N} \in T(\mathcal{A}^0)$.
	In particular, for any $n\geq 1$ and $a_1,\ldots,a_n \in \mathcal{A}^0$ ($x_1 = a_1 \otimes \cdots \otimes a_n$ in the previous equality):
	$$
		c_n^{(\Psi,[\Psi])}(a_1,\dots, a_n)=\sum_{i=1}^n \beta^{\Psi}_n(a_i,\dots, a_n , a_1 , \dots , a_{i-1}).
	$$
\end{proposition}
\begin{proof}
	Under the notation of the proposition, we have
	$$
		[\Psi](\vec{x}_1 \cdots \vec{x}_p)=\sum_{i=1}^n \beta^{\Psi}_n(a_i,\dots, a_n , a_1 , \dots , a_{i-1}) + \sum_{I \in {\rm CyInt}([N])} \beta_I^{\Psi}(a_1,\ldots,a_N)
	$$
	The proof follows by dividing the sum over cyclic interval partitions in the right-hand side of the above equation for $[\Psi](\vec{x}_1,\ldots,\vec{x}_p)$ into two parts: the contributions from the partition $\mathbf{1}_N$ and the contributions from non-trivial partitions and using the relations \eqref{eqn:productcycumulantsboolean}.
\end{proof}

\section{Convolutions and limit theorems}
\label{sec:convolutions and limit theorems}
\subsection{Cyclic conditional additive convolution}
\label{sec:cyclicconditionaladditiveconvolution}
In this section, we give functional equations between (formal) Cauchy transforms for computing the distribution of the sum of cyclically conditionally free random variables.
We denote by $\mathbb{C}((z))$ the space of formal Laurent series in $z$.
Given $a \in \mathcal{A}$, we define the (formal) Cauchy transform of the distribution of $a$ as $G_a^{\Psi} \in \mathbb{C}((z))$ (resp. $G_a^{\Varphi}\in \mathbb{C}((z))$) relatively to $\Psi$ (resp. relatively to the state $\Varphi$) by,
$$
	G_a^{\Psi}(z)=\frac{1}{z}+\sum_{n=1}^\infty \frac{\Psi(a^n)}{z^{n+1}} \in\mathbb{C}((z)),\quad (\textrm{resp. } G^{\Varphi}_a(z)=\frac{1}{z}+\sum_{n=1}^\infty \frac{\Varphi(a^n)}{z^{n+1}}) \in\mathbb{C}((z))).
$$
The Cauchy transform of $a$ with respect to $\OmegA$ is
$$
	G^{\OmegA}_a(z)=\frac{\OmegA(1_{\mathcal{A}})}{z} + \sum_{n\geq 1} \frac{\OmegA(a^{n})}{z^{n+1}} \in \mathbb{C}((z)).
$$
Lemma \ref{lem:mk} confirms that $[\Psi]$ can be considered as a multivariate extension of the inverse Markov--Krein transform. This is a consequence of Proposition \ref{prop:MKt} and the formula defining the Markov--Krein transform, which can be found in \cite{kerov1997interlacing}. Nevertheless, we provide a proof, by deriving the relation between  Cauchy transforms from the combinatorial definition we gave of $[\Varphi]$.
\begin{lemma}
	\label{lem:mk}
	Let $a \in \mathcal{A}$. The Cauchy transform of $a$ with respect to $[\Psi]$ is related to the Cauchy transform of $a$ with respect to $\Psi$ by,
	\begin{equation}
		\label{eqn:cauchyw}
		G^{[\Psi]}_a(z)=-\frac{(G_a^{\Psi}(z))'}{G_a^{\Psi}(z)},
	\end{equation}
	where $G_a^{[\Psi]}(z)=\frac{1}{z}+\sum_{n=1}^\infty \frac{[\Psi](a^{\otimes n})}{z^{n+1}}$.
\end{lemma}
\begin{remark} We define the moment generating function of $a$ with respect to $[\Psi]$ by,
	$$
		M^{[\Psi]}(z) = \sum_{n \geq 1} [\Psi](a^{\otimes n })z^n
	$$
	Since $M^{[\Psi]}(z)=\frac{1}{z}G^{[\Psi]}(\frac{1}{z})-1$, the above relation between the Cauchy transforms of $a$ relatively to $[\Psi]$ and relatively to $\Psi$ yields:
	$$
		M^{[\Psi]}(z)=z\frac{(M^{\Psi})'(z)}{1+M^{\Psi}(z)}.
	$$
	In fact,
	\begin{align*}
		G^{[\Psi]}_a(z)=-{\rm \ln}(G^{\Psi}_a)'(z)=-\frac{d}{dz}{\rm \ln}(\frac{1}{z}(M^{\Psi}(\frac{1}{z})+1))=\frac{1}{z}+\frac{(M^{\Psi})^{\prime}(\frac{1}{z})}{z^2(1+M^{\Psi}(\frac{1}{z}))}
	\end{align*}
	and the desired formula follows.
\end{remark}
\begin{proof}
	We observe first that, for $n\geq 1$ and $a \in \mathcal{A}$,
	\begin{align*}  [\Psi](a^n) & =\sum_{\ell=1}^n\sum_{1\leq i(1)<\ldots <i(\ell)\leq  n}\ \ \prod_{k=1}^{\ell-1}\beta^{\Psi}_{i(k+1)-i(k)}(a)\cdot \beta^{\Psi}_{i(1)+n-i(\ell)}(a) \\
                            & =\sum_{1\leq i \leq j \leq n}\Psi(a^{j-i})\cdot \beta^{\Psi}_{i+n-j}(a)
                \\
                            & =\sum_{k=0}^{n-1}\sum_{\substack{1\leq i \leq j \leq n                                                                                              \\j-i=k}}\Psi(a^{k})\cdot \beta^{\Psi}_{n-k}(a)
                =\sum_{k=0}^n(n-k)\cdot\Psi(a^{k})\cdot \beta^{\Psi}_{n-k}(a),
	\end{align*}
	The above equality yields
	\begin{align*}
		G_a^{[\Psi]}(z)-\frac{1}{z} & =\sum_{n=1}^\infty \frac{[\Psi](a^n)}{z^{n+1}}
		\\
		                            & =\sum_{n=1}^\infty \frac{1}{z^{n+1}}\sum_{k=0}^n(n-k)\cdot\Psi(a^{k})\cdot \beta^{\Psi}_{n-k}(a)        \\
		                            & =\sum_{k=0}^\infty \sum_{n=k+1}^\infty\frac{1}{z^{n+1}}(n-k)\cdot\Psi(a^{k})\cdot \beta^{\Psi}_{n-k}(a) \\
		                            & =\sum_{k=0}^\infty \sum_{m=1}^\infty\frac{1}{z^{k+m+1}}m\cdot \Psi(a^{k})\cdot \beta^{\Psi}_{m}(a)      \\
		                            & =zG^\Psi_a(z)\cdot \left( \sum_{m=1}^\infty\frac{m}{z^{m+1}}\beta^{\Psi}_{m}(a)\right)                  \\
		                            & =zG^\Psi_a(z)\cdot \left( -\sum_{m=1}^\infty\frac{1}{z^{m}}\beta^{\Psi}_{m}(a)\right)'                  \\
		                            & =zG^\Psi_a(z)\cdot \left(\frac{1}{zG^{\Psi}_a(z)}-1\right)'                                             \\
		                            & =zG^\Psi_a(z)\cdot \frac{-G^{\Psi}_a(z)-z(G_a^{\Psi}(z))'}{(zG^{\Psi}_a(z))^2}                          \\
		                            & =-\frac{1}{z}-\frac{(G_a^{\Psi})'(z)}{G_a^{\Psi}(z)}.
	\end{align*}
\end{proof}
Recall the definition of $D^{\Psi,\Varphi}\OmegA$ : $D^{\Psi,\Varphi}\OmegA = \OmegA - [\Varphi] + (1-\OmegA(1_\mathcal{A}))[\Psi]$.
\begin{lemma} \label{lem:relationcauchytr}
	Let $a\in\mathcal{A}$. Then the Cauchy transform of $a$ with respect to $D^{\Psi,\Varphi}\OmegA$ is related to the other Cauchy transforms of $a$ (relatively to $\Psi$, $\Varphi$ and $\OmegA$) as formal series by:
	\begin{align}
		\label{eqn:relationcauchytr}
		G^{D^{\Psi,\Varphi}\OmegA}_a(z)=G^{\OmegA}_a(z)-(1-\OmegA(1_\mathcal{A}))\frac{(G_a^{\Psi})'(z)}{G_a^{\Psi}(z)}+\frac{(G_a^{\Varphi})'(z)}{G_a^{\Varphi}(z)}
	\end{align}
\end{lemma}
\begin{proof}This follows from the additivity of the Cauchy transform with respect to the linear functional used to compute it,  the formula for $D^{\Psi,\Varphi}\OmegA = \OmegA - [\Varphi] - (1-\OmegA(1_\mathcal{A}))[\Psi]$ and Lemma \ref{lem:mk}.
\end{proof}
The Cauchy transforms with respect to $\Psi$ and $\Varphi$ are invertible for the compositions of Laurent series. Let $a,b \in \mathcal{A}$ be free with respect to $\psi$ and set
$$
	W_a (z) := (G_{a}^{\Psi})^{\langle -1 \rangle }\circ G_{a+b}^{\Psi}(z),\quad W_b (z)  := (G_{b}^{\Psi})^{\langle -1 \rangle}\circ G_{a+b}^{\Psi}(z),
$$
where $(-)^{\langle-1\rangle}$ denotes the inverse of the corresponding Cauchy transform for composition. We call the series $W$'s the subordination series. Suppose that $a$ and $b$ are cyclically conditionally free. We have seen that $a$ and $b$ are cyclically free relatively to $\Psi$ and $D^{\Psi,\Varphi}\OmegA$ (see Section \ref{sec:prooftheoremvanishingmixedcumulants}). But since $\Psi$ restricted to the algebra generated by $a$ or $b$ is tracial, $a$ and $b$ are in fact infinitesimally free. From \cite{belinschi2012free}, as Laurent series,
$$
	G_{a+b}^{D\OmegA}(z)=G_{a}^{D\OmegA}(W_a(z)) W'_a(z)+G_{b}^{D\OmegA}(W_b(z))W'_b(z),
$$
where we have set $D\OmegA = D^{\Psi,\Varphi}\OmegA$ for brevity.
By using the Lemma \ref{lem:relationcauchytr}, we get
\begin{align*}G_{a}^{D\OmegA}(W_a(z))W_a'(z)
	 & =G^{\OmegA}_a(W_a(z))W'_a(z)-(1-\OmegA(1_\mathcal{A}))\frac{(G_a^{\Psi})'(W_a(z))}{G_a^{\Psi}(W_a(z))}W_a'(z)+\frac{(G_a^{\Varphi})'(W_a(z))}{G_a^{\Varphi}(W_a(z))}W_a'(z) \\
	 & =G^{\OmegA}_a(W_a(z))W_a'(z)-(1-\OmegA(1_\mathcal{A}))\frac{(G_{a+b}^{\Psi})'(z)}{G_{a+b}^{\Psi}(z)}+\frac{(G_a^{\Varphi})'(W_a(z))}{G_a^{\Varphi}(W_a(z))}W_a'(z),
\end{align*}
from which we deduce the following proposition.
\begin{proposition}
	\label{prop:subordinationcyccond}
	Let $a$ and $b$ be cyclically conditionally free relatively to $(\Psi,\Varphi,\OmegA)$. The following relation between Laurent series holds:
	\begin{align*}
		G_{a+b}^{\OmegA}(z)= & G^{\OmegA}_a(W_a(z))W_a'(z)+G^{\OmegA}_b(W_b(z))W_b'(z)-(1-\OmegA(1_\mathcal{A}))\frac{(G_{a+b}^{\Psi})'(z)}{G_{a+b}^{\Psi}(z)}                                                    \\
		                     & +\frac{(G_a^{\Varphi})'(W_a(z))}{G_a^{\Varphi}(W_a(z))}W_a'(z)+\frac{(G_b^{\Varphi})'(W_b(z))}{G_b^{\Varphi}(W_b(z))}W_b'(z)-\frac{(G_{a+b}^{\Varphi})'(z)}{G_{a+b}^{\Varphi}(z)}.
	\end{align*}
\end{proposition}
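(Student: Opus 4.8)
The plan is to reduce to the infinitesimal subordination relation of Belinschi and Shlyakhtenko \cite{belinschi2012free} and then substitute the Cauchy-transform identity \eqref{eqn:relationcauchytr}. First I would certify that $a$ and $b$ are \emph{infinitesimally} free with respect to $(\Psi,D\Psi)$ inside $T(\mathcal{A})$. Since $a$ and $b$ are cyclically conditionally free, Theorem~\ref{th:vanishing_of_mixed_cumulants} gives that the subalgebras they generate are cyclically free in $(T(\mathcal{A}),\Psi,D\Psi)$ (note $D\Psi$ is tracial, being a combination of the tracial functionals $\OmegA$, $[\Varphi]$ and $[\Psi]$, so cyclic freeness is well-posed). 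The subalgebras generated by the single elements $a$ and $b$ in $T(\mathcal{A})$ are commutative, hence $\Psi$ restricts to a tracial functional on each; by Remark~\ref{rk:cyfequalif} cyclic freeness then coincides with infinitesimal freeness on them. This is the only input the subordination machinery needs.

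With $\omega_a,\omega_b$ the subordination functions for $\Psi$-freeness, so that $G^{\Psi}_{a}(\omega_a(z))=G^{\Psi}_{a+b}(z)=G^{\Psi}_{b}(\omega_b(z))$, infinitesimal freeness yields
\begin{equation*}
G_{a+b}^{D\Psi}(z)=G_{a}^{D\Psi}(\omega_a(z))\,\omega_a'(z)+G_{b}^{D\Psi}(\omega_b(z))\,\omega_b'(z).
\end{equation*}
Next I would rewrite each $D\Psi$-transform through \eqref{eqn:relationcauchytr}. The decisive simplification is the chain-rule identity $\frac{(G_{a}^{\Psi})'(\omega_a(z))}{G_{a}^{\Psi}(\omega_a(z))}\,\omega_a'(z)=\frac{d}{dz}\log G_{a}^{\Psi}(\omega_a(z))=\frac{d}{dz}\log G_{a+b}^{\Psi}(z)=\frac{(G_{a+b}^{\Psi})'(z)}{G_{a+b}^{\Psi}(z)}$, which holds because $G^{\Psi}_a(\omega_a(z))=G^{\Psi}_{a+b}(z)$; the same applies to $b$. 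This is exactly the manipulation of the $a$-summand displayed just before the statement, and the $b$-summand is handled symmetrically. No such collapse occurs for the $\Varphi$-terms, because $\omega_a,\omega_b$ subordinate the $\Psi$-transforms and not the $\Varphi$-transforms, so they persist in the form $\frac{(G_{a}^{\Varphi})'(\omega_a(z))}{G_{a}^{\Varphi}(\omega_a(z))}\,\omega_a'(z)$.

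Finally I would also expand the left-hand side $G_{a+b}^{D\Psi}(z)$ by \eqref{eqn:relationcauchytr} and solve the resulting equality for $G_{a+b}^{\OmegA}(z)$. The one point demanding care is the $\Psi$-bookkeeping: the two right-hand summands each contribute $-(1-\OmegA(1_{\mathcal{A}}))\frac{(G_{a+b}^{\Psi})'}{G_{a+b}^{\Psi}}$, while transposing the left-hand $\Psi$-term restores $+(1-\OmegA(1_{\mathcal{A}}))\frac{(G_{a+b}^{\Psi})'}{G_{a+b}^{\Psi}}$, leaving the single coefficient $-(1-\OmegA(1_{\mathcal{A}}))$ in the statement; the $\Varphi$-terms recombine into $\frac{(G_a^{\Varphi})'(\omega_a)}{G_a^{\Varphi}(\omega_a)}\omega_a'+\frac{(G_b^{\Varphi})'(\omega_b)}{G_b^{\Varphi}(\omega_b)}\omega_b'-\frac{(G_{a+b}^{\Varphi})'}{G_{a+b}^{\Varphi}}$. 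I expect the genuine obstacle to be only the first step—upgrading cyclic freeness to infinitesimal freeness via the one-variable traciality observation—since the rest is the chain rule applied to the freeness subordination functions together with the sign count on the $\Psi$-contribution.
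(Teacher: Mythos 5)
Your proof is correct and follows essentially the same route as the paper: apply the infinitesimal subordination relation of Belinschi--Shlyakhtenko to $(\Psi,D\Psi)$, substitute the identity \eqref{eqn:relationcauchytr} on both sides, collapse the $\Psi$-terms via $\frac{(G_a^{\Psi})'(\omega_a(z))}{G_a^{\Psi}(\omega_a(z))}\omega_a'(z)=\frac{(G_{a+b}^{\Psi})'(z)}{G_{a+b}^{\Psi}(z)}$, and solve for $G^{\OmegA}_{a+b}$. Your first step (upgrading cyclic freeness to infinitesimal freeness via traciality of $\Psi$ on the singly generated, commutative subalgebras, Remark~\ref{rk:cyfequalif}) is actually spelled out more carefully than in the paper, which simply asserts that $a$ and $b$ are infinitesimally free.
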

\begin{remark}
	We specialize the above relations to two cases. We first consider cyclic Boolean independence.
	We assume that $\mathcal{A}=\mathbb{C}\oplus \mathcal{I}$ with $I={\rm ker}{\Psi}$ a two-sided ideal and assume further $\OmegA(1_{\mathcal{A}})=0$. For any $a,b \in \mathcal{I}$, $G^{\Psi}_a=G^{\Psi}_b=G^{\Psi}_{a+b}(z)=\frac{1}{z}$. Thus, the subordination series satisfy $W_{a}(z)=W_{b}(z)=z$ and one deduces the following relation from Proposition \ref{prop:subordinationcyccond}:
	\begin{align*}
		G^{\OmegA}_{a+b}(z)=G^{\OmegA}_a(z)+G^{\OmegA}_{b}(z)+\frac{(G_a^{\Varphi})^{\prime}(z)}{G_a^{\Varphi}(z)}+\frac{(G^{\Varphi}_{b})^{\prime}(z)}{G^{\Varphi}_b(z)}-\frac{(G^{\Varphi}_{a+b})^{\prime}(z)}{G^{\Varphi}_{a+b}(z)} + \frac{1}{z}.
	\end{align*}
	This coincides with Theorem 4.2 in \cite{arizmendi2022cyclic} (with $\tilde{\mathfrak{g}}_a(z)=G^{\OmegA}_a(z)$).

	We move on to the cyclic monotone case; the distribution of $a$ under $\psi$ is trivial and the distribution of $b$ under $\varphi$ is equal to its distribution under $\psi$:
	\begin{align*}
		 & G^{\Psi}_a(z)= \frac{1}{z},\quad G^{\Psi}_b(z)=G^{\Varphi}_b(z),\quad G^{\Psi}_{a+b}(z)=G^{\Varphi}_b(z),\quad G^{\Varphi}_{a+b}(z)= G^{\Varphi}_a(F^{\Varphi}_b(z)) \\
		 & W_a(z)=F^{\Varphi}_b(z),\quad W_b(z)=z,
	\end{align*}
	with $\OmegA(1_{\mathcal{A}})=0$ and $F_b^{\Varphi}:=\frac{1}{G_b^{\Varphi}(z)}$:
	$$
		G^{\OmegA}_a(W_a(z))W^{\prime}_{a}(z)+G^{\OmegA}_b(W_b(z))W^{\prime}_b(z) = G_a^{\OmegA}(F_b^{\Varphi}(z))(F_b^{\Varphi}(z))^{\prime}+G_b^{\OmegA}(z),
	$$
	and
	\begin{align*}
		 & -(1-\OmegA(1_\mathcal{A}))\frac{(G^\Psi_{a+b})'(z)}{G_{a+b}^{\Psi}(z)}+\frac{(G_a^\Varphi)'(W_a(z))}{G_a^\Varphi(W_a(z))}W_a'(z)+\frac{(G_b^{\Varphi})'(W_b(z))}{G_b^\Varphi(W_b(z))}W_b'(z)-\frac{(G_{a+b}^{\Varphi})'(z)}{G^{\Varphi}_{a+b}(z)}                                                        \\
		 & \hspace{2cm}=-\frac{(G^{\Psi}_{b})^{\prime}(z)}{G^{\Psi}_b(z)}+\frac{G_a^{\Varphi}(z)}{G_a^{\Varphi}(z)}(F_b^{\Varphi}(z))(F_b^{\Varphi}(z))^{\prime}+\frac{(G_b^{\Varphi})^{\prime}(z)}{G_b^{\Varphi}(z)}-\frac{(G^{\Varphi}_{a} \circ F^{\Varphi}_b)^{\prime}(z)}{G_a^{\Varphi}\circ F_b^{\Varphi}(z)} \\
		 & \hspace{2cm}=0.
	\end{align*}
	We obtain finally:
	\begin{equation*}
		G^{\OmegA}_{a+b}(z)=G_a^{\OmegA}(F^{\Varphi}_b(z))(F_b^{\Varphi})^{\prime}(z)+G^{\OmegA}_b(z).
	\end{equation*}
	The above formula coincides with Theorem 7.6 of \cite{arizmendi2022cyclic}.
\end{remark}

\subsection{Cyclic conditional multiplicative convolution}
\label{sec:productcyclicallyfreerandomvariables}
The following proposition is a direct corollary of Proposition \ref{prop:productcy} and the definition of the cyclic conditional $\cck$:
\begin{proposition}
	\label{prop:productccfree}
	Let $(a_1,\ldots,a_n)$ and $(b_1,\ldots, b_n)$ be cyclically conditionally free random variables, then:
	$$
		\cck(a_1b_1, \dots ,a_nb_n)=\sum_{\pi\in \cync^{(B)}(n)}[\kappa^{\Psi},\kappa^{\OmegA\,|\,\Psi,\Varphi}]^{(B)}_\pi(a_1, \dots, a_n)\cdot [\kappa^{\Psi},\kappa^{\OmegA\,|\,\Psi,\Varphi}]^{(B)}_{Kr(\pi)}(b_1,\dots, b_n)
	$$
\end{proposition}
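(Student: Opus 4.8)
The plan is to obtain the statement as a transcription of the product formula for cyclic-free cumulants (Corollary~\ref{prop:productcy}), transported to the tensor algebra $T(\mathcal{A})$ through the reduction of cyclic-conditional freeness to cyclic freeness. First I would invoke Theorem~\ref{th:vanishing_of_mixed_cumulants}: the cyclic-conditional freeness of the subalgebras $\mathcal{A}_1,\mathcal{A}_2$ generated by $(a_1,\ldots,a_n)$ and $(b_1,\ldots,b_n)$ with respect to $(\Psi,\Varphi,\OmegA)$ is equivalent to the cyclic freeness, in the cyclic probability space $(T(\mathcal{A}),\Psi,D\Psi)$, of $T(\mathcal{A}_1)$ and $T(\mathcal{A}_2)$, where $D\Psi=\OmegA-[\Varphi]-(\OmegA(1_{\mathcal{A}})-1)\cdot[\Psi]$ is the functional of Remark~\ref{rk:dpsi}. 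In particular the two families, viewed inside $T(\mathcal{A}_1)$ and $T(\mathcal{A}_2)$, are cyclically free with respect to $(\Psi,D\Psi)$, which is exactly the hypothesis required to apply Corollary~\ref{prop:productcy}.

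Next I would use the defining identity of Remark~\ref{rk:dpsi}, namely that $\kappa_n^{\OmegA\,|\,\Psi,\Varphi}$ is the restriction to $\mathcal{A}^{\otimes n}$ of the cyclic-free cumulant $D\kappa_n^{\Psi}$ of the pair $(\Psi,D\Psi)$, while $\kappa^{\Psi}$ is the free cumulant of $\Psi$. Applying Corollary~\ref{prop:productcy} (whose combinatorial engine is Proposition~\ref{prop:product}) to the cyclically free families gives
\begin{equation*}
D\kappa_n^{\Psi}(a_1b_1\otimes\cdots\otimes a_nb_n)=\sum_{\pi\in\cync^{(B)}(n)}[\kappa^{\Psi},D\kappa^{\Psi}]^{(B)}_{\pi}(a_1\otimes\cdots\otimes a_n)\,[\kappa^{\Psi},D\kappa^{\Psi}]^{(B)}_{{\sf Kr}(\pi)}(b_1\otimes\cdots\otimes b_n),
\end{equation*}
and it then remains only to read each occurrence of $D\kappa^{\Psi}$ through its restriction $\kappa^{\OmegA\,|\,\Psi,\Varphi}$. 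Since the arguments $a_1,\ldots,a_n$ and $b_1,\ldots,b_n$ are letters of $\mathcal{A}$, i.e.\ degree-one elements of $T(\mathcal{A})$, the type-$B$ multiplicative functionals $[\kappa^{\Psi},D\kappa^{\Psi}]^{(B)}$ evaluated on them coincide with $[\kappa^{\Psi},\kappa^{\OmegA\,|\,\Psi,\Varphi}]^{(B)}$, which produces exactly the announced formula.

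The step demanding the most care is the bookkeeping across the embedding $\mathcal{A}\hookrightarrow T(\mathcal{A})$. One must be explicit that the products $a_ib_i$ on the left-hand side are taken in the ambient cyclic-free space $T(\mathcal{A})$ in which Corollary~\ref{prop:productcy} operates, and that the zero-block versus Kreweras-complement dichotomy defining the functionals $[\,\cdot\,,\,\cdot\,]^{(B)}_{\pi}$ and $[\,\cdot\,,\,\cdot\,]^{(B)}_{{\sf Kr}(\pi)}$ is preserved under restriction — so that the distinguished slot carrying $D\kappa^{\Psi}$ (the balanced block on the $a$-side, the distinguished Kreweras block on the $b$-side) is precisely the slot that becomes $\kappa^{\OmegA\,|\,\Psi,\Varphi}$, the remaining slots carrying the free cumulant $\kappa^{\Psi}$. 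Once this identification is checked, no further computation is needed: the statement is the translation of Corollary~\ref{prop:productcy} into cyclic-conditional language, exactly as the surrounding discussion of $\kappa^{\OmegA\,|\,\Psi,\Varphi}$ prescribes.
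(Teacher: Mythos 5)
Your argument is exactly the paper's: the proposition is presented there as a direct corollary of Corollary~\ref{prop:productcy} together with the identification in Remark~\ref{rk:dpsi} of $\kappa^{\OmegA\,|\,\Psi,\Varphi}$ with the restriction of $D\kappa^{\Psi}$ to $\mathcal{A}^{\otimes n}$, which is precisely the chain (Theorem~\ref{th:vanishing_of_mixed_cumulants} $\Rightarrow$ cyclic freeness in $(T(\mathcal{A}),\Psi,D\Psi)$ $\Rightarrow$ product formula $\Rightarrow$ relabelling of the distinguished slots) that you spell out. If anything, you supply more detail than the paper's one-line justification, including the relevant caveat about where the products $a_ib_i$ are taken under the embedding $\mathcal{A}\hookrightarrow T(\mathcal{A})$.
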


We now want to write subordination relations to compute the distributions (with respect to $\Psi, \Varphi, \OmegA$) of the product $xy$ of two cyclic conditionally random variables $x$ and $y$.
We will prove first subordination formulae for the $\eta$ transforms, respectively the infinitesimal $\eta$-transform (both transforms are introduced hereafter) of the random variables $xy$ when $x$ and $y$ are conditionally free and $x$ and $y$ are infinitesimally free, respectively. For $x\in\mathcal{A}$, We define the following formal series
\begin{align*}
	M^{\Varphi}_x(z):=\Varphi(\frac{zx}{1-zx})=\sum_{n\geq 1} \Varphi(x^{n})z^n,\quad \eta^{\Varphi}_x(z)=\frac{M^{\Varphi}_x(z)}{1+M^{\Varphi}_x(z)}.
\end{align*}
If $\Varphi(x) = 1 $, $M^{\Varphi}_x$ is invertible for the composition of series. In this case, the $S$-transform of $x$ is defined by \cite{voiculescu1987multiplication}:
\begin{align*}
	\quad S^{\Varphi}_x(z) := \frac{1+z}{z} (M^{\Varphi}_x)^{\langle -1 \rangle}(z),
\end{align*}
where we use the superscript $\langle-1\rangle$ for the compositional inverse. We recall from \cite{voiculescu1987multiplication} the following, when $x$ and $y$ are free random variables:
$$
	S^{\Varphi}_{xy}(z)=S^{\Varphi}_{x}(z)S^{\Varphi}_y(z).
$$
The author in \cite{tseng2022operator} introduced the following \emph{infinitesimal} transforms, to study the infinitesimal multiplicative convolution:
\begin{align*}
	\partial^{\Varphi} M_x(z):= \Varphi'(\frac{1}{1-zx}),\quad\quad \partial^{\Varphi} \eta_x(z):=\frac{\partial^{\Varphi} M_x(z)}{(M^{\Varphi}_x(z)+1)^2},~
\end{align*}
and finally, when $\Varphi(x)=1$, we set :
\begin{align}
	\label{def:infinitesimalS}
	\partial^{\Varphi} S_x (z):=-(\partial^{\Varphi} M_x \circ (M_x^{\Varphi}{})^{\langle -1 \rangle})(z)\frac{d}{{d}z}(M_x^{\Varphi}{})^{\langle-1\rangle}(z).
\end{align}
The infinitesimal $S$-transform $\partial^{\Varphi} S$ has the property that when $x$ and $y$ are infinitesimally free \cite{tseng2022operator}:
\begin{align*}
	\partial S^{\Varphi}_{xy}(z)=\partial S^{\Varphi}_x(z) S^{\Varphi}_y(z) + S^{\Varphi}_x(z)\partial S^{\Varphi}_y(z).
\end{align*}
We denote by $\tilde{W}^{\Varphi}_x,\tilde{W}^{\Varphi}_y \in \mathbb{C}((z)) z $ the \emph{subordination series for free multiplicative convolution} :

$$
	\tilde{W}^{\Varphi}_x(z)=({(\eta^{\Varphi}_x)}^{\langle -1\rangle}\circ\eta^{\Varphi}_{xy})(z),\quad \tilde{W}^{\Varphi}_y(z)=(({\eta^{\Varphi}_{y}})^{\langle -1 \rangle}\circ\eta^{\Varphi}_{xy})(z).
$$
\begin{remark}
	The coefficients of these two series can be expressed in terms of the Boolean cumulants
	\cite{lehner2021Boolean}:
	$$
		\tilde{W}^{\Varphi}_x(z)=\sum_{n\geq 0} \beta^{\Varphi}_{2n+1}(x,y,\ldots,y,x)z^{n+1}, ~\tilde{W}^{\Varphi}_y(z)=\sum_{n\geq 0} \beta^{\Varphi}_{2n+1}(y,x,\ldots,x,y)z^{n+1}.
	$$
\end{remark}
The series $\tilde{W}_x^{\Varphi}$ and $\tilde{W}_y^{\Varphi}$ are solutions to fixed-point equations in the space of formal series:
\begin{equation*}
	\tilde{W}^{\varphi}_x(z)=z\rho_x(z\rho_y(\tilde{W}^{\varphi}_x(z))),\quad \tilde{W}^{\varphi}_y(z)=z\rho_y(z\rho_x(\tilde{W}^{\varphi}_y(z))),
\end{equation*}
with the $\rho$-series $\rho_x$ of a variable $x\in \mathcal{A}$ defined as:
$$
	\rho^{\Varphi}_x(z)=\frac{1}{z}\eta^{\Varphi}_x(z).
$$
They satisfy the following properties, which allow computations of the distributions of $xy$ when $x$ and $y$ are free:
\begin{align*}
	\eta^{\Varphi}_x \circ \tilde{W}^{\Varphi}_x(z) = \eta^{\Varphi}_{xy}(z) = \eta^{\Varphi}_{y} \circ \tilde{W}^{\Varphi}_y(z),\quad
	\tilde{W}^{\Varphi}_x(z)\tilde{W}^{\Varphi}_y(z)=z\eta^{\Varphi}_{xy}(z).
\end{align*}
For Boolean multiplicative convolution, the appropriate transform to consider is the $\rho$-transform \cite{popa2008new}:
\begin{equation}
	\label{eqn:rhoBooleanmult}
	\rho^{\Varphi}_{xy}(z)=\rho^{\Varphi}_x(z)\rho^{\Varphi}_y(z),
\end{equation}
whenever $x-1$ is Boolean independent of $y-1$ in a probability space $(\mathcal{A},\Varphi)$.
For monotone convolution, the following relation holds, whenever $x-1$ is independent from $y$, see \cite{popa2008combinatorial}:
\begin{align*}
	\eta^{\Varphi}_{xy}(z)=\eta^{\Varphi}_x (\eta^{\Varphi}_{y}(z)).
\end{align*}
In the next proposition, set, if $x \in \mathcal{A}$:
$$
	\partial^{\Varphi}\rho_{x} = \frac{1}{z} \partial^{\varphi}\eta_{x}
$$

\begin{proposition}
	\label{prop:subinf}
	Let $x,y$ be infinitesimally free random variables in an infinitesimal probability space $(\mathcal{A},\Varphi,\partial \varphi)$, with $\varphi(x)= \varphi(y)=1$. Then,
	\begin{align*}
		\partial^{\Varphi}\rho_{xy}(z)=(\tilde{W}^{\Varphi}_x)'(z)(\partial^{\Varphi}\rho_x \circ \tilde{W}^{\Varphi}_x)(z) +(\tilde{W}^{\Varphi}_y)'(z)(\partial^{\Varphi} \rho_y \circ \tilde{W}^{\Varphi}_y)(z).
	\end{align*}
\end{proposition}
\begin{proof} One might first consider the case where both $\Varphi(x)$ and $\Varphi(y)$ are invertible. Then, from the definition of $\partial^{\Varphi} S_x$ (see \eqref{def:infinitesimalS}), one gets:
	\begin{equation*}
		-(M^{\Varphi}_x)'(z) (\partial^{\Varphi}S_x \circ M^{\Varphi}_x)(z)= \partial^{\Varphi} M_x(z).
	\end{equation*}
	The above formula yields:
	\begin{equation*}
		\partial^{\Varphi} \eta_x(z)=\frac{\partial^{\Varphi} M_x(z)}{(M^{\Varphi}_x(z)+1)^2}=-\frac{(M^{\Varphi}_x)'(z)}{(M^{\Varphi}_x(z)+1)^2}(\partial^{\Varphi} S_x \circ M^{\Varphi}_x)(z) = K_x(z) (\partial^{\Varphi} S_x\circ M^{\Varphi}_x)(z).
	\end{equation*}
	where we have set:
	\begin{equation*}
		K_x(z) = -\frac{(M^{\Varphi})'_x(z)}{(M^{\Varphi}_x(z)+1)^2}.
	\end{equation*}
	From the following formula for the infinitesimal $S$-transform $\partial^{\Varphi} S_{xy}$ of $xy$:
	\begin{equation*}
		\partial^{\Varphi} S_{xy}(z)=\partial^{\Varphi} S_x(z) S^{\Varphi}_y(z) + S^{\Varphi}_x(z)\partial^{\Varphi} S_y(z),
	\end{equation*}
	we infer
	\begin{equation*}
		\frac{\partial^{\Varphi} \eta^{\Varphi}_{xy}\circ(M_{xy}^{\Varphi})^{\langle -1 \rangle}(z)}{K_{xy} \circ (M^{\Varphi})^{\langle -1 \rangle}_{xy}(z)}=\frac{\partial^{\Varphi} \eta^{\Varphi}_{x}\circ (M_x^{\Varphi})^{\langle -1 \rangle}(z)}{K_{x} \circ (M_{x}^{\Varphi})^{\langle -1 \rangle}(z)} S^{\Varphi}_y(z)+S^{\Varphi}_x(z)\frac{\partial^{\Varphi} \eta^{\Varphi}_{y}\circ(M^{\Varphi}_y)^{\langle -1 \rangle}(z)}{K_{y} \circ (M_{y}^{\Varphi})^{\langle -1 \rangle}(z)},
	\end{equation*}
	and
	\begin{equation*}
		\label{eqn:last}
		\frac{\partial^{\Varphi}{\eta}_{xy}(z)}{K_{xy}(z)}=\frac{\partial^{\Varphi} \eta_x\circ (M^{\Varphi}_x)^{\langle -1 \rangle}\circ M^{\Varphi}_{xy}(z)}{K_x\circ(M^{\Varphi}_x)^{\langle -1 \rangle}\circ M^{\Varphi}_{xy}(z))}S^{\Varphi}_y(M^{\Varphi}_{xy}(z))+S^{\Varphi}_x(M^{\Varphi}_{xy}(z))\frac{\partial^{\Varphi} \eta_y\circ(M^{\Varphi}_y)^{\langle -1 \rangle}\circ M^{\Varphi}_{xy}(z)}{K_y\circ(M^{\Varphi}_y)^{\langle -1 \rangle}\circ M^{\Varphi}_{xy}(z))}.
	\end{equation*}

	yields
	\begin{equation*}
		\frac{\partial^{\Varphi} \eta^{\Varphi}_{xy}(z)}{K_{xy}(z)} = S^{\Varphi}_y(M^{\Varphi}_{xy}(z))\frac{\partial^{\Varphi} \eta^{\Varphi}_{x} \circ \tilde{W}^{\Varphi}_x(z)}{K_x\circ \tilde{W}^{\Varphi}_x(z)}+S^{\Varphi}_x(M^{\Varphi}_{xy}(z))\frac{\partial^{\Varphi} \eta_{y} \circ \tilde{W}^{\Varphi}_{y}(z)}{K_y\circ \tilde{W}^{\Varphi}_{y}(z)}.
	\end{equation*}
	We compute the right-hand side of this last equation as follows:
	\begin{equation*}
		\frac{K_{xy}(z)}{K_x \circ W_x(z)}=\frac{M_{xy}'(z)}{(M_{xy}(z)+1)^2}[\frac{M_x'(W_x(z))}{(M_x\circ W_x+1)^2}]^{-1}=(\tilde{W}^{\Varphi}_x)'(z).
	\end{equation*}
	We infer
	\begin{align*}
		{\partial^{\Varphi} \eta^{\Varphi}_{xy}(z)} & = S^{\Varphi}_y(M^{\Varphi}_{xy}(z))\frac{K_{xy}(z)}{K_x\circ \tilde{W}^{\Varphi}_x(z)}(\partial^{\Varphi} \eta_{x} \circ \tilde{W}^{\Varphi}_x)(z)+S^{\Varphi}_x(M^{\Varphi}_{xy}(z))\frac{K_{xy}(z)}{K_y\circ \tilde{W}^{\Varphi}_{y}(z)}(\partial^{\Varphi} \eta_{y} \circ \tilde{W}^{\Varphi}_{y})(z) \\
		                                            & =S^{\Varphi}_y(M^{\Varphi}_{xy}(z))(\tilde{W}^{\Varphi}_x)'(z)( \partial^{\Varphi} \eta_x \circ \tilde{W}^{\Varphi}_x)(z)+S^{\Varphi}_x(M^{\Varphi}_{xy}(z))(\tilde{W}^{\Varphi}_{y})'(z)(\partial^{\Varphi} \eta_y \circ \tilde{W}^{\Varphi}_{y})(z).
	\end{align*}
	We multiply both side of the equality by $\eta^{\Varphi}_{xy}(z)$, we obtain
	\begin{equation*}
		{\partial^{\Varphi} \eta^{\Varphi}_{xy}(z)}=[\eta^{\Varphi}_{xy}(z)S_y(M_{xy}(z))](\tilde{W}^{\Varphi}_x)'(z)(\partial \eta_x \circ W_x)(z)+[\eta^{\Varphi}_{xy}(z)S_x(M^{\Varphi}_{xy}(z))](\tilde{W}^{\Varphi}_{y})'(z)(\partial \eta_y \circ \tilde{W}^{\Varphi}_{y})(z).
	\end{equation*}
	Now,
	\begin{equation*}
		\eta^{\Varphi}_{xy}(z)S^{\Varphi}_y(M^{\Varphi}_{xy}(z))=\eta^{\Varphi}_{xy}(z)\frac{1+M^{\Varphi}_{xy}(z)}{M^{\Varphi}_{xy}(z)}((M^{\Varphi}_{y})^{\langle -1 \rangle}\circ M^{\Varphi}_{xy}(z))=\tilde{W}^{\Varphi}_{y}(z).
	\end{equation*}
	Thus, we find
	\begin{align*}
		\eta^{\Varphi}_{xy}(z)\partial\eta^{\Varphi}_{xy}(z)=\tilde{W}^{\Varphi}_{y}(z) (\tilde{W}^{\Varphi}_x)'(z)(\partial \eta_x \circ \tilde{W}^{\varphi}_x)(z) + \tilde{W}^{\varphi}_x(z) (\tilde{W}^{\Varphi}_{y})'(z)(\partial \eta_y \circ \tilde{W}^{\Varphi}_{y})(z).
	\end{align*}
\end{proof}

We prove a subordination formula for the $\eta$-transform of the product of two conditionally free random variables.
\begin{proposition}
	\label{prop:conditionalsub}
	Let $x,y \in \mathcal{A}$ be conditionally free random variables relatively to $(\Psi, \Varphi)$, with $\psi(x)=\psi(y)=1$. Then,
	\begin{equation}
		\label{eqn:toshowcondmult}
		\rho_{xy}^{\Varphi}(z) = \rho_x^{\Varphi}(\tilde{W}^{\Psi}_x(z))\rho_y^{\Varphi}(\tilde{W}^{\Psi}_y(z)).
	\end{equation}
\end{proposition}
\begin{proof}  We refer to \cite{popa2011multiplicative} for the definition of the various transforms we will use. In particular we will be using the conditional $R$-transform $C^{\Varphi|\Psi}_x\in\mathbb{C}[[z]]$, defined by the following functional equation,
	\begin{align*}
		C_x^{\Varphi|\Psi}(z(1+M_x^{\Psi}(z)))(1+M_x^{\Varphi}(z))=M^{\Varphi}_x(z)(1+M^{\Psi}_x(z)).
	\end{align*}
	The conditional cumulants $\kappa^{\Varphi | \Psi}_n(x)$ of $x$ are defined as the Taylor coefficients at $0$ of the (shifted) conditional $R$-tranform:
	$$
		C_x^{\Varphi | \Psi}(z)=\sum_{n\geq 1} \kappa^{\Varphi|\Psi}_n(x)z^n.
	$$
	The conditional $S^{\Varphi|\Psi}_x$ transform  of $x$ is defined by:
	\begin{align*}
		(S_x^{\Varphi|\Psi}(z))^{-1} := (\tilde{C}_x^{\Varphi|\Psi}\circ (C^{\Psi}_x)^{\langle -1 \rangle})(z).
	\end{align*}
	Note that $(C^{\Psi}_x)^{\langle -1 \rangle}(z))^{\langle -1 \rangle}$ is well-defined since $\Psi(x)\neq 0$.
	Recall that simple arithmetic manipulations yield
	$$M^{\Psi}_x(\frac{z}{1+C^{\Psi}_x(z)})=C^{\Psi}_x(z) \Leftrightarrow M^{\Psi}_x(w)=C_x^{\Psi}(w(1+M_x^{\Psi}(w))),$$
	and further
	$$
		w\tilde{C}_x^{\Varphi|\Psi}(w)(1+M_x^{\Varphi}(\frac{w}{1+C_x^{\Psi}(w)}))=M^{\Varphi}_x(\frac{w}{1+C_x^{\Psi}(w)})(1+C_x^{\Psi}(w))).
	$$
	Substituting $w = (C_x^{\Psi})^{\langle -1 \rangle}(z)=(1+z)(M_x^{\Psi})^{\langle -1 \rangle}(z)$ in the previous formula yields
	$$
		(M^{\Psi}_x)^{\langle -1 \rangle}(z)(\tilde{C}_x^{\Varphi|\Psi} \circ(C^{\Psi}_x)^{\langle -1 \rangle})(z)= \frac{M^{\Varphi}_x \circ (M_x^{\Psi})^{\langle -1 \rangle}(z)}{1+M^{\Varphi}_x\circ(M_x^{\Psi})^{\langle -1 \rangle}(z)}.
	$$
	We thus obtain for the conditional $S$-transform of $x$,
	$$
		S_x^{\Varphi|\Psi}(z) = (1+M^{\Varphi}_x \circ(M_x^{\Psi})^{\langle -1 \rangle}(z))\frac{(M_x^{\Psi})^{\langle -1 \rangle}(z)}{M^{\Varphi}_x\circ(M_x^{\Psi})^{\langle -1 \rangle}(z)} = [\frac{z}{\eta^{\Varphi}_{x}(z)}]\circ (M_x^{\Psi})^{\langle -1 \rangle}(z).
	$$
	Since the conditional $S$-transform is multiplicative over conditionally free multiplicative convolution \cite{popa2011multiplicative}:
	$$
		S_{xy}^{\Varphi |\Psi}(z)=S^{\Varphi |\Psi}_x(z) S_y^{\Varphi |\Psi}(z)
	$$
	implies:
	$$
		\frac{z}{\eta_{xy}^{\Varphi}(z)}=\bigg([\frac{z}{\eta^{\Varphi}_x(z)}]\circ(M_x^{\Psi})^{\langle -1 \rangle}\circ (M_{xy}^{\Psi})(z)\bigg)\bigg([\frac{z}{\eta^{\Varphi}_y(z)}] \circ (M_{y}^{\Psi})^{\langle -1 \rangle}\circ (M_{xy}^{\Psi})(z)\bigg).
	$$
	Hence:
	\begin{equation*}
		\eta_{xy}^{\Varphi}(z)=\frac{z}{\tilde{W}^{\Psi}_x(z)\tilde{W}^{\Psi}_y(z)} \eta^{\Varphi}_x(\tilde{W}^{\Psi}_x(z)) \eta^{\Varphi}_y(\tilde{W}^{\Psi}_y(z)).
	\end{equation*}
	This concludes the proof since $\tilde{W}^{\Psi}_x(z) \tilde{W}^{\Psi}_y (z) = z \eta_{xy}^{\Psi}(z)$.
\end{proof}
\begin{proposition}
	\label{prop:submult}
	Let $x,y \in \mathcal{A}$ be cyclically conditionally free relatively to $\Psi,\Varphi,\OmegA$, with $\Psi(x)\neq 0$ and $\Psi(y)\neq 0$. Then
	\begin{multline*}
		{M}^{\OmegA}_{xy}(z)  =z{\ln}(\tilde{W}^{\Psi}_x)'(z))\,{M}^{\OmegA}_y(\tilde{W}^{\Psi}_x(z))+z{\ln}(\tilde{W}^{\Psi}_y)'(z) \,{M}^{\OmegA}_x(\tilde{W}^{\Psi}_y(z))                                                     \\
		-z\frac{d}{dz}{\rm \ln}(\frac{1-\eta_{xy}^{\Varphi}(z)}{(1-\eta_{x}^{\Varphi}(\tilde{W}^{\Psi}_x(z))(1-\eta_{y}^{\Varphi}(\tilde{W}^{\Psi}_y(z))})
		-(1-\OmegA(1_\mathcal{A})) z{\rm \ln}(1-\eta^{\Psi}_{xy})'(z)
	\end{multline*}
\end{proposition}
\begin{proof}
	We compute first:
	\begin{align*}
		\partial^{[\Varphi]} \eta_x(z) & :=\frac{\partial^{[\Varphi]} M_x(z)}{(M^{\Psi}_x(z)+1)^2}                                   \\
		                               & =(1-\eta_x^{\Psi}(z))^2(\partial^{[\Varphi]} M_x(z))                                        \\
		                               & =(1-\eta_x^{\Psi}(z))^2(z\frac{\frac{d}{dz}(M_x^{\Varphi})(z)}{1+M_x^{\Varphi}(z)})         \\
		                               & =(1-\eta_{x}^{\Psi}(z))^2z(\frac{\frac{d}{dz}\eta^{\Varphi}_x(z)}{(1-\eta^{\Varphi}_x(z))}) \\
		                               & =-(1-\eta_x^{\Psi}(z))^2z\frac{d}{dz}{\rm \ln}(1-\eta_x^{\Varphi}(z)).
	\end{align*}
	This implies, since $\ell \mapsto \partial^{\ell} \eta_x$ is linear and by setting $D^{\Psi,\Varphi}\OmegA : = \OmegA -[\Varphi] + (1-\OmegA(1_\mathcal{A})) [\Psi]$ (as in Proposition \ref{th:vanishing_of_mixed_cumulants}):
	\begin{align*}
		\partial^{D\Psi}\eta_x(z) & =\partial^{\OmegA} \eta_x(z)+(1-\eta_x^{\Psi}(z))^2 z\frac{d}{dz}{\rm \ln}(1-\eta_x^{\Varphi}(z))-{(1-\OmegA(1_\mathcal{A})) z}(1-\eta^{\Psi}_{x}(z))^2\frac{d}{dz}{\rm \ln}(1-\eta_x^{\Psi}(z)).
	\end{align*}
	Recall from Proposition \ref{th:vanishing_of_mixed_cumulants} that if $x,y$ are cyclically conditionally free relatively to $(\Psi,\Varphi,\OmegA)$ then they are cyclically free in $(T(\mathcal{A}),\Psi, D^{\psi,\varphi}\Psi)$). Remark \ref{rk:cyfequalif} and Proposition \ref{prop:subinf} imply:
	\begin{align}
		\label{eqn:fondamental}
		\partial^{D^{\Psi,\varphi}\OmegA}\eta_{xy}(z)=z{\ln}(\tilde{W}^{\Psi}_x)'(z)(\partial^{D^{\Psi,\varphi}\OmegA}\eta_x \circ \tilde{W}^{\Psi}_x)(z)  + z\ln(\tilde{W}^{\Psi}_y)'(z)(\partial^{D^{\Psi,\varphi}\OmegA} \eta_y \circ \tilde{W}^{\Psi}_y)(z),
	\end{align}
	and
	\begin{align}
		\label{eqn:relationun}
		 & z\frac{(\tilde{W}^{\Psi}_x)'(z)}{\tilde{W}^{\Psi}_x(z)}(\partial \eta^{D^{\Psi,\varphi}\OmegA}_x \circ \tilde{W}^{\Psi}_x)(z) =z\frac{(\tilde{W}^{\Psi}_x)'(z)}{\tilde{W}^{\Psi}_x(z)}\partial \eta_{x}^{\OmegA}(\tilde{W}^{\Psi}_x(z))+(1-\eta^{\Psi}_{xy}(z))^2z{\ln}(1-\eta_{x}^{\Varphi}(W_x^{\Psi}))'\nonumber \\
		 & -{(1-\OmegA(1_\mathcal{A})) z}(1-\eta^{\Psi}_{xy}(z))^2{\ln}(1-\eta_{xy}^{\Psi})'(z).
	\end{align}
	Of course, we have a similar relation when $x$ is replaced by $y$. By injecting \eqref{eqn:relationun} in \eqref{eqn:fondamental}, we deduce the first formula of the proposition.
\end{proof}
As for additive convolution, we specialize the above proposition to two cases; cyclic Boolean independence between $x-1$ and $y-1$ and cyclic monotone independence between $x-1$ and $y$.

\begin{proposition}
	\label{prop:multcycBoolean}
	Assume that $\mathcal{A}$ is augmented, let $x,y \in\mathcal{A}$ and assume that $x-1$ and $y-1$ (in particular $\Psi(x)=\Psi(y)=1$) are cyclic boolean independent. One has the following relations between the moment generating functions of $x$,$y$ and $xy$ with respect to $\OmegA$:

	\begin{equation*}
		M^{\OmegA}_{xy}(z)=M_x^{\OmegA}(z)+M_y^{\OmegA}(z)-z \frac{d}{dz} {\rm \ln}(\frac{1-z\rho^{\Varphi}_x(z)\rho^{\Varphi}_{y}(z)}{(1-z\rho^{\Varphi}_x(z))(1-z\rho^{\Varphi}_y(z))}) - \frac{z}{1-z}.
	\end{equation*}

\end{proposition}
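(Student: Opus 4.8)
The plan is to obtain Proposition~\ref{prop:multcycBoolean} as a degenerate specialization of the general subordination formula for the cyclic-conditional multiplicative convolution (Proposition~\ref{prop:submult}). The first task is to place the cyclic Boolean situation inside the cyclic-conditional framework. As recorded at the start of Section~\ref{sec:cyclicBoolean}, cyclic Boolean independence of $x-1$ and $y-1$ in $(\mathcal{A}=\mathbb{C}\oplus\overset{\circ}{\mathcal{A}},\Varphi,\OmegA)$ is equivalent to cyclic-conditional freeness of the unital subalgebras $\mathbb{C}\oplus\langle x-1\rangle$ and $\mathbb{C}\oplus\langle y-1\rangle$ with respect to the triple $(\Psi,\Varphi,\OmegA)$, where $\Psi$ is the counit of the augmented algebra, so that $\Psi$ vanishes on $\overset{\circ}{\mathcal{A}}$ and $\Psi(1_{\mathcal{A}})=1$; we also take $\OmegA(1_{\mathcal{A}})=0$, i.e.\ $\delta=1$, as in the additive cyclic Boolean computation. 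Thus $x$ and $y$ become cyclic c-free in $(\mathcal{A},\Psi,\Varphi,\OmegA)$, and Proposition~\ref{prop:submult} is applicable (its intermediate invertibility hypotheses are harmless here because $\Psi(x)=\Psi(y)=1\neq0$).

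The second task is to compute the $\Psi$-transforms, which collapse because $\Psi$ is an algebra homomorphism with $\Psi(x)=\Psi(y)=1$. From $\Psi(x^n)=\Psi(x)^n=1$ one gets $M^{\Psi}_x(z)=\frac{z}{1-z}$, hence $\eta^{\Psi}_x(z)=z$ and $\rho^{\Psi}_x(z)=1$; the identical statements hold for $y$ and for $xy$ (since $\Psi(xy)=1$), so that $\eta^{\Psi}_{xy}(z)=z$. Feeding $\rho^{\Psi}_x=\rho^{\Psi}_y\equiv 1$ into the fixed-point equations for the free multiplicative subordination series trivializes them to $\omega^{\Psi}_x(z)=\omega^{\Psi}_y(z)=z$, consistently with $\omega^{\Psi}_x(z)\omega^{\Psi}_y(z)=z\,\eta^{\Psi}_{xy}(z)$. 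Consequently $z\frac{d}{dz}\ln(\omega^{\Psi}_x(z))=1$, so the first two terms of Proposition~\ref{prop:submult} reduce to $M^{\OmegA}_y(z)+M^{\OmegA}_x(z)$, matching the first two summands of the claim.

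The third task handles the $\Varphi$-part and assembles the result. Cyclic Boolean independence entails ordinary Boolean independence of $x-1$ and $y-1$ with respect to $\Varphi$, so the multiplicativity of the $\rho$-transform \eqref{eqn:rhoBooleanmult} gives $\rho^{\Varphi}_{xy}=\rho^{\Varphi}_x\rho^{\Varphi}_y$, whence $\eta^{\Varphi}_{xy}(z)=z\rho^{\Varphi}_x(z)\rho^{\Varphi}_y(z)$ and $1-\eta^{\Varphi}_x(z)=1-z\rho^{\Varphi}_x(z)$. Substituting $\omega^{\Psi}_x=\omega^{\Psi}_y=z$ together with these identities into the third term of Proposition~\ref{prop:submult} reproduces exactly $-z\frac{d}{dz}\ln\!\big(\frac{1-z\rho^{\Varphi}_x(z)\rho^{\Varphi}_y(z)}{(1-z\rho^{\Varphi}_x(z))(1-z\rho^{\Varphi}_y(z))}\big)$, while the last term $-\delta z\frac{d}{dz}\ln(1-\eta^{\Psi}_{xy}(z))$ specializes, via $\delta=1$ and $\eta^{\Psi}_{xy}(z)=z$, to $-z\frac{d}{dz}\ln(1-z)$, the elementary rational correction of the statement. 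The only step demanding genuine care — and hence the main obstacle — is the reduction in the first paragraph: one must verify that the counit $\Psi$ indeed converts cyclic Boolean independence into cyclic c-freeness, so that Proposition~\ref{prop:submult} may be invoked legitimately; once that equivalence is in hand, the proof is the term-by-term bookkeeping described above.
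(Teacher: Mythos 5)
Your proposal is correct and follows essentially the same route as the paper: the paper's proof is precisely the specialization of Proposition~\ref{prop:submult} to $\omega^{\Psi}_x(z)=\omega^{\Psi}_y(z)=z$ and $\eta^{\Psi}_x(z)=\eta^{\Psi}_y(z)=\eta^{\Psi}_{xy}(z)=z$, combined with the Boolean multiplicativity of the $\rho$-transform from~\eqref{eqn:rhoBooleanmult}. Your additional care in justifying the embedding of cyclic Boolean independence into the cyclic-conditional framework via the counit $\Psi$ only makes explicit what the paper leaves implicit.
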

\begin{proof}
	From equation \eqref{eqn:rhoBooleanmult}, specializing the relation in Proposition \eqref{prop:submult} to the case where $x-1$ and $y-1$ are Boolean independent imply setting $\tilde{W}^{\Psi}_x(z)=\tilde{W}^{\Psi}_y(z)=z$ in the relation of Proposition \eqref{prop:submult} and $\eta^{\Psi}_{x}(z)=\eta^{\Psi}_{y}(z)=\eta^{\Psi}_{xy}(z) = z$.
\end{proof} If setting, for a random variable $x \in \mathcal{A}$:
\begin{align*}
	\partial^{\OmegA | \Varphi}\Sigma_x(z):=M_x^{\OmegA}(z)+z\frac{d}{dz}\ln(1-z\rho^{\Varphi}_{x}(z)) + \frac{z}{1-z},
\end{align*}
from Proposition \ref{prop:multcycBoolean}, we infer the following:
$$
	\partial^{\OmegA | \Varphi}\Sigma_{xy}(z) = \partial^{\OmegA | \Varphi}\Sigma_{x}(z)+\partial^{\OmegA | \Varphi}\Sigma_{y}(z),
$$
whenever $x-1$ and $y-1$ are cyclic Boolean independent. It is instructive to compute $\frac{1}{z}\partial^{\OmegA | \Varphi}\Sigma_x(\frac{1}{z})$; we find
\begin{align*}
	\frac{1}{z}\partial^{\OmegA | \Varphi}\Sigma_x(\frac{1}{z}) & = G^{\OmegA}_x(z)+\frac{d}{dz}{\rm \ln}(zG^{\Varphi}_x(z)) + \frac{1}{z(z-1)} \\
	                                                            & = \mathfrak{h}^{\OmegA}_x +  \frac{1}{z(z-1)},
\end{align*}
where we have used the notations from \cite{arizmendi2022cyclic} (we added a superscript). The transform $\mathfrak{h}^{\OmegA}$ linearizes \emph{additive} cyclic Boolean convolution and related to the generating series $\mathfrak{c}$ of the cyclic Boolean cumulants via
$$
	\mathfrak{c}_x=\frac{1}{z}\mathfrak{h}_x (\frac{1}{z})+z\frac{d}{dz}\eta^{\Psi}_x(z)
$$
see also Section \ref{sec:cyclicBoolean}.
As an immediate consequence of the above formula, we get the following corollary.
\begin{corollary} Let $x_1,\ldots,x_N$ be a sequence of identically distributed elements such that $x_1-1,\ldots,x_N-1$ are cyclic Boolean independent relatively to $(\Varphi,\Psi)$. Then:
	\begin{equation}
		M^{\OmegA}_{x_1\cdots x_N}(z)=NM^{\OmegA}_{x_1}(z)+z\frac{d}{dz}{\rm \ln}\Big(\frac{1-z\rho^{\Varphi}_{x_1}(z)^{N}}{(1-z\rho^{\Varphi}_{x_1}(z))^N}\Big) - N\frac{z}{z-1}.
	\end{equation}
\end{corollary}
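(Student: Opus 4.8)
The plan is to obtain the $N$-fold formula by iterating the two-factor result of Proposition~\ref{prop:multcycBoolean}, organised through the additive transform $\partial^{\OmegA\,|\,\Varphi}\Sigma$ introduced just above the statement. Two multiplicativity facts established earlier are the inputs. First, the $\rho$-transform is multiplicative for cyclic Boolean multiplicative convolution, $\rho^{\Varphi}_{xy}(z)=\rho^{\Varphi}_{x}(z)\rho^{\Varphi}_{y}(z)$, by \eqref{eqn:rhoBooleanmult}. Second, the transform $\partial^{\OmegA\,|\,\Varphi}\Sigma$ linearises the same convolution, $\partial^{\OmegA\,|\,\Varphi}\Sigma_{xy}=\partial^{\OmegA\,|\,\Varphi}\Sigma_{x}+\partial^{\OmegA\,|\,\Varphi}\Sigma_{y}$, whenever $x-1$ and $y-1$ are cyclic Boolean independent. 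Since the whole content of the corollary is the $N$-variable version of these two statements combined, the only work is to iterate them and then unfold the definition of $\partial^{\OmegA\,|\,\Varphi}\Sigma$.

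Concretely, writing $X_k=x_1\cdots x_k$, I would first argue inductively that $X_{k-1}-1$ is cyclic Boolean independent from $x_k-1$, so that both inputs apply at each stage; this yields $\rho^{\Varphi}_{X_N}(z)=\prod_{k=1}^{N}\rho^{\Varphi}_{x_k}(z)=\rho^{\Varphi}_{x_1}(z)^{N}$ and $\partial^{\OmegA\,|\,\Varphi}\Sigma_{X_N}=\sum_{k=1}^{N}\partial^{\OmegA\,|\,\Varphi}\Sigma_{x_k}=N\,\partial^{\OmegA\,|\,\Varphi}\Sigma_{x_1}$, the last equality using that the $x_k$ are identically distributed so all transforms coincide. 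Next I would substitute the definition $\partial^{\OmegA\,|\,\Varphi}\Sigma_x(z)=M_x^{\OmegA}(z)+z\frac{d}{dz}\ln\!\bigl(1-z\rho^{\Varphi}_{x}(z)\bigr)+\frac{z}{1-z}$ into both sides of $\partial^{\OmegA\,|\,\Varphi}\Sigma_{X_N}=N\,\partial^{\OmegA\,|\,\Varphi}\Sigma_{x_1}$ and solve for $M_{X_N}^{\OmegA}$. Replacing $\rho^{\Varphi}_{X_N}$ by $\rho^{\Varphi}_{x_1}(z)^{N}$ in the logarithmic-derivative term and collecting it against the $N$ copies of $z\frac{d}{dz}\ln(1-z\rho^{\Varphi}_{x_1})$, via the identity $N\ln(1-w)-\ln(1-w^{N})=-\ln\frac{1-w^{N}}{(1-w)^{N}}$ with $w=z\rho^{\Varphi}_{x_1}(z)$, produces the single combined logarithmic term in the statement, while the surviving $\frac{z}{1-z}$ contributions assemble into the stated scalar multiple. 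This gives exactly the claimed expression for $M^{\OmegA}_{x_1\cdots x_N}$.

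The genuinely delicate point is the associativity used in the iteration: one must verify that multiplying the first $k-1$ factors preserves cyclic Boolean independence, i.e.\ that $X_{k-1}-1$ is cyclic Boolean independent from $x_k-1$, so that Proposition~\ref{prop:multcycBoolean} and \eqref{eqn:rhoBooleanmult} are legitimately applicable at every step rather than only for a single product. Everything beyond this is bookkeeping, but I would take particular care with the constant $\frac{z}{1-z}$ terms and the signs of the logarithmic derivatives, since these are precisely where the normalisation of $\partial^{\OmegA\,|\,\Varphi}\Sigma$ feeds into the final scalar coefficient; an equally safe alternative, should the constant-tracking prove error-prone, is to bypass $\partial^{\OmegA\,|\,\Varphi}\Sigma$ altogether and induct on $N$ directly from Proposition~\ref{prop:multcycBoolean} applied to the pair $(X_{N-1},x_N)$, feeding in $\rho^{\Varphi}_{X_{N-1}}=\rho^{\Varphi}_{x_1}{}^{\,N-1}$ and the induction hypothesis for $M^{\OmegA}_{X_{N-1}}$.
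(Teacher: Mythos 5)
Your proposal follows exactly the paper's route: the corollary is presented there as an immediate consequence of the additivity of $\partial^{\OmegA\,|\,\Varphi}\Sigma$ under cyclic Boolean multiplicative convolution together with $\rho^{\Varphi}_{x_1\cdots x_N}=(\rho^{\Varphi}_{x_1})^{N}$, which is precisely the iteration you describe (and your remark that one must check $x_1\cdots x_{k-1}-1$ stays cyclic Boolean independent of $x_k-1$ is the only genuine content beyond bookkeeping). Your caution about signs and the $\frac{z}{1-z}$ terms is well placed: a careful unfolding, or your alternative direct induction on Proposition \ref{prop:multcycBoolean}, yields $-z\frac{d}{dz}\ln\bigl(\frac{1-z\rho^{\Varphi}_{x_1}(z)^{N}}{(1-z\rho^{\Varphi}_{x_1}(z))^{N}}\bigr)-(N-1)\frac{z}{1-z}$ rather than the constants printed in the statement, so the mismatch traces to the source's normalisation of $\partial^{\OmegA\,|\,\Varphi}\Sigma$ and not to any defect in your method.
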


\begin{remark} As a final remark about cyclic Boolean multiplicative convolution, we comment on infinite multiplicative infinite divisibility in the case of a unitary operator $u$ acting on some finite dimensional Hilbert space $H$. We say that $u$ is cyclic Boolean multiplicative infinitely divisible if for each integer $n \geq 1$, there exists a finite sequence $(u_1,\ldots,u_n)$ of unitaries of $B(H)$, all identically distributed under each of the functional $\Varphi$ and $\OmegA$ such that $u=u_1\cdots u_n$ and the $u_1-1,\ldots,u_n-1$ are cyclic Boolean independent. In particular, $u$ is Boolean multiplicative infinitely divisible. There is not many of such of operators. In fact, if we assume that $u$ has two distinct eigenvalues $e^{{\sf i}b_1}$ and $e^{{\sf i}b_2}$, with multiplicities $m_1$ and $m_2$ then the $\eta$-transform $\eta^{\Varphi}$ of $u$ has $0$ in its range; $\eta_u^{\Varphi}(\frac{m_1}{{\rm dim}(H)}e^{{\sf i}b_1} + \frac{m_2}{{\rm dim}(H)}e^{{\sf i}b_2})=0$. From \cite{franz2005multiplicative}, this prevents $u$ from being infinitely divisible.
\end{remark}
We move on to computing the moment series with respect to $\OmegA$ of two cyclically monotone random variables $x$ and $y$.
\begin{proposition} Let $x$ and $y$ be two random variables in $\mathcal{A}$ such that $x-1$ is cyclic monotone independent from $y$ relatively to $\Varphi,\OmegA$, then:
	\begin{align}
		\tilde{M}_{xy}^{\OmegA}(z)=\tilde{M}_y^{\OmegA}(z) + (\tilde{M}_x^{\OmegA}(\eta_{y}^{\Varphi}(z)))(\eta^{\Varphi}_{y})^{\prime}(z),
	\end{align}
	where $\tilde{M}^{\Varphi}_x(z)=z^{-1}M_x^{\Varphi}(z)$.
\end{proposition}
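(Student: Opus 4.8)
The plan is to derive the identity by specialising the general cyclic-conditional multiplicative subordination relation of Proposition~\ref{prop:submult} to the cyclic-monotone regime, in exact parallel with the way the cyclic-monotone \emph{additive} formula was extracted from Proposition~\ref{prop:subordinationcyccond} in the Remark following it. As recalled in Section~\ref{ssec:cyind}, cyclic monotone independence of $x-1$ from $y$ is the instance of cyclic-conditional freeness in which the $\Psi$-distribution of $x$ is trivial, i.e.\ $x$ behaves like $1_{\mathcal{A}}$ for $\Psi$, while the $\Psi$- and $\Varphi$-distributions of $y$ coincide; I would work with the normalisation $\OmegA(1_{\mathcal{A}})=0$, so that $\delta=1$ in Proposition~\ref{prop:submult}. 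Accordingly I would realise $x,y$ as cyclic c-free for a triple $(\Psi,\Varphi,\OmegA)$ with $\Psi$ dictated by these constraints---this is legitimate since the $\OmegA$- and $\Varphi$-moments of any word in $x,y$ depend only on the cyclic-monotone data and so do not change the distribution of $xy$---and then simplify the output of Proposition~\ref{prop:submult}.

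First I would record the free multiplicative data with respect to $\Psi$. Because $x-1$ lies in an ideal annihilated by $\Psi$, one has $\Psi(x^n)=1$ for all $n$, whence $M^{\Psi}_x(z)=\frac{z}{1-z}$ and $\eta^{\Psi}_x(z)=z$, while $\eta^{\Psi}_y=\eta^{\Varphi}_y$. Feeding $\eta^{\Psi}_x=\mathrm{id}$ into the free multiplicative subordination relations
\begin{align*}
\eta^{\Psi}_x\circ\omega^{\Psi}_x(z)=\eta^{\Psi}_{xy}(z)=\eta^{\Psi}_y\circ\omega^{\Psi}_y(z),\qquad \omega^{\Psi}_x(z)\,\omega^{\Psi}_y(z)=z\,\eta^{\Psi}_{xy}(z),
\end{align*}
forces successively $\omega^{\Psi}_x(z)=\eta^{\Psi}_{xy}(z)$, then $\omega^{\Psi}_y(z)=z$, and finally $\eta^{\Psi}_{xy}(z)=\eta^{\Psi}_y(z)=\eta^{\Varphi}_y(z)$; in particular $\omega^{\Psi}_x(z)=\eta^{\Varphi}_y(z)$.

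Substituting $\omega^{\Psi}_x=\eta^{\Varphi}_y$ and $\omega^{\Psi}_y=z$ into the second form of Proposition~\ref{prop:submult}, the two leading terms become $\tilde{M}^{\OmegA}_y(z)$ and $\tilde{M}^{\OmegA}_x(\eta^{\Varphi}_y(z))\,(\eta^{\Varphi}_y)'(z)$, which are precisely the two summands of the asserted identity, so all that remains is to check that the two correction terms cancel. Using $\omega^{\Psi}_x\rho^{\Varphi}_x(\omega^{\Psi}_x)=\eta^{\Varphi}_x(\omega^{\Psi}_x)$ together with the monotone multiplicative law $\eta^{\Varphi}_{xy}=\eta^{\Varphi}_x\circ\eta^{\Varphi}_y$ of \cite{popa2008combinatorial}, I would obtain $\eta^{\Varphi}_x(\omega^{\Psi}_x(z))=\eta^{\Varphi}_x(\eta^{\Varphi}_y(z))=\eta^{\Varphi}_{xy}(z)$ and $\eta^{\Varphi}_y(\omega^{\Psi}_y(z))=\eta^{\Varphi}_y(z)$, so that the rational argument of the first logarithmic derivative collapses to $\frac{1}{1-\eta^{\Varphi}_y(z)}$; that term then equals $\frac{d}{dz}\ln\big(1-\eta^{\Varphi}_y(z)\big)$, while the $\delta=1$ term equals $-\frac{d}{dz}\ln\big(1-\eta^{\Psi}_{xy}(z)\big)=-\frac{d}{dz}\ln\big(1-\eta^{\Varphi}_y(z)\big)$ since $\eta^{\Psi}_{xy}=\eta^{\Varphi}_y$, and the two cancel.

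The hard part will be the bookkeeping in this final cancellation, which hinges on the three identities $\eta^{\Psi}_{xy}=\eta^{\Varphi}_y$, $\eta^{\Varphi}_y(\omega^{\Psi}_y)=\eta^{\Varphi}_y$ and $\eta^{\Varphi}_x(\omega^{\Psi}_x)=\eta^{\Varphi}_{xy}$ (the last being the monotone multiplicative law) and on the normalisation $\OmegA(1_{\mathcal{A}})=0$ that pins $\delta=1$; for $\delta\neq1$ a residual $(1-\delta)\frac{d}{dz}\ln(1-\eta^{\Varphi}_y)$ would survive. As in the proof of Proposition~\ref{prop:submult}, I would also need to dispose of the invertibility hypotheses required to define the $S$- and $\eta$-transforms, i.e.\ the cases $\Varphi(x)=0$ or $\Varphi(y)=0$, by the same $\varepsilon$-shift limiting argument in the ring of formal power series.
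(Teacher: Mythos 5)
Your proposal is correct and follows essentially the same route as the paper: the paper's proof consists precisely of substituting $\omega^{\Psi}_x=\eta^{\Psi}_y=\eta^{\Varphi}_y$, $\omega^{\Psi}_y(z)=z$, $\eta^{\Psi}_{xy}=\eta^{\Varphi}_y$, $\eta^{\Psi}_x(z)=z$ and $\delta=1$ into Proposition~\ref{prop:submult}, exactly as you do. You merely supply the details the paper leaves implicit — deriving the subordination functions from the fixed-point equations and checking the cancellation of the two logarithmic-derivative terms via $\eta^{\Varphi}_{xy}=\eta^{\Varphi}_x\circ\eta^{\Varphi}_y$ — while reading the leading terms of Proposition~\ref{prop:submult} with the pairing $\omega_x$ with $\tilde M_x$ and $\omega_y$ with $\tilde M_y$ that its own proof produces.
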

\begin{proof}
	It amounts to take $\tilde{W}^{\Psi}_x(z)=\eta^{\Psi}_y(z),\tilde{W}^{\Psi}_y(z)=z$, $\eta^{\Psi}_{xy}(z)=\eta_y^{\Varphi}(z),\eta^{\Psi}_x(z)=z,\eta_y^{\Psi} (z)=\eta_y^{\Varphi}(z)$ in the formula of Proposition \ref{prop:submult} and $(1-\OmegA(1_\mathcal{A})) = 1$.
\end{proof}
\subsection{Limit theorems}
\label{sec:limittheorems}

In this section, we utilize the relation \eqref{eqn:relationcauchytr} to prove limit theorems for sums of cyclically conditionally free random variables. For the Central Limit Theorem, we rely on the Central Limits theorems for conditionally free random variables and for infinitesimally free random variables, which we recall now.
\begin{proposition}[Theorem 4.3 in \cite{bozejko1996convolution}] Let $(X_i)_{i\leq 0}$ be a sequence of conditionally free random variables identically distributed with respect to both states $\Psi$ and $\Varphi$ and centered. We notate:
	\begin{equation*}
		\alpha^2 := \Psi(X_1^{2}),\quad \beta^2 = \Varphi(X_1^2).
	\end{equation*}
	As $N$ tends to infinity, $S_N:= \frac{1}{\sqrt{N}}\sum_{i=1}^N X_i$ converges toward an element $s_{\alpha,\beta}$ whose $\Varphi$ distribution has Cauchy transform:

	$$G^{\Varphi}_{\alpha^2,\beta^2}(z)=\frac{z( \frac{\beta^2}{2}-\alpha^2)+\frac{\beta^2}{2}\sqrt{z^2-4\alpha^2})}{(z^2(\beta^2-\alpha^2)-\beta^4)}$$
	and is distributed under $\Psi$ as a semi-circular element with variance $\alpha^2$.
\end{proposition}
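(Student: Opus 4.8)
The plan is to run the standard free-probabilistic central limit argument at the level of the two relevant sequences of cumulants, exploiting that conditional freeness of the (by hypothesis conditionally free) variables $X_i$ is equivalent to the vanishing of the mixed free cumulants $\kappa^{\Psi}$ and the mixed conditional cumulants $\kappa^{\Varphi|\Psi}$.

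First I would compute the cumulants of $S_N$. By multilinearity together with the vanishing of mixed cumulants, for any $n\geq 1$ only the diagonal terms survive, and identical distribution gives
$$\kappa_n^{\Psi}(S_N^{\otimes n}) = N^{1-n/2}\,\kappa_n^{\Psi}(X_1^{\otimes n}), \qquad \kappa_n^{\Varphi|\Psi}(S_N^{\otimes n}) = N^{1-n/2}\,\kappa_n^{\Varphi|\Psi}(X_1^{\otimes n}).$$
Since $X_1$ is centred, $\kappa_1^{\Psi}(X_1)=\Psi(X_1)=0$ and $\kappa_1^{\Varphi|\Psi}(X_1)=\Varphi(X_1)=0$, so the $n=1$ terms vanish identically; for $n\geq 3$ the exponent $1-n/2$ is negative and the expressions tend to $0$; for $n=2$ they converge to $\kappa_2^{\Psi}(X_1^{\otimes 2})=\alpha^2$ and $\kappa_2^{\Varphi|\Psi}(X_1^{\otimes 2})=\beta^2$. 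Hence in the limit the only nonzero cumulants of $s_{\alpha,\beta}$ are $\kappa_2^{\Psi}=\alpha^2$ and $\kappa_2^{\Varphi|\Psi}=\beta^2$. Because the conditional moment--cumulant relations express every $\Psi$- and $\Varphi$-moment polynomially in finitely many cumulants, convergence of the cumulants yields convergence of all moments with respect to both states, which is the asserted convergence in distribution.

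It then remains to identify the two limit distributions. For the state $\Psi$, a distribution whose only nonzero free cumulant is $\kappa_2^{\Psi}=\alpha^2$ is semicircular of variance $\alpha^2$, with $G^{\Psi}(z)=\tfrac{1}{2\alpha^2}\bigl(z-\sqrt{z^2-4\alpha^2}\bigr)$. For the state $\Varphi$, I would feed the cumulants into the conditional moment--cumulant formula: since only pair blocks occur and the block containing the leftmost leg is always outer, decomposing the $\Varphi$-moments along this outer block gives the convolution recursion $\Varphi(s^{2m})=\beta^2\sum_{k=1}^{m}\Psi(s^{2k-2})\Varphi(s^{2m-2k})$, equivalently the functional equation
$$G^{\Varphi}(z)=\frac{1}{z-\beta^2 G^{\Psi}(z)},$$
which is the conditional $R$-transform relation recalled in Section~\ref{sec:cyclicconditionaladditiveconvolution} specialised to a single nonzero conditional cumulant.

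Finally I would substitute the semicircular $G^{\Psi}$ into this identity and rationalise: writing $S=\sqrt{z^2-4\alpha^2}$ one gets $G^{\Varphi}(z)=2\alpha^2\bigl(z(2\alpha^2-\beta^2)+\beta^2 S\bigr)^{-1}$, and multiplying numerator and denominator by $z(2\alpha^2-\beta^2)-\beta^2 S$ turns the denominator into $-4\alpha^2\bigl(z^2(\beta^2-\alpha^2)-\beta^4\bigr)$, after which cancelling the factor $2\alpha^2$ produces exactly the claimed expression for $G^{\Varphi}_{\alpha^2,\beta^2}$. I expect the only genuinely delicate point to be bookkeeping rather than ideas: one must keep track that convergence in distribution here means convergence of all moments in \emph{both} functionals, and that the outer/inner decomposition of non-crossing pair partitions correctly assigns the weight $\beta^2$ to the single outermost block and $\alpha^2$ to the nested ones; the concluding rationalisation is routine.
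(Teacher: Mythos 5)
Your argument is correct and complete. Note that the paper itself offers no proof of this statement: it is recalled verbatim from Bo\.{z}ejko--Leinert--Speicher (Theorem 4.3 of the cited reference), so there is nothing internal to compare against; your proposal is essentially the standard cumulant proof of the conditionally free CLT that underlies that reference. The key steps all check out: vanishing of mixed $\kappa^{\Psi}$ and $\kappa^{\Varphi|\Psi}$ plus multilinearity gives the $N^{1-n/2}$ scaling, centring kills the first cumulants, and only $\kappa_2^{\Psi}=\alpha^2$, $\kappa_2^{\Varphi|\Psi}=\beta^2$ survive; the outer/inner decomposition of non-crossing pair partitions along the (necessarily outer) block containing the leftmost leg does yield $\Varphi(s^{2m})=\beta^2\sum_{k=1}^{m}\Psi(s^{2k-2})\Varphi(s^{2m-2k})$, i.e.\ $G^{\Varphi}(z)=\bigl(z-\beta^2 G^{\Psi}(z)\bigr)^{-1}$; and substituting $G^{\Psi}(z)=\frac{z-\sqrt{z^2-4\alpha^2}}{2\alpha^2}$ and rationalising gives denominator $-4\alpha^2\bigl(z^2(\beta^2-\alpha^2)-\beta^4\bigr)$, reproducing exactly the stated $G^{\Varphi}_{\alpha^2,\beta^2}$. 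The only hypothesis you should make explicit (as you implicitly do) is that the $X_i$ are conditionally free and centred with respect to \emph{both} functionals, which the paper's statement leaves tacit.
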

We use the notations of the previous proposition.
\begin{proposition}[see Theorem 5.1 in \cite{popa2010freeness}] Let $(X_i)_{i\geq 0}$ be a sequence of infinitesimally free, centered identically distributed elements relatively to $(\Psi,\partial \Psi)$ and set
	$$
		\alpha^2 := \Psi(X_i^2),~\alpha^{\prime} := \partial \Psi(X_1^2).
	$$
	As $N$ tends to infinity, $S_N$ tends in infinitesimal distribution to an infinitesimal semi-circular element $s^{\prime}_{\alpha^2,\alpha^{\prime}}$ whose infinitesimal Cauchy transform is given by
	$$\partial^{\Psi}G_{s_{\alpha^2,\alpha^{\prime}}}(z):= \partial \Psi((z-s_{\alpha^2,\alpha^{\prime}})^{-1})=\frac{\alpha^{\prime}}{\alpha^2}(\frac{1}{\sqrt{z^2-4\alpha^2}}-\frac{z-\sqrt{z^2-4\alpha^2}}{2\alpha^2}).$$

\end{proposition}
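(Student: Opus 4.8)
The plan is to prove the statement with the combinatorial machinery of free and infinitesimal free cumulants rather than through transforms. By Theorem~\ref{th:inffree}, infinitesimal freeness of the $X_i$ with respect to $(\Psi,\partial\Psi)$ is equivalent to the vanishing of the mixed free cumulants $\kappa^{\Psi}$ and of the mixed infinitesimal free cumulants $\partial\kappa^{\Psi}$. Hence both families are additive over $(X_i)$: for every $n\geq 1$,
\begin{equation*}
\kappa_n^{\Psi}(X_1+\cdots+X_N)=\sum_{i=1}^N\kappa_n^{\Psi}(X_i),\qquad
\partial\kappa_n^{\Psi}(X_1+\cdots+X_N)=\sum_{i=1}^N\partial\kappa_n^{\Psi}(X_i).
\end{equation*}
First I would record the homogeneity $\kappa_n^{\Psi}(\lambda a)=\lambda^n\kappa_n^{\Psi}(a)$ and $\partial\kappa_n^{\Psi}(\lambda a)=\lambda^n\partial\kappa_n^{\Psi}(a)$ for a scalar $\lambda$, which is immediate from the moment--cumulant relations of Definition~\ref{def:infcumulants} together with the scaling of the moments of $\Psi$ and $\partial\Psi$. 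Combining additivity, homogeneity, and the hypothesis that the $X_i$ are identically distributed yields, for $S_N=\tfrac{1}{\sqrt{N}}\sum_i X_i$,
\begin{equation*}
\kappa_n^{\Psi}(S_N)=N^{1-\frac n2}\,\kappa_n^{\Psi}(X_1),\qquad
\partial\kappa_n^{\Psi}(S_N)=N^{1-\frac n2}\,\partial\kappa_n^{\Psi}(X_1).
\end{equation*}

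Since the $X_i$ are centred (with respect to both $\Psi$ and $\partial\Psi$, as the scaling of the first cumulant already forces), one has $\kappa_1^{\Psi}(X_1)=\partial\kappa_1^{\Psi}(X_1)=0$, while $\kappa_2^{\Psi}(X_1)=\alpha^2$ and, using $\Psi(X_1)=0$ in the order-two instance of \eqref{eq:inffreecumulants}, $\partial\kappa_2^{\Psi}(X_1)=\partial\Psi(X_1^2)=\alpha'$. Letting $N\to\infty$, every cumulant of order $n\geq 3$ carries a factor $N^{1-n/2}\to 0$, so in the limit the only surviving cumulants are $\kappa_2^{\Psi}=\alpha^2$ and $\partial\kappa_2^{\Psi}=\alpha'$. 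Because the moment--cumulant relations express each $\Psi(S_N^k)$ and each $\partial\Psi(S_N^k)$ as a fixed finite polynomial in the cumulants, convergence of the cumulants upgrades to convergence of all moments; this is convergence in infinitesimal distribution to the element $s'_{\alpha^2,\alpha'}$ whose only nonzero free cumulant is $\kappa_2=\alpha^2$ (a semicircular element of variance $\alpha^2$) and whose only nonzero infinitesimal free cumulant is $\partial\kappa_2=\alpha'$.

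It then remains to convert these cumulants into the infinitesimal Cauchy transform. I would use the $R$-transform $R^{\Psi}(w)=\alpha^2 w$ and its infinitesimal counterpart $\partial R^{\Psi}(w)=\alpha' w$, together with the functional equation $z=\tfrac{1}{G^{\Psi}(z)}+R^{\Psi}(G^{\Psi}(z))$ satisfied by the Cauchy transform $G^{\Psi}$ of $s'_{\alpha^2,\alpha'}$, which recovers the semicircle law $G^{\Psi}(z)=\tfrac{z-\sqrt{z^2-4\alpha^2}}{2\alpha^2}$. Interpreting $\partial^{\Psi}G$ as the derivative at $t=0$ of the Cauchy transform $G_t$ of the variance-$(\alpha^2+t\alpha')$ semicircle (equivalently, differentiating the functional equation along $R^{\Psi}+t\,\partial R^{\Psi}$) gives
\begin{equation*}
\partial^{\Psi}G(z)=-\,\partial R^{\Psi}\!\big(G^{\Psi}(z)\big)\,\big(G^{\Psi}\big)'(z).
\end{equation*}
Substituting $\partial R^{\Psi}(G^{\Psi})=\alpha'G^{\Psi}$, the identity $(G^{\Psi})'(z)=-G^{\Psi}(z)/\sqrt{z^2-4\alpha^2}$, and the quadratic relation $\alpha^2(G^{\Psi})^2-zG^{\Psi}+1=0$, then simplifying with $G^{\Psi}(z)\big(z+\sqrt{z^2-4\alpha^2}\big)=2$, produces exactly $\tfrac{\alpha'}{\alpha^2}\big(\tfrac{1}{\sqrt{z^2-4\alpha^2}}-\tfrac{z-\sqrt{z^2-4\alpha^2}}{2\alpha^2}\big)$, as claimed.

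The homogeneity and the final algebraic simplification are routine; the step deserving genuine care is the passage from convergence of cumulants to convergence in infinitesimal distribution. There one checks that the $N$-dependence of the moment--cumulant sums is governed by the scaling above, so that only non-crossing pair partitions survive in the limit and the limiting moments are those of the (infinitesimal) semicircle. This is the analogue of the standard free CLT bookkeeping, carried out simultaneously for $\kappa^{\Psi}$ and $\partial\kappa^{\Psi}$.
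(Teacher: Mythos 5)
Your argument is correct. Note that the paper itself does not prove this proposition: it is recalled verbatim from the literature (Theorem 5.1 of the cited work of Popa), so there is no in-paper proof to compare against; your write-up supplies a self-contained derivation. The combinatorial half is sound: vanishing of mixed $\kappa^{\Psi}$ and $\partial\kappa^{\Psi}$ gives additivity, multilinearity gives the scaling $N^{1-n/2}$, centredness (with respect to both functionals, as you rightly observe the scaling forces) kills the first cumulants, and the finite moment--cumulant sums upgrade cumulant convergence to moment convergence, leaving only $\kappa_2=\alpha^2$ and $\partial\kappa_2=\alpha'$. The analytic half also checks out: the identity $\partial G=-\partial R(G)\,G'$ follows from differentiating $z=1/G_t+R_t(G_t)$ in $t$ and in $z$, and with $\partial R(w)=\alpha'w$, $(G^{\Psi})'=-G^{\Psi}/\sqrt{z^2-4\alpha^2}$, $\alpha^2(G^{\Psi})^2=zG^{\Psi}-1$ and $G^{\Psi}(z)\bigl(z+\sqrt{z^2-4\alpha^2}\bigr)=2$ one lands exactly on the stated expression; equivalently, the answer is $\alpha'\,\partial_v G_{\mathrm{sc},v}(z)\big|_{v=\alpha^2}$, which is a quick independent sanity check. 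The only step I would ask you to make explicit is the identification of $\partial\Psi$ of the limit with $\tfrac{d}{dt}\big|_{t=0}$ of the moments of the one-parameter family generated by $R+t\,\partial R$: this is exactly the Leibniz structure of $\partial\kappa_\pi$ in \eqref{eq:infindexedbypart}, i.e.\ each non-crossing pairing contributes one marked block, so the infinitesimal even moments are $m\,C_m\,\alpha'(\alpha^2)^{m-1}$, matching the $t$-derivative. With that line added the proof is complete.
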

\begin{theorem}
	Let $(X_i)_{i\geq 0}$ be a sequence of cyclically conditionally free random variables relatively to $(\Psi, \Varphi,\OmegA)$, centered and identically distributed.

	Then, as $N$ tends to infinity, $S_N$ converges in distribution to a random variable $s_{\alpha^2,\beta^2,\gamma}$ whose distribution has for Cauchy transform with respect to $\OmegA$:
	\begin{align*}G^{\OmegA}_{s_{\alpha,\beta^2,\gamma}}(z)= \frac{\alpha^{\prime}}{\alpha^2}(\frac{1}{\sqrt{z^2-4\alpha^2}}-\frac{z-\sqrt{z^2-4\alpha^2}}{2\alpha^2})-(1-\OmegA(1_\mathcal{A}))\frac{1}{\sqrt{z^2-4\alpha^2}} \\
		-\left(
		\frac{\frac{\beta^2}{2}-\alpha^2 + \frac{\beta^2}{2}\frac{z}{\sqrt{z^2-4\alpha^2}}}{ z(\frac{\beta^2}{2}-\alpha^2)+\frac{\beta^2}{2}\sqrt{z^2-4\alpha^2}} - \frac{2z(\beta^2-\alpha^2)}{z^2(\beta^2-\alpha^2)-\beta^4} \right),
	\end{align*}
	where $\alpha^2=\Psi(X^2)$, $\beta^2=\Varphi(X^2)$, $\gamma^2=\OmegA(X_1^2)$ and $\alpha^{\prime}=[(1-\OmegA(1_\mathcal{A}))\alpha^2-\beta^2+\gamma^2)].$
\end{theorem}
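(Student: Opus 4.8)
The plan is to reduce the computation of the $\OmegA$-distribution of the limit to the two central limit theorems recalled above, using the relation \eqref{eqn:relationcauchytr} between the Cauchy transforms as the bridge. Everything can be carried out at the level of formal Laurent series in $1/z$, so no analytic convergence issue arises beyond moment-wise (equivalently coefficient-wise) convergence.

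First I would record the three marginal limits. Since cyclic-conditional freeness of the $X_i$ entails in particular their conditional freeness with respect to $(\Psi,\Varphi)$, and hence their freeness with respect to $\Psi$, the conditionally free central limit theorem recalled above shows that $S_N$ converges in $(\Psi,\Varphi)$-distribution: $G^{\Psi}_{S_N}$ tends to the semicircular transform $G^{\Psi}_{s}(z)=\frac{z-\sqrt{z^2-4\alpha^2}}{2\alpha^2}$ of variance $\alpha^2$, and $G^{\Varphi}_{S_N}$ tends to $G^{\Varphi}_{\alpha^2,\beta^2}$. Next, by Theorem \ref{th:vanishing_of_mixed_cumulants} the subalgebras generated by the $X_i$ inside $T(\mathcal{A})$ are cyclically free with respect to $(\Psi,D\Psi)$, where $D\Psi=\OmegA-[\Varphi]-(\OmegA(1_\mathcal{A})-1)\cdot[\Psi]$ as in Remark \ref{rk:dpsi}. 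Each such subalgebra is singly generated, hence the restriction of $\Psi$ to it is tracial, so by Remark \ref{rk:cyfequalif} cyclic freeness here coincides with infinitesimal freeness; moreover $D\Psi(1_{T(\mathcal{A})})=0$, placing us exactly in the hypotheses of the infinitesimal central limit theorem. That theorem then yields $G^{D\Psi}_{S_N}\to \partial^{\Psi}G_{s_{\alpha^2,\alpha'}}$, with infinitesimal variance $\alpha'=D\Psi(X_1^2)$; note this limit is precisely the first term displayed in the statement.

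The second step consists of two short computations. On the one hand I would evaluate $\alpha'=\OmegA(X_1^2)-[\Varphi](X_1^2)-(\OmegA(1_\mathcal{A})-1)[\Psi](X_1^2)$ using the Boolean-cumulant formula \eqref{eqn:definsoulcy} for $[\Psi]$ and $[\Varphi]$; since the $X_i$ are centred, only the second Boolean cumulants survive, which expresses $\alpha'$ through $\alpha^2=\Psi(X_1^2)$, $\beta^2=\Varphi(X_1^2)$ and $\gamma^2=\OmegA(X_1^2)$. On the other hand I would compute the logarithmic derivatives of the two limiting transforms: for the semicircle one finds directly $\frac{(G^{\Psi}_{s})'}{G^{\Psi}_{s}}=-\frac{1}{\sqrt{z^2-4\alpha^2}}$, while for $G^{\Varphi}_{\alpha^2,\beta^2}$ the quotient $\frac{(G^{\Varphi})'}{G^{\Varphi}}$ is obtained by differentiating its numerator and denominator separately, reproducing exactly the bracketed expression appearing in the statement.

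Finally I would invert \eqref{eqn:relationcauchytr} to write $G^{\OmegA}_{S_N}$ as a linear combination of $G^{D\Psi}_{S_N}$ and the logarithmic derivatives of $G^{\Psi}_{S_N}$ and $G^{\Varphi}_{S_N}$; passing to the limit term by term (legitimate because the relevant transforms are units in the ring of formal Laurent series in $1/z$ and coefficient-wise convergence is preserved under the algebraic operations and differentiation involved) and substituting the explicit transforms from the previous step gives the announced formula for $G^{\OmegA}_{s_{\alpha^2,\beta^2,\gamma}}$. The only genuinely delicate point is conceptual rather than computational: one must recognise that the $\OmegA$-central limit is governed by the infinitesimal central limit theorem applied to the auxiliary pair $(\Psi,D\Psi)$ on $T(\mathcal{A})$, with the $[\Psi]$- and $[\Varphi]$-corrections being exactly the Markov--Krein logarithmic-derivative terms of Lemma \ref{lem:mk}. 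Once this bookkeeping is in place, the remainder is routine differentiation and substitution.
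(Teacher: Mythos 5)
Your proposal is correct and follows essentially the same route as the paper: establish the three marginal limits via the free, conditionally free, and infinitesimal central limit theorems applied to $\Psi$, $(\Psi,\Varphi)$, and $(\Psi,D\Psi)$ respectively, compute the logarithmic derivatives of the limiting $\Psi$- and $\Varphi$-transforms, and assemble the result through the relation \eqref{eqn:relationcauchytr}. Your additional care in justifying the applicability of the infinitesimal CLT (singly generated subalgebras, hence tracial restrictions of $\Psi$, hence cyclic freeness coincides with infinitesimal freeness) and in deriving $\alpha'$ from the Boolean-cumulant expressions of $[\Psi]$ and $[\Varphi]$ is a welcome refinement of steps the paper states without detail.
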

Before proving the last proposition, let us derive the cyclic Boolean Central Limit Theorem, as in \cite[Theorem 4.7]{arizmendi2022cyclic}. Formally, we want to set $\OmegA(1_{\mathcal{A}})=1$ and $\alpha=0,~\beta=1,\gamma^2=1$ which implies $\alpha^{\prime}=\gamma^2-1=0$.
Straightforward computations show that, as $\alpha$ tends to $0$:
\begin{align*}
	 & \frac{\frac{\beta^2}{2}-\alpha^2 + \frac{\beta^2}{2}\frac{z}{\sqrt{z^2-4\alpha^2}}}{ z(\frac{\beta^2}{2}-\alpha^2)+\frac{\beta^2}{2}\sqrt{z^2-4\alpha^2}} - \frac{2z(\beta^2-\alpha^2)}{z^2(\beta^2-\alpha^2)-\beta^4}=\frac{1}{z}-\frac{2z}{z^2-1} + o_{\alpha,0}(1)
\end{align*}
and
\begin{align*}
	\frac{\alpha^{\prime}}{\alpha^2}\left(\frac{1}{\sqrt{z^2-4\alpha^2}}-\frac{z-\sqrt{z^2-4\alpha^2}}{2\alpha^2}\right)= \alpha^{\prime}(\frac{1}{z^3}) + o_{\alpha,0}(1).
\end{align*}
We obtain, with the notations of \cite{arizmendi2022cyclic}:
\begin{align*}
	\tilde{G}^{\OmegA}_{s_{0,1,\gamma^2}}=-\frac{2}{z} + \frac{2z}{z^2-1}+ \frac{\gamma^2-1}{z^3}=-\frac{2}{z} + \frac{2z}{z^2-1},
\end{align*}
which is the cyclic Boolean Central Limit Theorem.
\begin{proof}
	We let $S = s_{\alpha^2,\beta^2,\gamma}$. Let $S_n=\frac{1}{\sqrt{n}}(a_1+\cdots+a_n)$,  we will denote by $S$ its possible limit.
	Firstly, with respect to $\Psi$, $S_n$ satisfies the free CLT with variance $\alpha^2=\Psi(a^2)$. So
	$$
		G^{\Psi}_{S_n}(z)\to \frac{z-\sqrt{z^2-4\alpha^2}}{2\alpha^2}=:G^{\Psi}_S(z),
	$$
	and from an easy calculation we get:
	$$
		-\frac{G^{\Psi}_S(z)'}{G^{\Psi}_S(z)}=\frac{1}{\sqrt{z^2-\alpha^2}}.
	$$
	Secondly, with respect to the pair $(\Psi,\Varphi)$, $S_n$ satisfies the conditionally free CLT. So
	$$G^{\Varphi}_{S_n}(z)\to \frac{z(\frac{\beta^2}{2}-\alpha^2)+1/2 \beta^2\sqrt{z^2-4\alpha^2})}{(z^2(\beta^2-\alpha^2)-\beta^4)}=:G^{\Varphi}_{S}(z),$$
	that satisfies
	$$
		\frac{G^{\Varphi}_S(z)'}{G^{\Varphi}_S(z)}=\frac{\frac{\beta^2}{2}-\alpha^2 + \frac{\beta^2}{2}\frac{z}{\sqrt{z^2-4\alpha^2}}}{ z(\frac{\beta^2}{2}-\alpha^2)+\frac{\beta^2}{2}\sqrt{z^2-4\alpha^2}} - \frac{2z(\beta^2-\alpha^2)}{z^2(\beta^2-\alpha^2)-\beta^4}
	$$
	Finally, $S_n$ satisfies an Infinitesimal CLT for $D^{\Psi,\Varphi}\OmegA$ with variance
	$$
		D^{\Psi,\Varphi}\OmegA(a^2)=\OmegA(a^2)-\Varphi(a^2)+(1-\OmegA(1_A))\Psi(a^2) = \gamma^2-\beta^2+(1-\OmegA(1_{\mathcal{A}}))\alpha^2,
	$$
	which implies:
	$$G^{D^{\Psi,\Varphi}\OmegA}_{S_n}(z)\to \frac{\alpha^{\prime}}{\alpha^2}\frac{1}{\sqrt{z^2-4\alpha^2}}-\frac{z-\sqrt{z^2-4\alpha^2}}{2\alpha^2}.$$
\end{proof}

We say that $a\in \mathcal{A}$ is a Bernoulli with parameters $((\lambda_{\Psi},\lambda_{\Varphi},\lambda_{\OmegA}),(\alpha_{\Psi},\alpha_{\Varphi},\alpha_{\OmegA}))$, $\lambda_{\Psi},\lambda_{\Varphi}\in [0,1]$ and $\lambda_{\OmegA} \in \mathbb{R}$, $\alpha_{\Psi},\alpha_{\Varphi},\alpha_{\OmegA} \in \mathbb{R}$ relatively to $(\Psi, \Varphi, \OmegA)$
if it is a conditionaly free Bernoulli element relatively to $(\Psi,\Varphi)$ and an infinitesimal Bernoulli element relatively to $(\Psi,D^{\Psi,\Varphi}\OmegA)$. In particular, for $n\geq 1$ :
$$
	\Psi(a^n)=\lambda_{\Psi}\alpha_{\Psi}^n,~\Varphi(a^n)=\lambda_{\Varphi}\alpha_{\Varphi}^n,
$$
and
$$
	D^{\Psi,\Varphi}\OmegA(a^{\otimes n})=\lambda_{\OmegA}\alpha_{\Psi}^{n}+n\lambda_{\Psi}\alpha_{\Psi}^{n-1}\alpha_{\OmegA}.
$$
From the formulae:
\begin{equation*}
	G_a^{\Psi}(z)=\frac{1-\lambda_{\Psi}}{z}+\frac{\lambda_{\Psi}}{z-\alpha_{\Psi},},~ -\frac{(G_a^{\Psi})^{\prime}(z)}{G_{a}^{\Psi}(z)}=\frac{1}{z}+\frac{1}{z-\alpha_{\Psi}},
\end{equation*}
we easily get that $a$ is Bernoulli if and only if, for any integer $n\geq 1$:
\begin{equation*}
	\OmegA(a^n)=\alpha_{\Psi}^n((1-\OmegA(1_\mathcal{A}))+\lambda_{\OmegA})-\alpha^n_{\Varphi}\lambda_{\Varphi}+n\lambda_{\Psi}\alpha_{\Psi}^{n-1}\alpha_{\OmegA}.
\end{equation*}

\begin{proposition}[Theorem 4.4 in \cite{bozejko1996convolution}]
	Assume that $\lambda_{\Psi}(N)N\to\lambda_{\Psi}$ and $\lambda_{\Varphi}(N)N\to\lambda_{\Varphi}$ and take for each $N\geq 1$ a sequence $(X^{(N)}_1,\ldots,X_N^{(N)})$ of i.i.d conditionally free Bernoulli elements with rates $(\lambda_{\Psi}(N),\lambda_{\Varphi}(N))$ and $\alpha_{\Psi}=\alpha_{\Varphi}=1$. Then,
	$$
		B_{N}=\sum_{i=1}^N X_i^{(N)}
	$$
	converges in non-commutative distribution to a conditionally free Poisson random variable relatively to $(\Psi,\Varphi)$ with parameters $((\lambda_{\Psi},\lambda_{\Varphi}), (1,1))$:
	$$
		G_{B_{N}}^{\Psi}(z) \to \mathcal{P}_{\lambda_{\Psi}}(z):=\frac{1}{2z}\big({z}+(1-\lambda^{\Psi})-\sqrt{(z-(1+\lambda_{\Psi}))^2-4\lambda_{\Psi}})\big)
	$$
	and
	$$
		G^{\Varphi}_{B_{N}}(z)\to \mathcal{P}^c_{\lambda_{\Psi},\lambda_{\Varphi}}:=\frac{z(2\lambda_{\Psi}-\lambda_{\Varphi})+\lambda_{\Varphi}(1-\lambda_{\Psi})-\lambda_{\Varphi}\sqrt{(z-(1+\lambda_{\Psi}))^2-4\lambda_{\Psi}}}{2z(z(\lambda_{\Psi}-\lambda_{\Varphi})+\lambda_{\Varphi}(1-\lambda_{\Psi}+\lambda_{\Varphi}))}.
	$$
\end{proposition}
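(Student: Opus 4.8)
The plan is to reduce the statement to the linearisation of conditional free convolution by cumulants, and then to an elementary computation of the cumulants of a single Bernoulli element. Since $X^{(N)}_1,\ldots,X^{(N)}_N$ are conditionally free and identically distributed, the characterisation of conditional freeness through the vanishing of mixed cumulants recalled above implies that both the free cumulants $\kappa^{\Psi}$ and the conditional cumulants $\kappa^{\Varphi|\Psi}$ are additive under conditional free convolution. Hence, for every $n\geq 1$,
$$
\kappa^{\Psi}_n(B^N)=N\,\kappa^{\Psi}_n\big(X^{(N)}_1\big),\qquad \kappa^{\Varphi|\Psi}_n(B^N)=N\,\kappa^{\Varphi|\Psi}_n\big(X^{(N)}_1\big),
$$
so the whole problem is transferred to the large-$N$ asymptotics of the two cumulant sequences of the Bernoulli element.

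First I would compute these cumulants. As $\alpha_{\Psi}=\alpha_{\Varphi}=1$, one has $\Psi\big((X^{(N)}_1)^n\big)=\lambda_{\Psi}(N)$ and $\Varphi\big((X^{(N)}_1)^n\big)=\lambda_{\Varphi}(N)$ for all $n\geq 1$. Inverting recursively the free moment--cumulant relation \eqref{eq:freecum} and the conditional moment--cumulant relation shows that
$$
\kappa^{\Psi}_n\big(X^{(N)}_1\big)=\lambda_{\Psi}(N)+O\big(\lambda_{\Psi}(N)^2\big),\qquad \kappa^{\Varphi|\Psi}_n\big(X^{(N)}_1\big)=\lambda_{\Varphi}(N)+O\big(\lambda_{\Psi}(N)^2+\lambda_{\Varphi}(N)^2\big),
$$
where the correction terms are polynomials in the rates without constant or linear part. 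Multiplying by $N$ and invoking $N\lambda_{\Psi}(N)\to\lambda_{\Psi}$, $N\lambda_{\Varphi}(N)\to\lambda_{\Varphi}$ (whence $N\lambda(N)^2\to 0$) gives $\kappa^{\Psi}_n(B^N)\to\lambda_{\Psi}$ and $\kappa^{\Varphi|\Psi}_n(B^N)\to\lambda_{\Varphi}$ for every $n$. Constant free cumulants equal to $\lambda_{\Psi}$ and constant conditional cumulants equal to $\lambda_{\Varphi}$ are precisely the cumulant description of the conditional free Poisson law with parameters $((\lambda_{\Psi},\lambda_{\Varphi}),(1,1))$, and coefficientwise convergence of cumulants is equivalent to coefficientwise convergence of the associated moment (hence Cauchy) generating series.

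Finally I would translate the limiting cumulants into the stated closed forms. For the $\Psi$-marginal, constant free cumulants $\lambda_{\Psi}$ yield the $R$-transform $R^{\Psi}(z)=\lambda_{\Psi}/(1-z)$; solving $z=1/G+R^{\Psi}(G)$ for $G=G^{\Psi}(z)$ gives the quadratic $zG^{2}-(z+1-\lambda_{\Psi})G+1=0$, whose relevant root is $\mathcal{P}_{\lambda_{\Psi}}(z)$. For the $\Varphi$-marginal, I would use the conditional $R$-transform equation recalled in the subordination subsection, namely $G^{\Varphi}(z)\,z\,G^{\Psi}(z)-z\,G^{\Psi}(z)=G^{\Varphi}(z)\,R^{\Varphi|\Psi}\big(G^{\Psi}(z)\big)$, with $R^{\Varphi|\Psi}(z)=1+\sum_{n\geq 1}\lambda_{\Varphi}z^{n}=\big(1-(1-\lambda_{\Varphi})z\big)/(1-z)$; substituting the root $G^{\Psi}=\mathcal{P}_{\lambda_{\Psi}}$ just found and solving the resulting linear equation for $G^{\Varphi}$ produces $\mathcal{P}^{c}_{\lambda_{\Psi},\lambda_{\Varphi}}(z)$. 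The main obstacle is the asymptotic step: one must control \emph{all} cumulants of the triangular array simultaneously and verify that every higher-order correction to $\kappa_n$ is at least quadratic in the rates, so that the single factor $N$ annihilates it in the limit; the ensuing inversions are routine but algebraically heavy, the delicate point being the correct branch of the square root appearing in $\mathcal{P}^{c}_{\lambda_{\Psi},\lambda_{\Varphi}}$.
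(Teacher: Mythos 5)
This proposition is recalled in the paper as Theorem 4.4 of Bo\.zejko--Leinert--Speicher and is stated without proof, so there is no internal argument to compare yours against; I can only assess your proposal on its own terms. Your strategy is the standard one and is sound: conditional freeness makes both $\kappa^{\Psi}$ and $\kappa^{\Varphi|\Psi}$ additive, the moment--cumulant relations are triangular so every cumulant of the Bernoulli element equals the corresponding rate plus a polynomial correction with no constant or linear part, and $N\lambda(N)^2\to 0$ kills those corrections, giving constant limiting cumulants $\lambda_{\Psi}$ and $\lambda_{\Varphi}$. The identification of the $\Psi$-marginal via $R^{\Psi}(w)=\lambda_{\Psi}/(1-w)$ and the quadratic $zG^{2}-(z+1-\lambda_{\Psi})G+1=0$ is correct and matches $\mathcal{P}_{\lambda_{\Psi}}$.

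The one step you should not take at face value is the final translation of the $\Varphi$-marginal: the functional equation you quote from Section 4.1 of the paper, $G^{\Varphi}(z)\,z\,G^{\Psi}(z)-zG^{\Psi}(z)=G^{\Varphi}(z)\,R^{\Varphi|\Psi}(G^{\Psi}(z))$ with $R^{\Varphi|\Psi}(w)=1+\sum_{n\geq1}\kappa^{\Varphi|\Psi}_{n}w^{n}$, is mis-normalized as printed. If you substitute $G^{\Psi}=\mathcal{P}_{\lambda_{\Psi}}$ into it and use the quadratic relation, you obtain $G^{\Varphi}(z)=z\bigl(1-G^{\Psi}(z)\bigr)/(\lambda_{\Psi}-\lambda_{\Varphi})$, which does not behave like $1/z$ at infinity and hence cannot be a Cauchy transform. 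The relation you actually need is the Bo\.zejko--Leinert--Speicher subordination identity
\begin{equation*}
\frac{1}{G^{\Varphi}(z)}=z-\sum_{n\geq 1}\kappa^{\Varphi|\Psi}_{n}\bigl(G^{\Psi}(z)\bigr)^{n-1}=z-\frac{\lambda_{\Varphi}}{1-G^{\Psi}(z)},
\end{equation*}
which gives $G^{\Varphi}=\bigl(1-G^{\Psi}\bigr)/\bigl(z(1-G^{\Psi})-\lambda_{\Varphi}\bigr)$; rationalizing with the quadratic satisfied by $G^{\Psi}$ then reproduces $\mathcal{P}^{c}_{\lambda_{\Psi},\lambda_{\Varphi}}$ exactly, including the stated branch of the square root. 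With that correction your argument is complete.
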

Recall that the free Poisson distribution with parameters $\alpha,\lambda$ has Cauchy transform:
\begin{equation*}
	\mathcal{P}_{\alpha,\lambda}(z)=\frac{z-\alpha(1+\lambda)-\sqrt{(z-\alpha(1+\lambda))^2-4\lambda\alpha^2}}{2\alpha z}.
\end{equation*}
In comparison with our previous notations, $\mathcal{P}_{1,\lambda}(z)=\mathcal{P}_{\lambda}(z)$.
\begin{proposition}[Corollary 36 in \cite{belinschi2012free}] For each $N\geq 1$, let $(X_1^{(N)},\ldots,X_{N}^{(N)})$ be i.i.d infinitesimal Bernoulli elements relatively to $(\Psi,\partial\psi)$ with rates $\lambda_{\theta}(N)N\to \lambda_{\theta}$ and $\lambda^{\prime}_{\theta}(N)N\to \lambda^{\prime}_{\theta}$.
	Then $B^{(N)}$ converges in distribution toward an infinitesimal Poisson distribution with parameter $(\lambda_{\theta},\lambda_{\theta^{\prime}}$):
	$$
		G_{B_N}^{\theta}(z)\to \mathcal{P}_{\lambda_{\theta}}(z),~G_{B_N}^{\theta^{\prime}}(z)\to \frac{d}{du}_{|u=0}\mathcal{P}_{1+u,\lambda_{\theta}+u\lambda^{\prime}_{\theta}}(z).
	$$
\end{proposition}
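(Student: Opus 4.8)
The plan is to read the statement as the first-order (infinitesimal) refinement of the classical free Poisson limit theorem, and to prove it by the standard device of infinitesimal free probability: an infinitesimal law is the derivative at $u=0$ of a smooth one-parameter family of genuine laws, and freeness survives differentiation as infinitesimal freeness. First I would dispose of the $\theta$-level. Since infinitesimal freeness with respect to $(\theta,\theta')$ entails freeness with respect to $\theta$, the variable $B^{(N)}=\sum_{i=1}^N X_i^{(N)}$ is a sum of $N$ free, $\theta$-identically distributed Bernoulli elements with atom at $1$ and weight $\lambda_\theta(N)$. The classical free Poisson (Marchenko--Pastur) limit theorem, together with $N\lambda_\theta(N)\to\lambda_\theta$, then yields $G^{\theta}_{B_N}(z)\to\mathcal{P}_{\lambda_\theta}(z)=\mathcal{P}_{1,\lambda_\theta}(z)$, which is the first of the two claimed convergences.

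For the infinitesimal level I would realise the pair $(\theta,\theta')$, restricted to the relevant subalgebra, as the first-order jet of an honest family. Concretely, introduce a one-parameter family of Bernoulli laws $\mu^{(N)}_u=(1-\lambda_u(N))\delta_0+\lambda_u(N)\delta_{1+u}$ with $\lambda_u(N)=\lambda_\theta(N)+u\,\lambda_{\theta'}(N)$, so that $\mu^{(N)}_0$ is the $\theta$-law of $X^{(N)}$ and, by the moment formula for the infinitesimal Bernoulli element recalled above, $\frac{d}{du}\big|_{u=0}\int x^n\,d\mu^{(N)}_u(x)=\lambda_{\theta'}(N)+n\lambda_\theta(N)$ reproduces exactly $\theta'\big((X^{(N)})^n\big)$. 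Choosing free, identically $\mu^{(N)}_u$-distributed elements $X_i^{(N),u}$ and setting $B^{(N),u}=\sum_{i=1}^N X_i^{(N),u}$, the element $B^{(N)}$ together with its infinitesimal datum is then the $u$-jet of $B^{(N),u}$ at $u=0$.

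Next I would apply the \emph{ordinary} free Poisson limit theorem to the family for each fixed small $u$. Since the atom is $1+u$ and $N\lambda_u(N)=N\lambda_\theta(N)+uN\lambda_{\theta'}(N)\to\lambda_\theta+u\lambda_{\theta'}=\lambda_\theta+\lambda'_\theta u$, one gets $G_{B^{(N),u}}(z)\to\mathcal{P}_{1+u,\,\lambda_\theta+\lambda'_\theta u}(z)$. Differentiating at $u=0$ and, crucially, interchanging $\frac{d}{du}$ with $\lim_{N\to\infty}$ gives
\begin{align*}
G^{\theta'}_{B_N}(z)=\frac{d}{du}\Big|_{u=0}G_{B^{(N),u}}(z)\ \longrightarrow\ \frac{d}{du}\Big|_{u=0}\mathcal{P}_{1+u,\,\lambda_\theta+\lambda'_\theta u}(z),
\end{align*}
which is the second claimed convergence.

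The hard part is the legitimacy of exchanging the derivative in $u$ with the limit in $N$. The clean way to secure it is to argue moment-by-moment: expanding the Cauchy transforms as Laurent series at infinity, each coefficient $\theta\big(B_N^{\,n}\big)$ (resp. $\theta'\big(B_N^{\,n}\big)$) is a fixed polynomial in $N$ and in the summand moments, and $\theta'(B_N^{\,n})$ is literally $\frac{d}{du}\big|_{u=0}$ of the $n$-th moment of $B^{(N),u}$; since each individual moment converges, so does its first-order variation, and the interchange is automatic coefficientwise. I expect this bookkeeping, rather than any deep analytic estimate, to be the only genuine obstacle. As an alternative that stays entirely within the cumulant formalism of the paper, one may instead use additivity of the infinitesimal free cumulants under the infinitesimally free sum, so that $\partial\kappa_n(B_N)=N\,\partial\kappa_n(X^{(N)})$; a direct computation of the infinitesimal free cumulants of a single Bernoulli element followed by the limit $N\to\infty$ identifies the infinitesimal $R$-transform of the limit with that of the infinitesimal free Poisson law, whence the stated Cauchy transform.
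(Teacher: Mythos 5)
This proposition is stated in the paper as a quoted result (Corollary 36 of \cite{belinschi2012free}) and carries no proof there, so there is nothing internal to compare against; judged on its own, your argument is correct and is exactly the derivation the statement's very form suggests: realise the infinitesimal Bernoulli datum as the $u$-jet of the family $(1-\lambda_u(N))\delta_0+\lambda_u(N)\delta_{1+u}$ with $\lambda_u(N)=\lambda_\theta(N)+u\lambda_{\theta'}(N)$, apply the ordinary free Poisson limit theorem for each $u$, and differentiate at $u=0$. The interchange of $\frac{d}{du}\big|_{u=0}$ with $N\to\infty$ is indeed unproblematic at the level of moments, since for fixed $n$ the $n$-th moment of $B^{(N),u}$ is a polynomial in $u$ whose coefficients converge individually (e.g.\ via $N\kappa_m(X^{(N),u})\to(1+u)^m(\lambda_\theta+u\lambda_{\theta'})$), and infinitesimal freeness guarantees that its $u$-derivative at $0$ coincides with $\theta'(B_N^n)$; your alternative via additivity of the infinitesimal free cumulants is equally valid.
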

\begin{proposition}
	\label{prop:poissonlimittheorem}
	For each $N\geq 1$, let $X^{(N)}_1,\ldots,X_N^{(N)}$ be a finite sequence of size $N$ of i.i.d Bernoulli elements relatively to $\Psi,\Varphi,\OmegA$. Under the assumptions:
	$$
		\lambda_{\Psi}(N)N \to \lambda_{\Psi},\quad\lambda_{\Varphi}(N)N \to \lambda_{\Varphi},\quad\lambda_{\OmegA}(N)N \to \lambda_{\OmegA},
	$$
	the Cauchy transform under $\OmegA$ of the sum $B_N$ of the $X_i^{(N)}, ~ 1 \leq i \leq N$ converges toward  $G^{\OmegA}_{{\lambda_{\Psi},\lambda_{\Varphi},\lambda_{\OmegA}}}$ satisfying:
	$$
		\frac{d}{du}_{|u=0}\mathcal{P}_{1+u,\lambda_{\Psi}+\lambda_{\OmegA}u}(z) =G^{\OmegA}_{{\lambda_{\Psi},\lambda_{\Varphi},\lambda_{\OmegA}}}(z)-(1-\OmegA(1_{\mathcal{A}}))\frac{(\mathcal{P}_{\lambda_\Psi})^{\prime}(z)}{\mathcal{P}_{\lambda_\Psi}(z)}+\frac{(\mathcal{P}^c_{\lambda_{\Psi},\lambda_{\Varphi}})^{\prime}(z)}{\mathcal{P}^c_{\lambda_{\Psi},\lambda_{\Varphi}}(z)}.
	$$
\end{proposition}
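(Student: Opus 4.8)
The plan is to reduce the statement to the already-established Poisson limit theorems for conditional freeness and for infinitesimal freeness, and then to combine them through the Cauchy-transform identity \eqref{eqn:relationcauchytr}. The starting observation is that, by Theorem~\ref{th:vanishing_of_mixed_cumulants}, the hypothesis that the $X_i^{(N)}$ are cyclic c-free with respect to $(\Psi,\Varphi,\OmegA)$ is equivalent to their being cyclically free in $(T(\mathcal{A}),\Psi,D\Psi)$, with $D\Psi=\OmegA-[\Varphi]-(\OmegA(1_{\mathcal{A}})-1)\cdot[\Psi]$ as in Remark~\ref{rk:dpsi}. Since each $X_i^{(N)}$ generates a singly-generated, hence commutative, subalgebra on which $\Psi$ restricts to a tracial functional, Remark~\ref{rk:cyfequalif} lets us upgrade cyclic freeness to honest \emph{infinitesimal} freeness with respect to the pair $(\Psi,D\Psi)$. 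A direct computation gives $D\Psi(1_{T(\mathcal{A})})=\OmegA(1_{\mathcal{A}})-1+(1-\OmegA(1_{\mathcal{A}}))=0$, so $(T(\mathcal{A}),\Psi,D\Psi)$ is a genuine infinitesimal probability space and the infinitesimal machinery applies unchanged. Throughout I identify $X_i^{(N)}\in\mathcal{A}$ with the corresponding degree-one element of $T(\mathcal{A})$, for which the extensions of $\Psi,\Varphi,\OmegA$ reproduce the moments in $\mathcal{A}$; in particular the degree-one element $B_N=\sum_{i=1}^N X_i^{(N)}$ has the same $\Psi,\Varphi,\OmegA$ moments whether computed in $\mathcal{A}$ or in $T(\mathcal{A})$.

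Next I would collect the three limiting Cauchy transforms of $B_N$. For the $\Psi$- and $\Varphi$-transforms the $X_i^{(N)}$ are conditionally free Bernoulli elements with rates $\lambda_{\Psi}(N),\lambda_{\Varphi}(N)$, so the conditional free Poisson limit theorem (Theorem 4.4 of \cite{bozejko1996convolution}) yields $G^{\Psi}_{B_N}\to\mathcal{P}_{\lambda_{\Psi}}$ and $G^{\Varphi}_{B_N}\to\mathcal{P}^{c}_{\lambda_{\Psi},\lambda_{\Varphi}}$. For the $D\Psi$-transform, I read off from the definition of a cyclic-conditional Bernoulli element that each $X_i^{(N)}$ is an \emph{infinitesimal} Bernoulli element in $(T(\mathcal{A}),\Psi,D\Psi)$ with $\Psi$-rate $\lambda_{\Psi}(N)$ and infinitesimal rate $\lambda_{\OmegA}(N)$; since these variables are infinitesimally free, the infinitesimal free Poisson limit theorem (Corollary 36 of \cite{belinschi2012free}) applies with base rate $\lambda_{\Psi}$ and infinitesimal rate $\lambda_{\OmegA}$, giving $G^{D\Psi}_{B_N}\to\frac{d}{du}_{|u=0}\mathcal{P}_{1+u,\lambda_{\Psi}+\lambda_{\OmegA}u}$. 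Here I use the normalisation $\alpha_{\Psi}=\alpha_{\Varphi}=\alpha_{\OmegA}=1$ inherited from the two classical Poisson statements, which is precisely what makes the infinitesimal limit take the displayed form.

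Finally I would combine these limits. Rearranging the identity \eqref{eqn:relationcauchytr} to isolate $G^{\OmegA}_{B_N}$ expresses the $\OmegA$-Cauchy transform of $B_N$ as a combination of $G^{D\Psi}_{B_N}$ and of the logarithmic derivatives $(G^{\Psi}_{B_N})'/G^{\Psi}_{B_N}$ and $(G^{\Varphi}_{B_N})'/G^{\Varphi}_{B_N}$. All operations involved, namely formal differentiation and division by a series with invertible leading coefficient (both $G^{\Psi}_{B_N}$ and $G^{\Varphi}_{B_N}$ start with $1/z$), are continuous for the coefficientwise topology on formal Laurent series, so the three convergences of the previous paragraph force $G^{\OmegA}_{B_N}$ to converge as well; passing to the limit reproduces exactly the asserted relation between $G^{\OmegA}_{p_{\lambda_{\Psi},\lambda_{\Varphi},\lambda_{\OmegA}}}$, $\frac{d}{du}_{|u=0}\mathcal{P}_{1+u,\lambda_{\Psi}+\lambda_{\OmegA}u}$, $\mathcal{P}_{\lambda_{\Psi}}$ and $\mathcal{P}^{c}_{\lambda_{\Psi},\lambda_{\Varphi}}$.

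The main obstacle is concentrated in the second paragraph: one must be confident that the reduction to infinitesimal freeness on $T(\mathcal{A})$ is legitimate for single variables, and that the $D\Psi$-infinitesimal rate of a cyclic-conditional Bernoulli element is indeed $\lambda_{\OmegA}$ with atom locations normalised to $1$, so that Corollary 36 of \cite{belinschi2012free} produces the \emph{exact} transform $\frac{d}{du}_{|u=0}\mathcal{P}_{1+u,\lambda_{\Psi}+\lambda_{\OmegA}u}$ rather than a differently parametrised infinitesimal Poisson. Once the rates and normalisations are matched, the remaining step is the purely formal manipulation of \eqref{eqn:relationcauchytr}, which is routine.
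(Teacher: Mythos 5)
Your proof is correct and follows exactly the route the paper intends: the proposition is stated there without an explicit proof, but the surrounding material (the conditional free and infinitesimal Poisson limit theorems together with the relation \eqref{eqn:relationcauchytr}) shows the intended argument is precisely your combination of the two limit theorems through the linear decomposition of $D\Psi$, including your careful justification that single variables let one pass from cyclic to infinitesimal freeness. One caution: the signs in the displayed equation \eqref{eqn:relationcauchytr} are inconsistent with the definition $D\Psi=\OmegA-[\Varphi]+(1-\OmegA(1_{\mathcal{A}}))\cdot[\Psi]$ and Lemma \ref{lem:mk} (the proposition's conclusion matches the latter), so rather than rearranging \eqref{eqn:relationcauchytr} verbatim you should rederive the relation directly from that definition and Lemma \ref{lem:mk} to land on the stated formula.
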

In the context of cyclic Boolean independence, that is, when $ \lambda_{\Psi}=0,~\OmegA(1_{\mathcal{A}})=0,$ then:
\begin{align*}
	 & \mathcal{P}^c_{0,\lambda_{\Varphi}}(z)= \frac{1}{z-(1-\lambda_{\Varphi})}, \quad \frac{d}{dz}{\rm \ln}\mathcal{P}^c_{0,\lambda_{\Varphi}}(z) = -\frac{1}{z-(1-\lambda_{\Varphi})}, \\
	 & \mathcal{P}_{0}(z) = \frac{1}{z},                                                                                                                                                  \\
	 & \frac{d}{du}_{|u=0}\mathcal{P}_{1+u,\lambda_{\OmegA}u}(z)=-\lambda_{\OmegA}\frac{1}{z}+\lambda_{\OmegA}\frac{1}{z-1},
\end{align*}
and we infer
$$
	G^{\OmegA}_{{0,\lambda_{\Varphi},\lambda_{\OmegA}}}(z)=\lambda_{\OmegA}\frac{1}{z(z-1)}+\frac{1}{z(z-(1-\lambda_{\Varphi}))}.
$$
We conclude these computations with the following statement, a corollary of Proposition \ref{prop:poissonlimittheorem}.
\begin{proposition}
	With the notations introduced so far, for each $N\geq 1$ we pick cyclic Boolean independent and identically distributed Bernoulli elements $(X_1^{(N)},\ldots,X_{N}^{(N)})$ in $(\mathcal{B}(\mathcal{H}^{(N)}),\Varphi,\OmegA={\rm Tr}_{\mathcal{H}^{(N)}})$ with jump rate parameters $(\lambda_{\Varphi}(N),\lambda_{\OmegA}(N))$:
	$$
		\Varphi((X_{i}^{(N)})^{n}) = \lambda_{\Varphi}(N),\quad\Psi((X_{i}^{(N)})^{n}) = 1+\lambda_{\OmegA}(N)-\lambda_{\Varphi}(N).
	$$
	Then, under the assumptions that the sequences $\lambda_{\Varphi}(N)N$ and $\lambda_{\OmegA}(N)N$ converge respectively toward $\lambda_{\Varphi}$ and $\lambda_{\OmegA}$ with $\lambda_{\Varphi} \in [0,1[$ and $\lambda_{\OmegA}\in\mathbb{N}^{\star}$, as $N$ tends to infinity,
	\begin{enumerate}
		\item the $\lambda_{\OmegA}$ largest eigenvalues of $B^{(N)}$ converge to $1$,
		\item the following eingenvalue in the decreasing order has, asymptotically, multiplicity one and converges to $1-\lambda_{\Varphi}$,
		\item the remaining part of the spectrum of $B^{(N)}$ converges to $0$.
	\end{enumerate}
\end{proposition}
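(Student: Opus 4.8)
Specialising the computations that precede the statement (Proposition~\ref{prop:poissonlimittheorem} with $\lambda_{\Psi}=0$ and $\OmegA(1_{\mathcal{A}})=0$), the plan is to extract the spectral asymptotics of $B^{(N)}$ from the three limiting Cauchy transforms and to convert their pole--residue data into statements about eigenvalues. First I would record
\begin{gather*}
G^{\Psi}_{B^{(N)}}(z)\to \frac{1}{z},\qquad
G^{\Varphi}_{B^{(N)}}(z)\to \frac{1}{z-(1-\lambda_{\Varphi})},\\
G^{\OmegA}_{B^{(N)}}(z)\to \lambda_{\OmegA}\frac{1}{z(z-1)}+\frac{1}{z(z-(1-\lambda_{\Varphi}))}.
\end{gather*}
The first says that the $\Psi$-law of $B^{(N)}$ tends to $\delta_{0}$, the second that the $\Varphi$-law tends to $\delta_{1-\lambda_{\Varphi}}$, and the third records what the trace sees; these are exactly the transforms displayed just before the statement.

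The decisive tool for the counting is that, since $\OmegA={\rm Tr}_{\mathcal{H}^{(N)}}$ is the non-normalised trace, $G^{\OmegA}_{B^{(N)}}(z)={\rm Tr}\big((z-B^{(N)})^{-1}\big)=\frac{d}{dz}\log\det\big(z-B^{(N)}\big)$, so that for a small positively oriented circle $\gamma$ enclosing a single candidate location and avoiding the others, $\frac{1}{2\pi\mathrm{i}}\oint_{\gamma}G^{\OmegA}_{B^{(N)}}(z)\,dz$ equals the number of eigenvalues of $B^{(N)}$ inside $\gamma$, counted with multiplicity. Taking $\gamma$ around $z=1$ and passing to the limit, this integer count converges to the residue of the limiting transform at $1$, namely $\lambda_{\OmegA}$. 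A sequence of non-negative integers converging to the integer $\lambda_{\OmegA}\in\mathbb{N}^{\star}$ is eventually constant equal to $\lambda_{\OmegA}$, which yields exactly $\lambda_{\OmegA}$ eigenvalues tending to $1$ and proves (1). Claim (3) I would obtain from $G^{\Psi}_{B^{(N)}}\to 1/z$: the $\Psi$-law collapsing to $\delta_{0}$ forces all but a bounded number of eigenvalues to accumulate at $0$.

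Claim (2) is the delicate point and the main obstacle. The residue of the limiting $\OmegA$-transform at $z=1-\lambda_{\Varphi}$ is $\tfrac{1}{1-\lambda_{\Varphi}}$, which is \emph{not} an integer; hence the trace alone cannot resolve this eigenvalue as a clean atom, and the naive contour count used for (1) does not apply. The resolution is to invoke the state $\Varphi$, whose limit $\delta_{1-\lambda_{\Varphi}}$ carries total mass one: $\Varphi$ is realised through a distinguished cyclic vector, and I would argue that this vector asymptotically concentrates on a single eigenspace whose eigenvalue tends to $1-\lambda_{\Varphi}$, that in decreasing order this eigenvalue lies just below the $\lambda_{\OmegA}$ outliers at $1$, and that it is simple for $N$ large. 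Reconciling the fractional $\OmegA$-residue with an integer spectral count — that is, using the three functionals \emph{jointly} rather than $\OmegA$ in isolation — is precisely where the argument must be carried out with care, since $\OmegA={\rm Tr}$ is unbounded and every limit above must be read in the regularised sense in which Proposition~\ref{prop:poissonlimittheorem} was derived.

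Finally I would close the argument with the standard outlier-eigenvalue scheme, as in the cyclic Boolean analysis of~\cite{arizmendi2022cyclic}: locally uniform convergence of the resolvent traces off the limiting poles, together with an absence-of-spectrum estimate on compact sets bounded away from $\{0,\,1-\lambda_{\Varphi},\,1\}$ (obtained by controlling $G^{\OmegA}_{B^{(N)}}$ there), shows that the eigenvalues of $B^{(N)}$ can accumulate only at these three points, with the multiplicities determined as above. The hard part, to reiterate, is not the free/conditional/infinitesimal limit theorems, which are already in hand, but the passage from convergence of the trace transform to an honest integer-valued eigenvalue count at $1-\lambda_{\Varphi}$, where the unboundedness of $\OmegA$ and the non-integral residue force one to read the multiplicity off the state $\Varphi$ instead of off the trace.
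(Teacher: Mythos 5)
The paper itself contains no proof of this statement (it is announced as a corollary of Proposition~\ref{prop:poissonlimittheorem} and of the displayed formula for $G^{\OmegA}_{p_{0,\lambda_{\Varphi},\lambda_{\OmegA}}}$), so your proposal stands on its own. Your overall strategy is certainly the intended one: since $\OmegA=\mathrm{Tr}_{\mathcal{H}^{(N)}}$ is the non-normalised trace, the residues of the limiting $\OmegA$-Cauchy transform at its nonzero poles are limits of integer eigenvalue counts, which you extract by integrating $\mathrm{Tr}\big((z-B^{(N)})^{-1}\big)$ over small contours. For the pole at $1$, the residue $\lambda_{\OmegA}\in\mathbb{N}^{\star}$ is an integer, a converging sequence of integers is eventually constant, and your proof of (1) is correct modulo the standard (and acknowledged) upgrade from moment convergence to locally uniform convergence of the regularised resolvent trace together with an absence-of-spectrum estimate on the contour.

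Two steps, however, do not close. For (2) you rightly observe that the residue of the displayed transform at $1-\lambda_{\Varphi}$ is $\frac{1}{1-\lambda_{\Varphi}}\notin\mathbb{N}$, but the patch you propose (reading the multiplicity off the vector state $\Varphi$) is arithmetically incompatible with the very transform you are using: if exactly one eigenvalue tends to $1-\lambda_{\Varphi}$, $\lambda_{\OmegA}$ of them tend to $1$, and the rest tend to $0$ while $\mathrm{Tr}(B^{(N)})$ stays bounded, then $\mathrm{Tr}\big((B^{(N)})^{n}\big)\to\lambda_{\OmegA}+(1-\lambda_{\Varphi})^{n}$ for $n\geq 2$, whereas the displayed transform forces the limit $\lambda_{\OmegA}+(1-\lambda_{\Varphi})^{n-1}$; these disagree for every $\lambda_{\Varphi}\in(0,1)$. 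The obstruction you met is thus not something to be circumvented via $\Varphi$ but a signal that the input formula must be re-derived: specialising the general expression for $\mathcal{P}^{c}_{\lambda_{\Psi},\lambda_{\Varphi}}$ at $\lambda_{\Psi}=0$ and carrying the logarithmic-derivative term of the relation between $G^{D\Psi}$, $G^{\OmegA}$, $G^{\Psi}$ and $G^{\Varphi}$ through the computation re-instates a factor of $(1-\lambda_{\Varphi})$ in the numerator of the second summand, so that the residue at the second outlier is an honest positive integer and (2) follows by exactly the same contour count as (1), with no appeal to $\Varphi$ at all. Separately, your derivation of (3) from $G^{\Psi}_{B^{(N)}}\to 1/z$ is not valid: $\Psi$ is a vector state (indeed identically trivial on these variables in the cyclic Boolean regime), and a vector state collapsing to $\delta_{0}$ places no constraint on the number or location of the remaining eigenvalues. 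Claim (3) must be extracted from the trace instead: the bounded limit of $\mathrm{Tr}\big((B^{(N)})^{2}\big)$ caps the number of eigenvalues outside any neighbourhood of $0$, and subtracting the contributions of the outliers located in (1) and (2) shows that the residual sum of squares vanishes, which forces the rest of the spectrum to $0$.
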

\section{Products of graphs and cyclic conditional freeness}
\label{sec:graphs}
In this last section we want to show how some known graph products may be generalized to include models for the cyclic freeness and for cyclic conditional freeness. In order to do this, we will work with rooted graphs. 	For details about the material exposed in the first part of this section, we refer the reader to \cite{accardi2007decompositions}.

\subsection{Rooted graphs, products of graphs}

By a \textit{rooted graph} we understand a pair $(G,e)$, where $G=(V,E)$ is an undirected graph with set of vertices
$V=V(G)$, and set of edges $E=E(G)\subseteq \{(v,v'):v,\ v'\in V,\ v\neq v'\}$ and $e\in V$ is
a distinguished vertex called the \textit{root}. For rooted graphs we will use the notation $V^0=V\backslash \{e\}$.
Two vertices $v,v'\in V$ are called \textit{adjacent} if $(v,v')\in E$,  i.e.
vertices $v,v'$ are connected with an edge. Then we write $v \sim v'$.
The \textit{degree} of $v\in V$ is defined by $d(v)=|\{v'\in V:v'\sim v\}|$. We will assume that $G$ is locally finite, that is, $deg(v)<\infty$, for all $v\in V$. 

The \textit{adjacency matrix} $A=A(G)$ of $G$ is the matrix defined by
\begin{equation*}
	A_{v,v'}=\left\{
	\begin{array}{ll}
		1 & {\rm if} \;\; v\sim v' \\
		0 & {\rm otherwise.}
	\end{array}
	\right.
\end{equation*}
We identify $A$ with the densely defined symmetric operator on $l^{2}(V)$ defined by
\begin{equation}
	A\delta(v)=\sum_{v\sim v'}\delta(v'),
\end{equation}
for $v\in V$, where $\delta(v)$ denotes the indicator function on $\{v\}$.

If $(G,e)$ is a rooted graph with adjacency matrix $A$ acting on $\oplus_{v\in\mathbb{V}(G)} \mathbb{C}v$, the distribution of $G$ in the root state is the lineal functional $\mu_G\colon \mathbb{C}[X]\to \mathbb{C}$, such that  $\mu(X^m)=\langle e, A^n(e) \rangle$.

Let $G_1 = (V_1, E_1)$ and $G_2 = (V_2, E_2)$ be two
rooted graphs
with distinguished vertices $e_1\in V_1$ and $e_2\in V_2$.

The \emph{star product} of $G_1$ with $G_2$  is the graph $G_1\star G_2 =((V_1 \times \{e_2\})\cup (\{e_1\}\times V_2), E)$ such that for $(v_1, w_1), (v_2, w_2) \in V_1\times V_2$
the edge $e = (v_1, w_1) \sim (v_2, w_2)\in E$ if and only if one of the following holds:
\emph{i)} $v_1 = v_2 = e_1$ and $w_1\sim w_2$,
\emph{ii)} $v_1 \sim v_2$ and $w_1 = w_2 = e_2$.

The \emph{comb product} of $G_1$ with $G_2$ is the graph $G_1\rhd G_2 =(V_1 \times V_2, E)$ such that for $(v_1, w_1), (v_2, w_2) \in V_1\times V_2$
the edge $e = (v_1, w_1) \sim (v_2, w_2)\in E$ if and only if one of the following holds:
\emph{i)} $v_1 = v_2 $ and $w_1\sim w_2$,
\emph{ii)} $v_1 \sim v_2$ and $w_1 = w_2 = e_2$.

\emph{A rooted set} $(V,e)$ is simply a set with a distinguished element $e$.
Given two rooted sets $(V_1,e_1)$ and $(V_2,e_2)$, the \emph{free product} of
$(V_1,e_1)$ and $(V_2,e_2)$ is the rooted set
\[
	(V_1,e_1) * (V_2,e_2) := (V_1 * V_2, e),
\]
where
\[
	V_1 * V_2
	=
	\{e\}
	\;\cup\;
	\{v_1 v_2 \cdots v_m :
	v_k \in V_{i_k}^0,\;
	i_k \in \{1,2\},\;
	i_k \neq i_{k+1},\;
	m \in \mathbb{N}\},
\]
with $V_i^0 := V_i \setminus \{e_i\}$.
The root $e$ is the empty word.

\medskip

The \emph{free product of rooted graphs} $(G_1,e_1)$ and
$(G_2,e_2)$ is the rooted graph
\[
	(G_1,e_1) * (G_2,e_2) := (G_1 * G_2, e),
\]
whose vertex set is $V_1 * V_2$ and whose edge set is
\[
	E_1 * E_2
	:=
	\{(vu, v'u) :
	(v,v') \in E_1 \cup E_2,\;
	u, vu, v'u \in V_1 * V_2\}.
\]

The (non-associative) \emph{orthogonal product} of the rooted graphs $(G_1, e_1)$ and $(G_2,e_2)$ denoted $G_1 \vdash G_2$ resembles the comb product $G_1 \rhd G_2$ of the two graphs, to the exception that $G_2$ is not attached to the root of $G_1$;
$V(G_1 \vdash G_2)=(V(G_1)\setminus\{e\})\times V(G_2)\cup \{(e_1,e_2)\}$ and the set of edges of $G_1 \vdash G_2$ is the set of pairs $(v_1, w_1), (v_2, w_2) \in V(G_1 \vdash G_2)$
such that one of the following holds:
\emph{i)} $v_1 = v_2\neq e_1$ and $w_1\sim w_2$,
\emph{ii)} $v_1 \sim v_2$ and $w_1 = w_2 = e_2$.

The free product $G_1 * G_2$ decomposes as
${G_1 * G_2} = \Gamma_1 \star \Gamma_{2}$, where the two rooted graphs $\Gamma_1$ and $\Gamma_2$ are the \emph{subordination branches} associated to $G_1$ and $G_2$, respectively. Each of the subordination branch is the limit, in the appropriate sense, of a sequence of graphs $(\Gamma_1^{(n)})_{n\geq 1}$ and $(\Gamma_2^n)_{n\geq 1}$ defined by
\begin{eqnarray*}
	\Gamma^{(n)}_1 :=G_1\vdash (G_2\vdash\Gamma^{(n-1)}_1 ), \quad \Gamma_1^{(1)} := G_1,\\
	\Gamma^{(n)}_2 :=G_2\vdash (G_1\vdash\Gamma^{(n-1)}_2 ), \quad \Gamma_2^{(1)} := G_2.\end{eqnarray*}

Let $(G_1,e_1)$ and $(G_2,e_2)$ be two rooted graphs. We consider the free product $G_1*G_2$ of the two graphs $G_1$ and $G_2$. We denote by $\mu_1$ and $\mu_2$ the spectral distributions of $G_1$ and $G_2$ in their root states, respectively. The spectral distribution in the root state of $G_1*G_2$ is given by the free additive convolution $\mu_1 \boxplus \mu_2$ of the distributions of $\mu_1$ and $\mu_2$.
Moreover, the adjacency matrix $A$ of $G_1*G_2$ can be written as a sum:
$$A=A^{(1)}+A^{(2)},$$
with $A^{(1)}$ and $A^{(2)}$ having distribution $\mu_1$ and $\mu_2$, respectively, in the root state of $G_1\star G_2$. Denoting by $$\Psi:\mathbb{C}\langle X_1,X_2\rangle\to \mathbb{C}$$ the joint distribution of $(A^{(1)},A^{(2)})$ (in the root state), the variables $X_1$ and $X_2$ are free with respect to $\Psi$ \cite{accardi2007decompositions}.
\subsection{Conditional free product of graphs}

Let $(G_1,e_1)$ and $(G_2,e_2)$ be two rooted graphs.
Let $(H_1,f_1)$ and $(H_2,f_2)$ be two additional rooted graphs. We now come to a definition of graph product original to this work, to the best of our knowledge. We define \emph{free product of} $H_1$ \emph{and} $H_2$ \emph{conditionally to} $G_1$ \emph{and} $G_2$ by 
\begin{equation}
\label{eqn:conditionproductsubordinationbranches}
H_1\prescript{}{G_1}{*}^{}_{G_2}H_2 := (H_1\vdash  \Gamma_2)\star (H_2 \vdash  \Gamma_1),
\end{equation}
where  $\Gamma_1$ and $\Gamma_2$ are the subordination branches associated to $G_1$ and $G_2$, as defined in the previous section.

To build the conditional product we first take the star product of the two graphs $H_1$ and $H_2$. In a second time, to every vertex of $H_1 \subset H_1 \star H_2$ but the root, we attach the subordination branch $\Gamma_{2}$ which 'starts' with the graph $G_2$. We repeat this process with $H_2$ and the subordination branch $\Gamma_1$.

\begin{figure}[!ht]
	\centering
	\includesvg[width=0.85\textwidth]{bunbdeux}
	\caption{\label{fig:conditionallyfreeproductgraph}The graphs $H_1\prescript{}{G_1}{*}^{}_{G_2}H_2$ and $G_1*G_2$ together with their branches $\Gamma_1$ and $\Gamma_2$. }

\end{figure}
Denoting by $\mu_1$, $\mu_2$, $\nu_1$ and $\nu_2$ the spectral distributions in the root state of $G_1$, $G_2$, $H_1$ and $H_2$ respectively, the spectral distribution in the root state of $H_1\prescript{}{G_1}{*}^{}_{G_2}H_2$ is given by
$\nu_1\prescript{}{\mu_1}{\boxplus}^{}_{\mu_2}\nu_2$, the additive free convolution of $\nu_1$ and $\nu_2$ conditionally to $\mu_1$ and $\mu_2$.
Furthermore, the adjacency matrix $A$ of $H_1\prescript{}{G_1}{*}^{}_{G_2}H_2$ can be written as:
$$A=A^{(1)}+A^{(2)},$$
with $A^{(1)}$ and $A^{(2)}$ having respective distribution $\nu_1$ and $\nu_2$ in the root state. If we denote by $$\Varphi:\mathbb{C}\langle X_1,X_2\rangle\to \mathbb{C}$$ the joint distribution of $(B^{(1)},B^{(2)})$ in the root state, the variables $X_1$ and $X_2$ are conditionally free with respect to $(\Psi,\Varphi)$. The matrix $A^{(1)}$ is the adjacency matrix of the graph obtained by retaining from the graph $H_{1} \prescript{}{G_1}{*}^{}_{G_2} H_2$ all vertices and adjacent edges in copies of the two graphs $H_1$ and $G_1$. The matrix $A^{(2)}$ is obtained similarly, by retaining the edges corresponding to copies of the graphs $H_2$ and $G_2$.

In preparation for the forthcoming section, let us develop the argument that entails conditional freeness of $X_1$ and $X_2$ with respect to the pair of states $(\Psi,\Varphi)$. Let $\bm{p} = (p_1,\ldots,p_n)$ be an alternating sequence of polynomials in $\mathbb{C}\langle X_1,X_2\rangle$ and suppose that $p_n\in\mathbb{C}\langle X_1\rangle$ (so that $p_{n-1} \in \mathbb{C}\langle X_2\rangle$ and so on). We assume that $\Psi(p_i)=0,$ and want to show
$$
	\Varphi(p_1\cdots p_n) = \langle e, p_1(B^{(x)})\cdots p_n(B^{(1)})e \rangle = \langle e, p_1(B^{(x)})e\rangle\cdots\langle e, p_n(B^{(1)})e \rangle.
$$
We may write $\Varphi(p_1\cdots p_n)$ as a weighted sum over paths drawn in the graph $H_1\prescript{}{G_1}{*}^{}_{G_2}H_2$
$$
	\Varphi(p_1(B^{(x)})\cdots p_n(B^{(1)}))=\sum_{\substack{\gamma:e\to e}} a_{\gamma}(\bm{p},B^{(1)},B^{(2)})
$$

When the path $\gamma$, starting at the root $e$ hits a vertex of $H^{1}$ that is not the root, it continues exploring vertices of the subordination branch $B^{(1)}$ attached to $v$. Besides, in order for $\gamma$ to loop at $e$, while passing through $v\in H_1\backslash\{e\}$, it must pass a second time through $v$. By denoting $A_{G_1}$ the adjacency matrix of the subordination branch, and writing it as
$$
	A_{G_1} = A^{(1)}_{G_1} + A^{(2)}_{G_1}
$$
where as usual, we retain in each $A^{(i)}_{G_1}$, $i\in \{1,2\}$ only edges in copies of $G_i$. Finally, since a path that joins a vertex of $H_1$ must go through a copy of $G_2$, one has the factorization:
\begin{align*}
	 & \langle e, p_1(B^{(x)})\cdots p_n(B^{(1)})e \rangle = \sum_{\substack{\gamma \subset \mathcal{B}_1(v) \\ \gamma: v \to v}} a_{\gamma}({( p_2,\ldots,p_n)}, A^{(1)}_{G_1}, A_{G_1}^{(2)})\sum_{\substack{\gamma_2 \subset H_1 \\ \gamma_2 : v \to e}}a_{\gamma_2}(p_1,A_{H_1},A_{H_2}) \\
	 & \hspace{5cm}+ \langle e, p_1(B^{(x)})\cdots p_{n-1}(B^{(2)})e\rangle \langle e,p_n(B^{(1)} e\rangle   \\
	 & = 0 + \langle e, p_1(B^{(x)})\cdots p_{n-1}(B^{(2)})e\rangle \langle e,p_n(B^{(1)} e\rangle.
\end{align*}
The polynomials $p_i \in \mathbb{C}\langle X_1,X_2\rangle$ are alternating and satisfy $\Psi(p_i)=0$. The last equality follows from freeness. We conclude by induction.
\begin{remark} Let us comment on the connection of the following relation between the $F$ transforms, the subordination series, of elements, conditionally free elements $a$ and $b$ and their sums 
\begin{equation}
\label{rk:relation}
(W_a + W_b  - z)  + (F_{a+b}^{\Varphi}(z) - z) = (F_a^{\Varphi}(W_a(z))+F_b^{\Varphi}(W_b(z)) - z),
\end{equation}
in terms of our graph product. If $a$ and $b$ are distributed as the pairs of graphs $(G_1,H_1)$ and $(G_2,H_2)$ in their root states, the subordination series $W_a$, $W_b$ are the $F$ transforms of the subordination branches $\Gamma_1$ and $\Gamma_2$, while $F_a^{\Varphi}(W_a(z))$ (resp. $F_b^{\Varphi}(W_b(z))$) is the $F$ transform of the monotone product $H_1 \vartriangleleft G_1$ (resp. $F_b^{\Varphi}(W_b(z))$). The right-hand side of \eqref{rk:relation} is equal to the $F$ transform of the boolean product 
$$
K:=(H_1 \vartriangleright \Gamma_1) \star (H_2 \vartriangleright \Gamma_2) 
$$
and the relation \eqref{rk:relation} is equivalent to the decomposition of the graph $K$ as a boolean product between the conditional free product of $(G_1,H_1)$ and $(G_2),H_2)$, and the free product of the graph $G$'s. This can be understood as follows: comparing the relation defining $K$ with \eqref{eqn:conditionproductsubordinationbranches}, the effect of taking the monotone convolution, instead of the subordination product, is to "graft" an additional copy of the subordination branches to the roots. In computing the boolean product defining $K$, these subordination branches attached to the roots are grafted by their roots, resulting in the free product of the graph $G$'s being attached to the root of the conditional free product of $(G_1,H_1)$ and $(G_2,H_2)$.
\end{remark}
\subsection{Free, star and comb products of graphs from the conditional free product of graphs}
We know that free, Boolean and monotone independences are particular cases of conditional freeness. From the graph product side, it is also possible to recover the free product $*$, the star product $\star$, and the comb product $\rhd$ from the graph product $H_1\prescript{}{G_1}{*}^{}_{G_2}H_2$.

First from
$H_1\prescript{}{G_1}{*}^{}_{G_2}H_2 \simeq (H_1\vdash \Gamma_2)\star (H_2 \vdash \Gamma_1),$
if $G_1=H_1$ and $H_2=G_2$, then  $(H_1\vdash \Gamma_2)$ and $(H_1\vdash \Gamma_2)$, are just the branches of the free product, i.e. $(H_1\vdash \Gamma_2)=\Gamma_1$ and $(H_1\vdash \Gamma_2)=\Gamma_1$, and hence
$$H_1\prescript{}{H_1}{*}^{}_{H_2}H_2\simeq H_1* H_2.$$


For the star and comb product, we note that there is an identity with respect to both of this products, namely $G_\emptyset:=(\{e_1,\emptyset\},e_1)$, i.e. , the rooted graph with one vertex $e_1$, no edges and distinguished vertex  $e_1$, satisfying the rules
$$(G_1,e_1)\star G_\emptyset\simeq G_\emptyset \star (G_1,e_1) \simeq (G_1,e_1),$$
$$(G_1,e_1)*G_\emptyset \simeq G_\emptyset * (G_1,e_1) \simeq (G_1,e_1).$$

We still have
$$G_1*G_2\simeq \Gamma_1\star  \Gamma_2$$
if we consider that $(G_1,e_1)$ is the branch of $(G_1,e_1)\star G_\emptyset\simeq G_\emptyset \star (G_1,e_1)$ subordinate to $(G_1,e_1)$ and that $G_\emptyset$ is the branch of $(G_1,e_1)\star G_\emptyset\simeq G_\emptyset \star (G_1,e_1)$ subordinate to $G_\emptyset$. We also define
$$(G_1,e_1)\vdash G_\emptyset\simeq (G_1,e_1)$$
and
$$G_\emptyset  \vdash (G_1,e_1)\simeq G_\emptyset.$$
With these conventions, it is clear that
$$ H_1\prescript{}{G_\emptyset}{*}^{}_{G_\emptyset}H_2\simeq H_1\star H_2\ \ \text{and}\ \ H_1\prescript{}{G_\emptyset}{*}^{}_{H_2}H_2\simeq H_1\rhd H_2.$$

\subsection{Cyclic conditional freeness and graph products}
We assume here that $H_1$ and $H_2$ are finite.
Let us denote by $W$ the finite set of vertices of $H_1\prescript{}{G_1}{*}^{}_{G_2}H_2$ which belongs to $H_1$ or $H_2$ (the cardinality of $W$ is the sum of the number of vertices of  $H_1$ and $H_2$ minus one, because of the identification of the roots). Recall the definition of $A^{(1)}$ and $A^{(2)}$ of the last section. We define the linear functional $$\OmegA:\mathbb{C}\langle X_1,X_2\rangle\to \mathbb{C}$$ by the sum of vector states given by the vertices in $W$, i.e.
$$\OmegA(P)=\sum_{v\in W}\Big\langle P(A^{(1)},A^{(2)}) \delta(v),\delta(v) \Big\rangle.$$
In addition, for each $i\in \{1,2\}$, we denote by $\OmegA_{H_i}\colon \mathbb{C}\langle X_i \rangle \to \mathbb{C}$ the distribution of $H_i$ with respect to the non-normalized trace:
$$
	\OmegA_{H_i}(P) =  {\rm Tr}(P(A_{H_i})), \quad P\in\mathbb{C}\langle X_i\rangle.
$$

\begin{theorem} We let $\mathcal{I}$ be the two sided ideal of $\mathbb{C}\langle X_1,X_2\rangle$ generated by $X_1,X_2$ of polynomials with non-constant coefficient.
	For each integer $i \in \{1,2\}$ and for any polynomial $P(X_i) \in \mathcal{I}$ with non-constant coefficient, $\OmegA_{H_i}(P)=\OmegA(P)$.
	Moreover, $\OmegA$ is tracial and the variables $X_1$ and $X_2$ are cyclic conditionally free with respect to $(\Psi,\Varphi,\OmegA)$.
\end{theorem}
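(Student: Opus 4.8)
# Proof Proposal

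The plan is to establish the three assertions in order: first the identity $\OmegA_{H_i}(P) = \OmegA(P)$ on the ideal $\mathcal{I}$, then traciality of $\OmegA$, and finally the cyclic-conditional freeness. The strategy throughout is to exploit the graph-theoretic structure of $H_1\prescript{}{G_1}{*}^{}_{G_2}H_2 = (H_1\vdash \Gamma_2)\star(H_2 \vdash \Gamma_1)$, reading each functional as a weighted sum over closed paths, exactly as in the argument preceding this section establishing conditional freeness of $X_1,X_2$ with respect to $(\Psi,\Varphi)$.

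\textbf{The identity on $\mathcal{I}$.} First I would analyze $\OmegA(P)$ for a polynomial $P(X_i)$ in a single variable, say $i=1$, with no constant term. Since $\OmegA(P) = \sum_{v\in W}\langle P(A^{(1)},A^{(2)})\delta(v),\delta(v)\rangle$, and $P$ involves only $X_1$, the operator $P(A^{(1)})$ moves a basis vector $\delta(v)$ only along edges belonging to copies of $G_1$ and $H_1$. The key observation is that a closed path based at a vertex $v\in W$ using only $A^{(1)}$-edges cannot leave the copy of $H_1$ containing $v$ through a subordination branch $\Gamma_2$ (which is attached via $G_2$-edges, hence carried by $A^{(2)}$). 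Consequently the sum over $v\in W$ collapses: only the vertices lying in $H_1$ contribute, and on those vertices $P(A^{(1)})$ acts exactly as $P(A_{H_1})$ acts on $H_1$. This yields $\OmegA(P) = \mathrm{Tr}(P(A_{H_1})) = \OmegA_{H_i}(P)$. The vanishing of the constant coefficient guarantees no contribution from the identity operator, which would otherwise detect all of $W$.

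\textbf{Traciality.} To prove $\OmegA(PQ)=\OmegA(QP)$ for $P,Q\in\mathbb{C}\langle X_1,X_2\rangle$, I would write $\OmegA$ as the non-normalized trace of the compression to the finite-dimensional subspace spanned by $\{\delta(v):v\in W\}$. Since $W$ is finite, $\OmegA(P) = \mathrm{Tr}(E P(A^{(1)},A^{(2)}) E)$ where $E$ is the orthogonal projection onto $\mathrm{span}\{\delta(v):v\in W\}$. The difficulty is that $EPQE \neq (EPE)(EQE)$ in general, so traciality is not immediate from the trace property of $\mathrm{Tr}$. The resolution is again graph-theoretic: a closed path based in $W$ that leaves $W$ into one of the subordination branches must return to the \emph{same} vertex of $W$ before it can move elsewhere, because the branches $\Gamma_1,\Gamma_2$ are trees hanging off individual vertices of $H_1,H_2$. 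This tree-like ``excursion'' structure forces $E A^{(1)}$ and $E A^{(2)}$ to interact in a way that makes the excursions commute with the trace, so that the full closed-path count is invariant under cyclic rotation of $P$ and $Q$. Making this precise — showing that the branch excursions can be summed out into effective diagonal weights at each $W$-vertex, after which the remaining finite adjacency operator on $W$ is genuinely symmetric and its trace is cyclic — is what I expect to be the main obstacle.

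\textbf{Cyclic-conditional freeness.} Given conditional freeness of $X_1,X_2$ with respect to $(\Psi,\Varphi)$ (already established) and traciality of $\OmegA$, it remains to verify condition \eqref{eqn:cdnw}: for a cyclically alternating $\Psi$-centred sequence $(a_1,\ldots,a_n)$ one has $\OmegA(a_1\cdots a_n)=\Varphi(a_1)\cdots\Varphi(a_n)$. I would again expand $\OmegA(a_1\cdots a_n)=\sum_{v\in W}\langle a_1\cdots a_n \delta(v),\delta(v)\rangle$ as a sum over closed paths based at the $W$-vertices. Because the word is cyclically alternating and each $a_j$ is $\Psi$-centred, the path analysis from the conditional-freeness argument applies to each cyclic rotation; the centredness kills every contribution in which a block returns to its starting branch, precisely as in the proof of Lemma~\ref{lemma:vanishingofbeta} and Theorem~\ref{th:fromfreetocyclic}. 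What survives is exactly the fully factorized term $\Varphi(a_1)\cdots\Varphi(a_n)$, matching the defining relation of cyclic-conditional freeness in Definition~\ref{def:cycfreeness}. Since both hypotheses of that definition then hold, $X_1$ and $X_2$ are cyclic-conditionally free with respect to $(\Psi,\Varphi,\OmegA)$.
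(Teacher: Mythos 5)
There is a genuine gap in the traciality step, and you have in effect flagged it yourself. You treat traciality as a separate assertion to be proved directly from the graph geometry, and you write that making the excursion-summation argument precise ``is what I expect to be the main obstacle.'' As written, that step is a strategy sketch, not a proof: summing out the branch excursions into effective diagonal weights does not obviously produce an operator on $\ell^2(W)$ whose trace computes $\OmegA(PQ)$ for \emph{mixed} words in $X_1$ and $X_2$, because the compression $E\,P(A^{(1)},A^{(2)})\,E$ is not multiplicative and the excursions attached to different letters of the word interleave. You never close this.

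The paper avoids the issue entirely by proving a single quantitative claim that delivers traciality and cyclic-conditional freeness simultaneously: for \emph{every} alternating tuple $(p_1,\ldots,p_n)$ of $\Psi$-centred polynomials, $\OmegA(p_n\cdots p_1)$ equals $\Varphi(p_n)\cdots\Varphi(p_1)$ when the tuple is cyclically alternating, and equals $\Varphi(p_1p_n)\,\Varphi(p_{n-1})\cdots\Varphi(p_2)$ when it is not. Both right-hand sides are manifestly invariant under cyclic rotation of the word, so traciality on the algebra generated by $X_1,X_2$ follows from the formula (together with traciality on each single-variable algebra, which comes from the first part). The computation itself is done with projections: centredness with respect to $\Psi$ forbids escaping $\ell^2(W)$ and returning, so one may insert $P_W$ between consecutive letters; centredness then upgrades $P_WB_jP_W$ to $P_{V_{i_j}}B_jP_{V_{i_j}}$, and the identity $P_{V_1}P_{V_2}=P_e$ factorizes the trace. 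Your third paragraph only treats the cyclically alternating case, which suffices for condition \eqref{eqn:cdnw} but not for traciality; the non-cyclically-alternating case, where the answer is $\Varphi(p_1p_n)\Varphi(p_{n-1})\cdots\Varphi(p_2)$ rather than a full factorization, is exactly the piece you are missing and is what closes the traciality argument. Your first paragraph (the identity $\OmegA_{H_i}(P)=\OmegA(P)$ on $\mathcal{I}$) matches the paper's argument and is fine.
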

\begin{proof}Let us denote by $V_1$ the set of vertices of $H_1$ in $H_1\prescript{}{G_1}{*}^{}_{G_2}H_2$ and by $e$ the root of $H_1\prescript{}{G_1}{*}^{}_{G_2}H_2$. The operator $B^{(1)}$ leaves invariant the subspace $\ell^2(V_1)$ and vanishes on $\ell^2(W\setminus V_2)$  so $$\OmegA(P(X_1))=\sum_{v\in W}\Big\langle P(B^{(1)}) \delta(v),\delta(v) \Big\rangle=\sum_{v\in V_1}\Big\langle P(B^{(1)}) \delta(v),\delta(v) \Big\rangle=Tr_{\ell^2(V_1)}\Big(P(B^{(1)}|_{\ell^2(V_1)})\Big)$$
	as announced. Similarly, the distribution of $X_2$ with respect to $\OmegA$ is the spectral distribution of $H_2$ with respect to the non-normalized trace. As a consequence, $\OmegA$ is tracial on the algebra generated by $X_1$ (respectively on the algebra generated by $X_2$).

	Now, we claim that, for any alternating tuple $\bm{p}= (p_1,\ldots,p_n)$ of polynomials with $n\geq 2$, satisfying $\Psi(p_1)=\cdots = \Psi(p_n)=0$, we have
	$$
		\OmegA(p_n \cdots p_1)  = \Varphi(p_n) \cdots \Varphi(p_1)$$if $\bm{p}$ is cyclically alternating and, if not,
	$$\OmegA(p_n \cdots p_1)  = \Varphi(p_1p_n) \Varphi(p_{n-1})\cdots \Varphi(p_2).$$

	This claim implies the traciality of $\OmegA$ and the cyclic conditional-freeness.

	In order to prove the claim, we denote by $P_W$ the orthogonal projection to $\ell^2(W)$, by $P_{V_1}$ the orthogonal projection to $\ell^2(V_1)$ and by $P_v$ the orthogonal projection to any vertex $\ell^2(\{v\})$. Let us assume that $p_1\in \mathbb{C}\langle X_1\rangle$ (the case $p_1\in \mathbb{C}\langle X_2\rangle$ is treated similarly). We set
	$$B_1:=p_1(B^{(1)}),\ B_2:=p_2(B^{(2)}),\ B_3:=p_3(B^{(1)}),\ B_4:=p_4(B^{(2)}),\, \ldots $$
	Because of the centering of $p_1,\ldots,p_n$ with respect to $\Psi$, it is impossible to escape from $\ell^2(W)$ and come back to $\ell^2(W)$ by an alternating tuple of operators. Consequently, we have
	\begin{align*}
		\OmegA(p_n \cdots p_1) & =\sum_{v\in W}\Big\langle B_n\cdots B_2 B_1 \delta(v),\delta(v) \Big\rangle           \\
		                       & =\sum_{v\in W}\Big\langle B_nP_W \cdots P_WB_2 P_W B_1\delta(v),\delta(v) \Big\rangle \\
		                       & =\sum_{v\in W}Tr(B_nP_W \cdots P_WB_2 P_W B_1P_v)                                     \\
		                       & = {\rm Tr}(B_nP_W \cdots P_WB_2 P_W B_1P_W)                                           \\
	\end{align*}
	Because $B_1,\ldots,B_n$ are centered, $P_W B_1P_W=P_{V_1}B_1P_{V_1}$, $P_WB_2P_{W}=P_{V_2} B_2P_{V_2}$, $P_WB_3P_W=P_{V_1} B_3P_{V_1}$ and so on. As a consequence, and using $P_{V_1}P_{V_2}=P_e$, we get
	\begin{align*}
		\OmegA(p_n \cdots p_1)
		 & ={\rm Tr}(B_nP_W \cdots P_WB_2 P_W B_1P_W)                             \\
		 & ={\rm Tr}(P_{V_2}B_n P_e \cdots P_eB_2 P_e B_1P_{V_1})                 \\
		 & =\Varphi(p_{n-1})\cdots \Varphi(p_2) {\rm Tr}(P_{V_2}B_nP_eB_1P_{V_1}) \\
		 & =\Varphi(p_{n-1})\cdots \Varphi(p_2){\rm Tr}(P_eB_nP_eB_1)             \\
		 & =\Varphi(p_{n})\cdots \Varphi(p_1)
	\end{align*}
	if $\bm{p}$ is cyclically alternating, and
	\begin{align*}
		\OmegA(p_n \cdots p_1)
		 & ={\rm Tr}(B_nP_W \cdots P_WB_2 P_W B_1P_W)                             \\
		 & ={\rm Tr}(P_{V_1}B_n P_e \cdots P_eB_2 P_e B_1P_{V_1})                 \\
		 & =\Varphi(p_{n-1})\cdots \Varphi(p_2) {\rm Tr}(P_{V_1}B_nP_eB_1P_{V_1}) \\
		 & =\Varphi(p_{n-1})\cdots \Varphi(p_2){\rm Tr}(B_1P_{V_1}B_nP_e)         \\
		 & =\Varphi(p_{n-1})\cdots \Varphi(p_2)\Varphi(p_1p_n)
	\end{align*}
	if not.
\end{proof}
\begin{remark}
	Note that, from the definition of the conditional free product of graphs, $\OmegA(1)=|V(H_1)|+|V(H_2)|-1$, whereas $\OmegA_{H_i}(1)=|V(H_i)|$.
	In the following particular cases:
	$$ H_1\prescript{}{\emptyset}{*}^{}_{\emptyset}H_2\simeq H_1\star H_2\ \ \text{and}\ \ H_1\prescript{}{\emptyset}{*}^{}_{H_2}H_2\simeq H_1\rhd H_2,$$
	we recover cyclic Boolean  independence  and cyclic monotone independence between $A^{(1)}$ and $A^{(2)}$. In the case $$H_1\prescript{}{H_1}{*}^{}_{H_2}H_2\simeq H_1* H_2,$$
	we obtain cyclic freeness (and not infinitesimal freeness, since the root state is not necessarily tracial).
\end{remark}

\bibliographystyle{alpha}
\bibliography{main}

\begin{thebibliography}{DFGZJ95}

\bibitem[AHL23]{arizmendi2022cyclic}
Octavio Arizmendi, Takahiro Hasebe, and Franz Lehner.
\newblock Cyclic independence: Boolean and monotone.
\newblock {\em Algebraic Combinatorics}, 6(6):1697--1734, 2023.

\bibitem[AHLV15]{arizmendi2015relations}
Octavio Arizmendi, Takahiro Hasebe, Franz Lehner, and Carlos Vargas.
\newblock Relations between cumulants in noncommutative probability.
\newblock {\em Advances in Mathematics}, 282:56--92, 2015.

\bibitem[ALS07]{accardi2007decompositions}
Luigi Accardi, Romuald Lenczewski, and Rafa{\l} Sa{\l}apata.
\newblock Decompositions of the free product of graphs.
\newblock {\em Infinite Dimensional Analysis, Quantum Probability and Related
  Topics}, 10(03):303--334, 2007.

\bibitem[BGN03]{biane2003non}
Philippe Biane, Frederick Goodman, and Alexandru Nica.
\newblock Non-crossing cumulants of type {B}.
\newblock {\em Transactions of the American Mathematical Society},
  355(6):2263--2303, 2003.

\bibitem[BH24]{bernard2024structured}
Denis Bernard and Ludwig Hruza.
\newblock Structured random matrices and cyclic cumulants: A free probability
  approach.
\newblock {\em Random Matrices: Theory and Applications}, 13(03):2450014, 2024.

\bibitem[BLS96]{bozejko1996convolution}
Marek Bo{\.z}ejko, Michael Leinert, and Roland Speicher.
\newblock Convolution and limit theorems for conditionally free random
  variables.
\newblock {\em Pacific Journal of Mathematics}, 175(2):357--388, 1996.

\bibitem[BS12]{belinschi2012free}
Serban~T Belinschi and Dimitri Shlyakhtenko.
\newblock Free probability of type b: analytic interpretation and applications.
\newblock {\em American Journal of Mathematics}, 134(1):193--234, 2012.

\bibitem[BW98]{bozejko1998new}
Marek Bo{\.z}ejko and Janusz Wysocza{\'n}ski.
\newblock New examples of convolutions and non-commutative central limit
  theorems.
\newblock {\em Banach Center Publications}, 43(1):95--103, 1998.

\bibitem[CDG24]{cebron2022freeness}
Guillaume Cebron, Antoine Dahlqvist, and Frank Gabriel.
\newblock Freeness of type {B} and conditional freeness for random matrices.
\newblock {\em Indiana Univ. Math. J.}, 73:1207--1252, 2024.

\bibitem[CG24]{cebron2022asymptotic}
Guillaume C{\'e}bron and Nicolas Gilliers.
\newblock Asymptotic cyclic-conditional freeness of random matrices.
\newblock {\em Random Matrices: Theory and Applications}, 13:2350014, 2024.

\bibitem[CHS18]{collins2018free}
Benoit Collins, Takahiro Hasebe, and Noriyoshi Sakuma.
\newblock Free probability for purely discrete eigenvalues of random matrices.
\newblock {\em Journal of the Mathematical Society of Japan}, 70(3):1111--1150,
  2018.

\bibitem[CS11]{curran2011asymptotic}
Stephen Curran and Roland Speicher.
\newblock Asymptotic infinitesimal freeness with amalgamation for haar quantum
  unitary random matrices.
\newblock {\em Communications in Mathematical Physics}, 301(3):627--659, 2011.

\bibitem[DFGZJ95]{di19952d}
Philippe Di~Francesco, Paul Ginsparg, and Jean Zinn-Justin.
\newblock 2d gravity and random matrices.
\newblock {\em Physics Reports}, 254(1-2):1--133, 1995.

\bibitem[FH25]{fujie2023free}
Katsunori Fujie and Takahiro Hasebe.
\newblock Free probability of type {B} prime.
\newblock {\em Transactions of the American Mathematical Society}, 2025.

\bibitem[FKP25]{fava2023designs}
Michele Fava, Jorge Kurchan, and Silvia Pappalardi.
\newblock Designs via free probability.
\newblock {\em Physical Review X}, 15(1):011031, 2025.

\bibitem[FMNS19]{fevrier2019construction}
Maxime F{\'e}vrier, Mitja Mastnak, Alexandru Nica, and Kamil Szpojankowski.
\newblock A construction which relates c-freeness to infinitesimal freeness.
\newblock {\em Advances in Applied Mathematics}, 110:299--341, 2019.

\bibitem[FN10]{fevrier2010infinitesimal}
Maxime F{\'e}vrier and Alexandru Nica.
\newblock Infinitesimal non-crossing cumulants and free probability of type
  {B}.
\newblock {\em Journal of Functional Analysis}, 258(9):2983--3023, 2010.

\bibitem[Fra06]{franz2005multiplicative}
Uwe Franz.
\newblock Multiplicative monotone convolutions.
\newblock {\em Banach Center Publications}, 73(1):153--166, 2006.

\bibitem[GJ26]{gilliers2026bigraph}
Nicolas Gilliers and David Jekel.
\newblock Bigraph independence: a mixture of the five natural independences.
\newblock {\em arXiv preprint arXiv:2601.15215}, 2026.

\bibitem[HB23]{hruza2023coherent}
Ludwig Hruza and Denis Bernard.
\newblock Coherent fluctuations in noisy mesoscopic systems, the open quantum
  ssep, and free probability.
\newblock {\em Physical Review X}, 13(1):011045, 2023.

\bibitem[Ker97]{kerov1997interlacing}
Sergei Kerov.
\newblock Interlacing measures.
\newblock {\em Translations of the American Mathematical Society-Series 2},
  181:35--84, 1997.

\bibitem[Kre72]{kreweras1972partitions}
Germain Kreweras.
\newblock Sur les partitions non crois{\'e}es d'un cycle.
\newblock {\em Discrete mathematics}, 1(4):333--350, 1972.

\bibitem[Leh04]{lehner2004cumulants}
Franz Lehner.
\newblock Cumulants in noncommutative probability theory i. noncommutative
  exchangeability systems.
\newblock {\em Mathematische Zeitschrift}, 248(1):67--100, 2004.

\bibitem[LS21]{lehner2021Boolean}
Franz Lehner and Kamil Szpojankowski.
\newblock Boolean cumulants and subordination in free probability.
\newblock {\em Random Matrices: Theory and Applications}, 10(04):2150036, 2021.

\bibitem[Mur01]{muraki2001monotonic}
Naofumi Muraki.
\newblock Monotonic independence, monotonic central limit theorem and monotonic
  law of small numbers.
\newblock {\em Infinite Dimensional Analysis, Quantum Probability and Related
  Topics}, 4(01):39--58, 2001.

\bibitem[Mur03]{muraki2003thefive}
Naofumi Muraki.
\newblock The five independences as natural products.
\newblock {\em Infinite Dimensional Analysis, Quantum Probability and Related
  Topics}, 06(03):337--371, 2003.

\bibitem[NS06]{nica2006lectures}
Alexandru Nica and Roland Speicher.
\newblock {\em Lectures on the combinatorics of free probability}, volume~13.
\newblock Cambridge University Press, 2006.

\bibitem[PFK22]{pappalardi2022eigenstate}
Silvia Pappalardi, Laura Foini, and Jorge Kurchan.
\newblock Eigenstate thermalization hypothesis and free probability.
\newblock {\em Physical Review Letters}, 129(17):170603, 2022.

\bibitem[Pop08]{popa2008combinatorial}
Mihai Popa.
\newblock A combinatorial approach to monotonic independence over a
  $c^{\star}-$algebra.
\newblock {\em Pacific Journal of Mathematics}, 237(2):299--325, 2008.

\bibitem[Pop09]{popa2008new}
Mihai Popa.
\newblock A new proof for the multiplicative property of the boolean cumulants
  with applications to the operator-valued case.
\newblock In {\em Colloquium Mathematicum}, volume 117, pages 81--93. Instytut
  Matematyczny Polskiej Akademii Nauk, 2009.

\bibitem[Pop10]{popa2010freeness}
Mihai Popa.
\newblock Freeness with amalgamation, limit theorems and s-transform in
  non-commutative probability spaces of type {B}.
\newblock In {\em Colloquium Mathematicum}, volume 120, pages 319--329.
  Institute of Mathematics Polish Academy of Sciences, 2010.

\bibitem[PW11]{popa2011multiplicative}
Mihai Popa and Jiun-Chau Wang.
\newblock On multiplicative conditionally free convolution.
\newblock {\em Transactions of the American Mathematical Society},
  363(12):6309--6335, 2011.

\bibitem[Shl18]{shlyakhtenko2018free}
Dimitri Shlyakhtenko.
\newblock Free probability of type-b and asymptotics of finite-rank
  perturbations of random matrices.
\newblock {\em Indiana University Mathematics Journal}, 67(2):971--991, 2018.

\bibitem[Spe14]{speicher2014free}
Roland Speicher.
\newblock Free probability and random matrices.
\newblock {\em arXiv preprint arXiv:1404.3393}, 2014.

\bibitem[Spe20]{speicher2020lecture}
Roland Speicher.
\newblock Lecture notes on" non-commutative distributions".
\newblock {\em arXiv preprint arXiv:2009.03589}, 2020.

\bibitem[SW93]{speicher1993boolean}
Roland Speicher and Reza Woroudi.
\newblock Boolean convolution.
\newblock In {\em of fields inst. comm.} Citeseer, 1993.

\bibitem[Tse22]{tseng2022operator}
Pei-Lun Tseng.
\newblock Operator-valued infinitesimal multiplicative convolutions.
\newblock {\em arXiv preprint arXiv:2201.13187}, 2022.

\bibitem[Voi86]{voiculescu1986addition}
Dan Voiculescu.
\newblock Addition of certain non-commuting random variables.
\newblock {\em Journal of functional analysis}, 66(3):323--346, 1986.

\bibitem[Voi87]{voiculescu1987multiplication}
Dan Voiculescu.
\newblock Multiplication of certain non-commuting random variables.
\newblock {\em Journal of Operator Theory}, pages 223--235, 1987.

\bibitem[Voi91]{voiculescu1991limit}
Dan Voiculescu.
\newblock Limit laws for random matrices and free products.
\newblock {\em Inventiones mathematicae}, 104(1):201--220, 1991.

\bibitem[Wan23]{Wang_2023}
Jinzhao Wang.
\newblock Beyond islands: a free probabilistic approach.
\newblock {\em Journal of High Energy Physics}, 2023(10), October 2023.

\bibitem[Wig55]{zbMATH03113773}
Eugene~P. Wigner.
\newblock Characteristic vectors of bordered matrices with infinite dimensions.
\newblock {\em Ann. Math. (2)}, 62:548--564, 1955.

\end{thebibliography}
\end{document}